\newtheorem{thm}{Theorem}[section]
\newtheorem{prop}[thm]{Proposition}
\newtheorem{lem}[thm]{Lemma}
\newtheorem{defin}[thm]{Definition}
\theoremstyle{definition}
\theoremstyle{remark}
\numberwithin{equation}{section}
\newcommand{\mc}{\mathcal}
\renewcommand{\leq}{\leqslant}
\renewcommand{\geq}{\geqslant}
\DeclareMathOperator{\dist}{dist}
\renewcommand{\div}{\operatorname{div}}
\begin{document}

\title{Stabilization of a rigid body moving in a compressible viscous fluid}

\author{Arnab Roy and Tak\'eo Takahashi}
\address{Universit\'e de Lorraine, CNRS, Inria, IECL, F-54000 Nancy, France}

\date{\today}
 
\maketitle

\begin{abstract}
We consider the stabilizability of a fluid-structure interaction system where the fluid is viscous and compressible and the structure is a rigid ball.
The feedback control of the system acts on the ball and corresponds to a force that would be produced by a spring and a damper connecting the center of the ball to a fixed point $h_1$. We prove the global-in-time existence of strong solutions for the corresponding system under a smallness condition on the initial velocities and 
on the distance between the initial position of the center of the ball and $h_1$. 
Then, we show with our feedback law, that the fluid and the structure velocities go to 0 and that the center of the ball goes to $h_1$
as $t\to \infty$.
\end{abstract}

\bigskip

 \noindent{\bf Keywords.}  Fluid-structure interaction, compressible Navier-Stokes system, global solutions, stabilization \\
 \noindent {\bf AMS subject classifications.} 35Q35, 35D30, 35D35, 35R37, 76N10, 93D15, 93D20.

\tableofcontents

\section{Introduction and main result}
 Let $\Omega \subset \mathbb{R}^3$ be a bounded domain with $C^4$ boundary occupied by a fluid and a rigid body. We denote by $\mc{B}(t) \subset \Omega$, the domain of the rigid body and we assume it is an open ball of radius $1$ and of center $h(t)$, where $t \in \mathbb{R}_{+}$ is the time variable.
We suppose that the fluid domain $\mc{F}(t) = \Omega \setminus \overline{\mc{B}(t)}$ is connected.

The fluid is modeled by the compressible Navier-Stokes system whereas the motion of the rigid body is governed by the balance equations for linear and angular momentum. We also assume the no-slip boundary conditions. The equations of motion of fluid-structure are:
\begin{equation}\label{continuity}
\frac{\partial \rho}{\partial t} + \operatorname{div}(\rho u)=0 \quad t > 0, \, x\in\mc{F}(t),
\end{equation}
\begin{equation}\label{momentum}
\rho\left(\frac{\partial u}{\partial t}+ \left(u\cdot \nabla\right)u\right)- \div \sigma(u,p)=0 \quad  t > 0, \, x\in\mc{F}(t),
\end{equation}
\begin{align}
m\ell'&= -\int\limits_{\partial \mc{B}(t)} \sigma(u,p)N\, d\Gamma + w \quad t \geq 0,
\label{linear:body}
\\ 
J\omega'&=-\int\limits_{\partial \mc{B}(t)} (x-h(t)) \times \sigma(u,p)N\, d\Gamma\quad t \geq 0,
\label{angular:body} \\
h'&=\ell\quad t \geq 0,
\end{align}
\begin{align}
u(t,x)&=0 \quad  t>0, \ x \, \in \partial \Omega, \label{boundary f}\\
u(t,x)&= \ell(t)+\omega(t) \times (x-h(t))  \quad t>0, \  x \, \in \partial \mc{B}(t), \label{boundary fs}
\end{align}
\begin{align}
\rho(0,\cdot)=\rho_0, \quad u(0,\cdot)=u_0 \quad \mbox{in}\  \mc{F}(0),\\
h(0)=h_0,\quad \ell(0)=\ell_0,\quad \omega(0)=\omega_0. \label{initial}
\end{align}
In the above equations, $\rho=\rho(t,x)$ and $u=u(t,x)$ represent respectively the density and the velocity of the fluid and the pressure of the fluid is denoted by $p$. We assume that the flow is in the barotropic regime and we focus on the isentropic case where the relation between $p$ and $\rho$ is given by the constitutive law:
\begin{equation*}
p= a\rho^{\gamma},
\end{equation*} with $a>0$ and the adiabatic constant $\gamma > \frac{3}{2}$. The Cauchy stress tensor is defined as: 
\begin{equation*}
\sigma(u,p)=2\mu \mathbb{D}(u) + \lambda \operatorname{div}u\mathbb{I}_3 -p\mathbb{I}_3,
\end{equation*}
where $\mathbb{D}(u)=\frac{1}{2}\left(\nabla u + \nabla u^{\top}\right)$ denotes the symmetric part of the velocity gradient ($\nabla u^{\top}$ is the transpose of the matrix $\nabla u$) and $\lambda,\mu$ are the viscosity coefficients satisfying 
\begin{equation*}
\mu > 0, \quad \lambda + \mu \geq 0.
\end{equation*}
Here $\ell$ and $\omega$ are the linear and angular velocities of the rigid body, $N(t,x)$ is the unit normal to $\partial \mc{B}(t)$ at the point $x \in \partial \mc{B}(t)$, directed to the interior of the ball and $m$, $J$ are the mass and the moment of inertia of the rigid ball respectively. The formulae for $m$ and $J$ are
$$
m = \frac{4}{3}\pi\rho_{\mc{B}}, \quad J = \frac{2m}{5}\mathbb{I}_3,
$$
where $\rho_{\mc{B}} > 0$ is the constant density of the rigid ball.

Finally, $w$ (in \eqref{linear:body}) is our control that we take as a feedback control:
\begin{equation}\label{feedback}
w(t)=k_{p}(t)(h_1-h(t))-k_d\ell(t),
\end{equation}
where $k_d \geq 0$ and $k_p(t)\geq 0$ are well-chosen so that 
\begin{equation*}
\lim_{t\rightarrow \infty}h(t)= h_1,
\end{equation*}
whereas the velocities of the fluid and of the rigid ball go to $0$:
\begin{equation*}
\lim_{t\rightarrow \infty}u(t)= 0,\quad \lim_{t\rightarrow \infty}\ell(t)= 0,\quad \lim_{t\rightarrow \infty}\omega(t)= 0.
\end{equation*}
In literature, this type of control is known as Proportional-Derivative (PD) controller generated by a spring and a damper. 
The spring-damper is connected from the center of the ball to the fixed anchor point $h_1$ and it is attracting the ball towards the point $h_1$.

In order to give the precise statement of stabilization (\cref{asymptotic behavior}), we first need a global in time existence result for \eqref{continuity}--\eqref{feedback} with \eqref{feedback}. Such a result in the case without control is given in \cite{Muriel-Sergio-IHP} by adapting a method introduced in \cite{MR713680}.

 Here we will prove again this existence result, with the same approach but with a special attention to the estimates on $h(t)$ and with some modifications in the proof of \cite{Muriel-Sergio-IHP} due to the feedback law \eqref{feedback}.

In order to state our result we introduce $\overline{\rho}$ the mean-value of $\rho_{0}$:
\begin{equation}\label{mean value}
\overline{\rho}=\frac{1}{|\mc{F}(0)|}\int\limits_{\mc{F}(0)} \rho_{0}(x) \, dx.
\end{equation}
Note that, from equation \eqref{continuity} and Reynold's Transport Theorem, we obtain
\begin{equation*}
 \int\limits_{\mc{F}(0)} \rho_{0}(x) \, dx = \int\limits_{\mc{F}(t)} \rho(t,x) \, dx.
 \end{equation*}
For $0\leq T_1 < T_2 \leq \infty$, we introduce the following space: 
\begin{equation}
\begin{gathered}
{\widehat{\mc{S}}_{T_1,T_2}}=\Big\{(\rho,u,\ell,\omega) \mid 
\rho \in L^2 (T_1,T_2; H^3(\mc{F}(t)))\cap BC^0([T_1,T_2]; H^3(\mc{F}(t))) \cap H^1(T_1,T_2; H^2(\mc{F}(t))\\ 
\cap\ BC^1([T_1,T_2]; H^2(\mc{F}(t))) \cap H^2(T_1,T_2; L^2(\mc{F}(t))),\\
u \in L^2(T_1,T_2; H^4(\mc{F}(t))) \cap BC^0([T_1,T_2]; H^3(\mc{F}(t)))
\cap H^1(T_1,T_2; H^2(\mc{F}(t)) \\
\cap BC^1([T_1,T_2]; H^1(\mc{F}(t)))\cap\ H^2(T_1,T_2; L^2(\mc{F}(t))), \\
 \ell \in H^2(T_1,T_2), \quad \omega \in H^2(T_1,T_2)  
\Big\}.
\end{gathered}\label{solution space}
\end{equation}
Here $BC^k$ are the functions of class $C^k$ bounded with bounded derivatives.
We set 
\begin{equation}
\begin{gathered}
\|(\rho,u,\ell,\omega)\|_{\widehat{\mc{S}}_{T_1,T_2}} =\|\rho - \overline{\rho}\|_{L^{\infty}(T_1,T_{2}; H^3(\mc{F}(t)))}+ \|\rho - \overline{\rho}\|_{H^{1}(T_1,T_{2}; H^2(\mc{F}(t)))} + \|\rho - \overline{\rho}\|_{W^{1,\infty}(T_1,T_{2}; H^2(\mc{F}(t)))}\\ + \|\rho - \overline{\rho}\|_{H^2(T_1,T_{2}; L^2(\mc{F}(t)))} 
+ \|u\|_{L^2(T_1,T_2; H^4(\mc{F}(t)))} + \|u\|_{L^{\infty}(T_1,T_2; H^3(\mc{F}(t)))} 
+ \|u\|_{H^{1}(T_1,T_{2}; H^2(\mc{F}(t)))} \\
+ \|u\|_{W^{1,\infty}(T_1,T_2; H^1(\mc{F}(t)))} 
+ \|u\|_{H^2(T_1,T_2; L^2(\mc{F}(t)))} 
 + \|\ell\|_{H^2(T_1,T_{2})} + \|\ell\|_{W^{1,\infty}(T_1,T_{2})} \\
 + \|\omega\|_{H^2(T_1,T_{2})} + \|\omega\|_{W^{1,\infty}(T_1,T_{2})},
\end{gathered}\label{notation:norm} 
\end{equation}
and for $T>0$
\begin{equation*}
\|(\rho_0,u_0,\ell_0,\omega_0)\|_{\widehat{\mc{S}}_{T,T}} = \|\rho_0-\overline{\rho}\|_{H^3(\mc{F}(T))}+\|u_0\|_{H^3(\mc{F}(T))}+|\ell_0|+|\omega_0|.
\end{equation*}
Since we are working with regular solutions of \eqref{continuity}--\eqref{feedback}, we need to introduce the following compatibility conditions at initial time:
\begin{equation}\label{finalcompatibility-1}
u_0(y) = \ell_0 + \omega_0 \times (y-h_0)  \mbox{ for }y\in \partial\mc{B}(0),\quad u_{0} =0 \mbox{ on }\partial\Omega,
\end{equation}
\begin{equation}\label{finalcompatibility-2}
-\frac{1}{\rho_0} \div \sigma(u_0,p_0)=0  \mbox{ on }\partial\Omega,
 \end{equation}
 \begin{multline}\label{finalcompatibility-3}
-\Big(\omega_{0} \times (\omega_0 \times (y-h_0))\Big)
-\frac{1}{\rho_0} \div \sigma(u_0,p_0)(y)
\\
= \frac{1}{m}\left[ \int\limits_{\partial \mc{B}(0)} \sigma(u_0,p_0) n\, d\Gamma - k_d\ell_0\right]
+\left[J^{-1}\int\limits_{\partial \mc{B}(0)} (x-h_0)\times \sigma(u_0,p_0) n\, d\Gamma_x\right] \times (y-h_0)
\\
\quad \mbox{ for } y\in \partial\mc{B}(0),
\end{multline}
where 
$$
p_0=a\rho_0^{\gamma}.
$$
Finally, we introduce the following notation
$$
\Omega^0:=\left\{x\in \Omega \ ; \ \dist(x,\partial\Omega)>1 \right\}.
$$

Our hypotheses on $k_p$ and $k_d$ are the following ones:
\begin{equation}\label{hypkp}
k_p \in C^1(\mathbb{R}_{+},[0,1]), \ k_p(0)=0, \ k_p>0 \ \text{in} \ (0,\infty), \  k_p\equiv 1 \ \text{in} \ [T_I,\infty),\  0\leq k_{p}'< \frac{k_d}{2T_{I}^2}
\end{equation}
for some $T_I>0$.
\begin{thm}\label{global existence}
Assume that $\Omega^0$ is non empty and connected. Let $h_1 \in \Omega^0$ and $\overline{\rho} >0$. Assume $w$ is given by the feedback law \eqref{feedback} with $(k_p,k_d)$
satisfying \eqref{hypkp}. 
There exists $\delta >0$ such that for any
\begin{equation}\label{initial condition space:global}
h_0 \in \Omega^0,\, \rho_0 \in H^3(\mc{F}(0)),\ \rho_0 > 0, \ u_0 \in H^3(\mc{F}(0)),\ \ell_0,\ \omega_0 \in \mathbb{R}^3,
\end{equation}
 satisfying the compatibility conditions \eqref{finalcompatibility-1}--\eqref{finalcompatibility-3} with
\begin{equation}\label{smallness}
\|(\rho_0,u_0,\ell_0,\omega_0)\|_{\widehat{\mc{S}}_{0,0}}+|h_1-h_0|\leq \delta,
\end{equation}
the system \eqref{continuity}--\eqref{feedback} admits a unique strong solution
$(\rho,u,\ell,\omega) \in \widehat{\mc{S}}_{0,\infty}$, $h\in L^{\infty}(0,\infty)$.
Moreover, there exist $C,\eta >0$ such that
\begin{equation}\label{final estimate}
\|(\rho,u,\ell,\omega)\|_{\widehat{\mc{S}}_{0,\infty}} + \|\sqrt{k_p}(h_1-h)\|_{L^{\infty}(0,\infty)}
\leq 
C\left(\|(\rho_0,u_0,\ell_0,\omega_0)\|_{\widehat{\mc{S}}_{0,0}}+|h_1-h_0|\right),
\end{equation}
\begin{equation}\label{1451}
\dist(h(t),\partial\Omega)>1+\eta \quad (t\geq 0).
\end{equation}
\end{thm}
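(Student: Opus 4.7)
The plan is to combine a local strong-solvability result for the moving-domain compressible Navier-Stokes-rigid body system with uniform-in-time a priori estimates obtained from a Lyapunov functional adapted to the PD controller \eqref{feedback}, and then extend globally by a continuation argument. The first step is local existence: one uses the standard change of variables that straightens the fluid domain onto $\mc{F}(0)$ and runs the fixed-point scheme of \cite{Muriel-Sergio-IHP} (itself based on \cite{MR713680}), noting that the feedback term $w$ is Lipschitz in $(h,\ell)$ and therefore enters as a harmless lower-order perturbation in the construction. This yields, for data satisfying \eqref{initial condition space:global}--\eqref{finalcompatibility-3}, a unique strong solution in $\widehat{\mc{S}}_{0,T^*}$ on some short interval $[0,T^*]$.

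The heart of the argument is an energy identity obtained by testing \eqref{momentum} with $u$, using \eqref{continuity} and \eqref{linear:body}--\eqref{boundary fs}, and adding the spring potential $\frac{k_p(t)}{2}|h_1-h(t)|^2$ to the standard kinetic and elastic energies. The resulting Lyapunov functional
\begin{equation*}
\mc{E}(t) = \frac{1}{2}\int_{\mc{F}(t)}\rho|u|^2\,dx + \frac{a}{\gamma-1}\int_{\mc{F}(t)}\bigl(\rho^{\gamma}-\overline{\rho}^{\gamma}-\gamma\overline{\rho}^{\gamma-1}(\rho-\overline{\rho})\bigr)\,dx + \frac{m}{2}|\ell|^2 + \frac{1}{2}J\omega\cdot\omega + \frac{k_p(t)}{2}|h_1-h|^2
\end{equation*}
satisfies, through Reynold's transport theorem and an integration by parts in which the boundary terms on $\partial\mc{B}(t)$ cancel against \eqref{linear:body}--\eqref{angular:body},
\begin{equation*}
\frac{d\mc{E}}{dt}+\int_{\mc{F}(t)}\bigl(2\mu|\mathbb{D}(u)|^2+\lambda(\div u)^2\bigr)\,dx + k_d|\ell|^2 = \frac{k_p'(t)}{2}|h_1-h(t)|^2.
\end{equation*}
The forcing on the right is active only for $t\in[0,T_I]$; there we bound $|h_1-h(t)|^2\leq 2|h_1-h_0|^2+2T_I\int_0^t|\ell|^2\,ds$ and invoke $k_p'<k_d/(2T_I^2)$ to absorb the result into half of the dissipation $k_d|\ell|^2$, up to a remainder controlled by $|h_1-h_0|^2$. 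Beyond $T_I$, $k_p'\equiv 0$ and $\mc{E}$ is strictly dissipated. This delivers a uniform bound on the low-order quantities, in particular on $\|\sqrt{k_p}(h_1-h)\|_{L^{\infty}(0,\infty)}$ in terms of the initial data.

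To upgrade to the full $\widehat{\mc{S}}_{0,T}$-norm I would follow the scheme of \cite{Muriel-Sergio-IHP}: differentiate \eqref{continuity}--\eqref{angular:body} once and twice in time, apply elliptic regularity for the stationary Lamé problem on the deformed fluid domain to recover spatial regularity of $u$, and close in the effective-viscous-flux variable for $\rho$. The constants in these Sobolev estimates a priori depend on the ball position; however, the Lyapunov bound on $\sqrt{k_p}(h_1-h)$ combined with $h_1\in\Omega^0$ and the smallness of $|h_1-h_0|$ feeds a bootstrap showing $\dist(h(t),\partial\Omega)\geq 1+\eta$ throughout the interval of existence, which both proves \eqref{1451} and gives time-independent constants. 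Setting $T^{**}=\sup\{T:\|(\rho,u,\ell,\omega)\|_{\widehat{\mc{S}}_{0,T}}\leq 2C_0\delta\}$ and restarting the local-existence step at $t=T^{**}$, the usual continuation argument contradicts the maximality of $T^{**}$ for $\delta$ small, yielding $T^{**}=\infty$ and the estimate \eqref{final estimate}.

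The principal obstacle is coupling the stabilization estimate, which only genuinely controls $|h_1-h|$ after $t=T_I$ (since $k_p(0)=0$), with high-regularity estimates that must be valid from the outset. The condition $k_p'<k_d/(2T_I^2)$ in \eqref{hypkp} is precisely what allows one to absorb the short-time drift $|h(t)-h_0|\lesssim\int_0^t|\ell|\,ds$ into the damping dissipation; any looser condition would break the Lyapunov identity and prevent the global bound from being closed at low order, let alone propagated to the $H^3$ level through the moving-domain elliptic regularity chain.
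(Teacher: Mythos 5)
Your proposal is correct and follows essentially the same route as the paper: local existence by the Lagrangian fixed-point scheme of \cite{Muriel-Sergio-IHP}, a Lyapunov/energy estimate augmented by the spring potential $\tfrac{k_p(t)}{2}|h_1-h|^2$ in which $k_p'|h_1-h|^2$ is absorbed into the damping via $0\leq k_p'<k_d/(2T_I^2)$, higher-order estimates by time differentiation and Lam\'e elliptic regularity, and a continuation/iteration argument. The paper makes one step more explicit than you do: since the a priori bound only controls $\|\sqrt{k_p}(h_1-h)\|_{L^\infty}$ and $k_p(0)=0$, the restart of the local theorem at later times uses $|h_1-h(jT_*)|\leq \|\sqrt{k_p}(h_1-h)\|_{L^\infty}/\sqrt{k_p(T_*)}$ together with the monotonicity of $k_p$, which is exactly where the factor $1/\sqrt{k_p(T_*)}$ enters the choice of $\delta$.
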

We are now in a position to state our stabilization result. 
\begin{thm}\label{asymptotic behavior}
With the notations and assumptions of \cref{global existence}, the solution $(\rho,u,h,\ell,\omega)$ of \eqref{continuity}-\eqref{feedback} satisfies
\begin{align}
\lim_{t \to \infty}\|\rho(t,\cdot)-\overline{\rho}\|_{H^2(\mc{F}(t))}=0,\quad \lim_{t \to \infty}\|u(t,\cdot)\|_{H^2(\mc{F}(t))}=0,\label{limit:fluid}\\
\lim_{t \to \infty} h(t)=h_1,\quad \lim_{t \to \infty}\ell(t)=0,\quad \lim_{t \to \infty}\omega(t)=0 \label{limit:solid}.
\end{align}
\end{thm}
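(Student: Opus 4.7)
The strategy is to extract decay from the time-integrability already provided by \eqref{final estimate} combined with the elementary lemma: if $f\ge 0$ and $f,f'\in L^1(0,\infty)$, then $f(t)\to 0$ as $t\to\infty$. Since the fluid domain $\mc{F}(t)$ moves with time, I would begin by pulling all the spatial norms back to a fixed reference configuration via the change of variables already constructed in the proof of \cref{global existence}. The uniform lower bound \eqref{1451} on $\dist(h(t),\partial\Omega)-1$ makes this pull-back bi-Lipschitz with constants independent of $t$, so the norms $\|\cdot\|_{H^k(\mc{F}(t))}$ are uniformly equivalent to fixed-domain norms and the ensuing real-variable arguments can be run on a time-independent reference set.

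To get \eqref{limit:fluid}, I would use that \eqref{final estimate} yields $u\in L^2(0,\infty;H^4)$ and $u\in H^1(0,\infty;H^2)$, hence both $u$ and $u_t$ belong to $L^2(0,\infty;H^2)$. Thus $f(t):=\|u(t)\|_{H^2}^{2}\in L^1(0,\infty)$ and
$$
|f'(t)|\le 2\|u(t)\|_{H^2}\|u_t(t)\|_{H^2}\in L^1(0,\infty)
$$
by Cauchy--Schwarz, so $f(t)\to 0$. The same scheme applied to $\rho-\overline{\rho}\in H^1(0,\infty;H^2)$, whose embedded $L^2$-in-time component supplies the integrability of the squared norm itself, yields $\|\rho(t)-\overline{\rho}\|_{H^2}\to 0$.

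For the solid velocities, \eqref{final estimate} gives $\ell,\omega\in H^2(0,\infty)$. The embedding of $H^1(0,\infty)$ into the space of continuous functions vanishing at infinity then gives $\ell(t),\omega(t)\to 0$ as $t\to\infty$, and moreover $\ell'(t)\to 0$ since $\ell'\in H^1(0,\infty)$. This will be needed for the last step.

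The delicate point is the convergence $h(t)\to h_1$, because $\ell$ is only known to be $L^2$-in-time and hence $h'=\ell$ cannot be integrated directly. Instead I would read $h_1-h$ off the Newton equation: for $t\ge T_I$ the assumption \eqref{hypkp} gives $k_p(t)\equiv 1$, so \eqref{linear:body} together with \eqref{feedback} yields
$$
h_1-h(t)=m\ell'(t)+k_d\ell(t)+\int_{\partial\mc{B}(t)}\sigma(u(t),p(t))N\,d\Gamma.
$$
The first two terms tend to $0$ by the previous paragraph. For the stress integral, split $\sigma(u,p)=2\mu\mathbb{D}(u)+\lambda(\div u)\mathbb{I}_3-p\mathbb{I}_3$. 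The viscous part vanishes in the limit because $u(t)\to 0$ in $H^2(\mc{F}(t))$ and the trace theorem sends this to convergence of $\nabla u|_{\partial\mc{B}(t)}$ to zero in $L^2(\partial\mc{B}(t))$. The pressure contribution equals $-a\int_{\partial\mc{B}(t)}\rho^{\gamma}N\,d\Gamma$; since $\rho(t)\to\overline{\rho}$ uniformly, by the Sobolev embedding $H^2\hookrightarrow L^\infty$ in dimension three, this integral converges to $-a\overline{\rho}^{\gamma}\int_{\partial\mc{B}(t)}N\,d\Gamma=0$, the vanishing following from the divergence theorem applied to the constant vector field on the ball. Hence $h_1-h(t)\to 0$, which together with the previous paragraphs proves \eqref{limit:solid}.
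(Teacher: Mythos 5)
Your proof is correct, and for the heart of the theorem --- the convergence $h(t)\to h_1$ --- it takes a genuinely different and more elementary route than the paper. The first half (deducing \eqref{limit:fluid} and the vanishing of $\ell,\omega$ from the time-integrability in \eqref{final estimate} via the embedding of $W^{1,1}(0,\infty)$ into functions vanishing at infinity) coincides with the paper's argument, which simply cites \cite[Corollary 8.9]{brezis2010functional} for $\rho-\overline\rho\in H^1(0,\infty;H^2(\mc{F}(t)))$, $u\in H^1(0,\infty;H^2(\mc{F}(t)))$ and $\ell,\omega\in H^2(0,\infty)$. For $h(t)\to h_1$, however, the paper does something much heavier: it introduces the notion of weak solutions \`a la Feireisl, invokes the weak sequential stability theorem of \cite{Feireisl-ARMA}, extracts a sequence $t_n\to\infty$ with $h(t_n)\to h^*$ (possible by \eqref{1451}), shows the time-translated solutions converge to a stationary weak solution with $\widehat u\equiv 0$ and $\widehat h\equiv h^*$, and then tests the weak momentum balance with divergence-free rigid test fields to conclude $k_p|h_1-h^*|^2=0$. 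You instead solve the rigid-body equation \eqref{linear:body} for $h_1-h(t)$ on $[T_I,\infty)$, where $k_p\equiv 1$, and show that each of $m\ell'(t)$, $k_d\ell(t)$ and $\int_{\partial\mc{B}(t)}\sigma(u,p)N\,d\Gamma$ tends to zero --- the last one using the uniform trace constants guaranteed by \eqref{1451}, the uniform convergence $\rho(t)\to\overline\rho$ from $H^2\hookrightarrow L^\infty$, and the identity $\int_{\partial\mc{B}(t)}N\,d\Gamma=0$. This is a legitimate shortcut available precisely because Theorem \ref{global existence} provides strong solutions with $\ell\in H^2(0,\infty)$ (hence $\ell'(t)\to 0$), and it yields the convergence of $h(t)$ directly rather than through a subsequence-plus-uniqueness-of-limit argument; what the paper's compactness route buys in exchange is robustness (it uses only energy-level information and weak-$*$ convergence of the control, so it would survive in a weak-solution setting where the pointwise ODE reading is unavailable).
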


During the last two decades, there has been a considerable interest in fluid-structure interaction problems involving moving interfaces. Broadly speaking, these types of models can be classified into two types: either the structure is moving inside the fluid or the structure is located at the boundary of the fluid domain. Since in this article we are interested in studying the motion of body inside the compressible fluid domain, below we mention related works from the literature concerning this case only.

The global-in-time existence (up to contact) of weak solutions for compressible viscous flow (for $\gamma \geq 2$) in a bounded domain of $\mathbb{R}^3$ interacting with a finite number of rigid bodies has been studied by Desjardins and Esteban \cite{MR1765138}. In \cite{Feireisl-ARMA}, Feireisl established the global existence result (for $\gamma > 3/2$) regardless of possible collisions of several rigid bodies or a contact of the rigid bodies with the exterior boundary. 
Regarding strong solutions, the existence and uniqueness of global solutions for small initial data have been achieved in \cite{Muriel-Sergio-IHP} in the Hilbert space framework by 
Boulakia and Guerrero as long as no collisions occur. Their work is based on a method proposed in \cite{MR713680} for a viscous compressible fluid (without structure).
In a $L^{p}$-$L^{q}$ setting, the authors in \cite{MR3356467} proved the existence and uniqueness of local-in-time strong solutions for the system composed by rigid bodies immersed into a viscous compressible fluid and in \cite{haak:hal-01619647}, the authors establish the global in time existence up to contact.

Let us mention some works related to the large time behavior of fluid-structure interaction system. 
In \cite{MR2001181}, the authors analyze the fluid-structure model in one space dimension where the fluid is governed by the viscous Burgers equation 
and the solid mass is moving by the difference of pressure at both sides of it. 
They obtain that the asymptotic profile of the fluid is a self-similar solution of the Burgers equation and the point mass enjoys the parabolic trajectory as $t\rightarrow \infty$. 
An extension of this work in several space dimensions is obtained in \cite{MR2131059} for the heat equations in interaction with a rigid body.
Their result is that as $t \rightarrow \infty$, the fluid solution behaves as the fundamental solution of the heat equation and the ball goes to infinity in bidimensional case 
whereas the ball remains in a bounded domain in three dimension. 
Regarding the long-time behavior of a moving particle inside a Navier-Stokes fluid, the authors in \cite{MR2763535} consider in particular 
the case of a ball falling over an horizontal plane and show that the velocity of the fluid goes to zero and the particle reaches the bottom of the container asymptotically in time.
In \cite{MR3207006}, the authors analyze the case of a rigid disk immersed into a two-dimensional Navier-Stokes equations filling the exterior of the structure domain.
They restrict to the case of a solid and a fluid with the same density and for the linear case.

Finally, let us mention two works using a control supported on the rigid body:  \cite{Cindea} in the $1$d case for a Burgers-particle system and \cite{TT-MT-GW} in the $3$d case for a rigid ball moving into a viscous incompressible fluid. The main difference between this study and the two previous references come from the fact that 
in our case we need to deal with stronger solutions than in the incompressible case. In particular, to avoid compatibility conditions at $t=0$ that involve the feedback control $w$, we take here $k_p$ depending on time with $k_p(0)=0$.

The plan of the paper is the following. In Section \ref{sec:Local in time existence of solution}, we establish the local-in-time existence of solutions for the system \eqref{continuity}--\eqref{feedback}. We then obtain a priori estimates in Section \ref{sec:Global in time existence of solution} to prove \cref{global existence}. Finally Section \ref{sec:Proof of Theorem asymptotic behavior} is devoted to the asymptotic analysis of the solutions in order to prove \cref{asymptotic behavior}.

\subsection*{Notation}
For any $a\in \mathbb{R}^3$, we set 
\begin{equation*}
\widehat{\mc{B}}(a)=\{x\in \mathbb{R}^3\mid |x-a|< 1\}, \quad \widehat{\mc{F}}(a) = \Omega \setminus \overline{\widehat{\mc{B}}(a)}.
\end{equation*}
In particular, 
\begin{equation*}
\mc{B}(t)= \widehat{\mc{B}}(h(t)),\quad \mc{F}(t)=\widehat{\mc{F}}(h(t)).
\end{equation*}
In this article, to shorten the notation, we write $H^m$ and $L^2$ instead of $H^m(\mc{F}(0))$ and $L^2(\mc{F}(0))$.

Assume $\mathfrak{X}$ is Banach space. We need to consider a particular norm for $H^m(0,T;\mathfrak{X})$ if $m\in \mathbb{N}^*$ and if $T\in \mathbb{R}^*_+$. 
\begin{equation}\label{HmX}
\|f\|_{H^m_{\infty}(0,T;\mathfrak{X})} = \|f\|_{H^m(0,T;\mathfrak{X})} + \|f\|_{W^{m-1,\infty}(0,T;\mathfrak{X})}.
\end{equation}
Using the Sobolev embedding, this norm is equivalent to the usual one, but the corresponding constants depend on $T$ and that is the reason why we introduce such a notation.

Assume $\mathfrak{X}_1$ and $\mathfrak{X}_2$ are Banach spaces. We also introduce the following spaces
$$
H^m(0,T;\mathfrak{X}_1,\mathfrak{X}_2) = L^2(0,T;\mathfrak{X}_1) \cap H^m(0,T;\mathfrak{X}_2) \quad (m\geq 1).
$$
In the case $T\in \mathbb{R}^*_+$, we also need to introduce the following norm for the above space:
\begin{equation}\label{infinity-1}
\|f\|_{H^1_{\infty}(0,T; H^2, L^2)} = \|f\|_{L^2(0,T; H^2)} + \|f\|_{L^{\infty}(0,T; H^1)} + \|f\|_{H^1(0,T; L^2)},
\end{equation}
\begin{equation}\label{infinity-2}
\|f\|_{H^2_{\infty}(0,T; H^4, L^2)} = \|f\|_{L^2(0,T; H^4)} + \|f\|_{L^{\infty}(0,T; H^3)} + \|f\|_{H^1(0,T; H^2)} + \|f\|_{W^{1,\infty}(0,T; H^1)} + \|f\|_{H^2(0,T; L^2)}.
\end{equation}
Using interpolation results, we see again that the corresponding norm is equivalent to $H^1(0,T; H^2)$ but the corresponding constants depend on $T$.

\section{Local in time existence of solutions}\label{sec:Local in time existence of solution}
In order to prove Theorem \ref{global existence}, we first prove the existence and uniqueness of strong solutions of system \eqref{continuity}-\eqref{feedback}
for small times. More precisely, we show in this section the following result:
\begin{thm}\label{local-in-time existence}
Let $h_1 \in \Omega^0$ and $\overline{\rho} >0$. Assume $w$ is given by the feedback law \eqref{feedback} with $k_d \in \mathbb{R}$ and $k_p \in H^1_{loc}([0,\infty))$. There exist $\delta_0,C_{*},T_* >0$ such that for any
\begin{equation}\label{initial cond req-1}
h_0 \in \Omega^0,\ \rho_0 \in H^3,\ u_0 \in H^3,\ \ell_0,\ \omega_0 \in \mathbb{R}^3,
\end{equation}
 satisfying the compatibility conditions \eqref{finalcompatibility-1}--\eqref{finalcompatibility-3} with
\begin{equation}\label{initial cond req-2}
 \|(\rho_0,u_0,\ell_0,\omega_0)\|_{\widehat{\mc{S}}_{0,0}}+|h_1-h_0|\leq \delta_0,
 \end{equation}
the system \eqref{continuity}-\eqref{initial} admits a unique strong solution
$(\rho, u, \ell, \omega) \in \widehat{\mc{S}}_{0,T_*}$, $h\in L^{\infty}(0,T_*)$ 
and
\begin{equation}\label{final local estimate}
\|(\rho,u,\ell,\omega)\|_{\widehat{\mc{S}}_{0,T_*}}+ \|h_1-h\|_{L^{\infty}(0,T_*)} \leq 
C_*\Big(\|(\rho_0,u_0,\ell_0,\omega_0)\|_{\widehat{\mc{S}}_{0,0}}+|h_1-h_0|\Big).
\end{equation} 
\end{thm}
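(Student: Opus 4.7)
The strategy is classical for fluid-structure interaction with compressible fluid, and I would essentially follow the scheme of \cite{Muriel-Sergio-IHP} (based on \cite{MR713680}) with two adaptations: a careful tracking of the center-of-mass position $h(t)$ (which appears in the feedback) and the treatment of the low-order perturbation produced by $w$.

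First, since the body is a ball, I would introduce a change of variables $X(t,\cdot):\mc{F}(0)\to\mc{F}(t)$ that coincides with the translation $y\mapsto y+h(t)-h_0$ in a neighborhood of $\overline{\mc{B}(0)}$ and with the identity near $\partial\Omega$, connected by a smooth cutoff. This is well-defined as long as $h(t)$ stays in $\Omega^0$, so an a priori bound $|h(t)-h_0|\leq c$ must be preserved along the iteration; this is where the condition $h_0\in\Omega^0$ and the smallness of $|h_1-h_0|$ enter. Pulling back \eqref{continuity}--\eqref{initial} to $\mc{F}(0)$, the transformed unknowns $(\widehat{\rho},\widehat{u},\ell,\omega)$ satisfy a system of the form
\begin{equation*}
\partial_t\widehat{\rho}+\overline{\rho}\div\widehat{u}=F_1,\qquad \overline{\rho}\partial_t\widehat{u}-\mu\Delta\widehat{u}-(\mu+\lambda)\nabla\div\widehat{u}+a\gamma\overline{\rho}^{\gamma-1}\nabla\widehat{\rho}=F_2,
\end{equation*}
together with linearized ODEs for $\ell,\omega,h$ with forcing $F_3,F_4$ involving the pulled-back stress and the feedback $w$, and with the boundary condition $\widehat{u}=\ell+\omega\times(y-h_0)$ on $\partial\mc{B}(0)$. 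The remainders $F_i$ depend quasilinearly on $(\widehat{\rho},\widehat{u},\ell,\omega)$ and vanish at $t=0$ when $h=h_0$; crucially they are of higher order and can be made small for small $T$.

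Second, I would set up a fixed-point map $\mc{T}$: given $(\widetilde{\rho},\widetilde{u},\widetilde\ell,\widetilde\omega)$ in a ball $B_R\subset\widehat{\mc{S}}_{0,T}$ (with the initial data fixed and the compatibility conditions \eqref{finalcompatibility-1}--\eqref{finalcompatibility-3} holding), build the change of variables from $\widetilde h(t)=h_0+\int_0^t\widetilde\ell$, freeze the coefficients using the tilded quantities to produce the $F_i$, and let $\mc{T}(\widetilde\rho,\widetilde u,\widetilde\ell,\widetilde\omega)$ be the solution of the resulting linear problem. The linear system splits into a transport equation for the density (which propagates $H^3$ regularity at a rate controlled by $\|\nabla\widehat u\|_{L^1(L^\infty)}$) and a linear parabolic system for $(\widehat u,\ell,\omega)$ coupled to it through the pressure gradient. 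For the latter I would invoke the maximal $L^2$-regularity result for the linearized compressible fluid--rigid body system established in \cite{Muriel-Sergio-IHP}, yielding the full norm \eqref{notation:norm}. The feedback $w=k_p(h_1-h)-k_d\ell$ enters only as a zero-th order right-hand side in the ODE for $\ell$: since $k_p\in H^1_{loc}$ and $k_d\in\mathbb{R}$ are given, it contributes a bounded, linear perturbation that is absorbed into $F_3$.

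Third, I would show that for $T_*,\delta_0$ small and $R=C_*\|(\rho_0,u_0,\ell_0,\omega_0)\|_{\widehat{\mc{S}}_{0,0}}$ with $C_*$ the constant from the linear theory, $\mc{T}$ maps $B_R$ into itself and is a strict contraction in the weaker norm $L^\infty(L^2)\times L^2(H^1)\times H^1\times H^1$ (the standard device to avoid loss of derivatives for compressible systems). Uniform boundedness in $\widehat{\mc{S}}_{0,T_*}$ plus contraction in the weak norm yield a unique fixed point, hence the strong solution; the estimate \eqref{final local estimate} follows from the linear bound applied at the fixed point together with $\|h_1-h\|_{L^\infty(0,T_*)}\leq |h_1-h_0|+T_*\|\ell\|_{L^\infty}$. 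Positivity of $\rho$ is preserved on $[0,T_*]$ by continuity, and $\dist(h(t),\partial\Omega)>1$ by shrinking $T_*$ further.

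The main obstacle is the mismatch in regularity between density and velocity intrinsic to the compressible system: closing the nonlinear estimates in $\widehat{\mc{S}}_{0,T}$ requires simultaneously controlling $\widehat\rho\in L^2(H^3)\cap H^2(L^2)$ and $\widehat u\in L^2(H^4)\cap H^2(L^2)$ when the coefficients of both equations involve the pulled-back quantities. Bookkeeping of the remainders $F_i$ to show they scale like $T^{1/2}$ or $R^2$ is tedious but routine; the use of the norms \eqref{HmX}--\eqref{infinity-2} with $T$-independent embedding constants is what makes the small-$T$ absorption actually work. The compatibility conditions \eqref{finalcompatibility-1}--\eqref{finalcompatibility-3} are designed precisely so that the linear problem has the requisite regularity at $t=0$, and \eqref{hypkp} with $k_p(0)=0$ is what permits $w$ to appear without introducing extra compatibility at $t=0$.
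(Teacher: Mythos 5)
Your proposal is sound and would yield the theorem, but it follows a genuinely different route from the paper on three points. First, the change of variables: you glue the rigid translation $y\mapsto y+h(t)-h_0$ to the identity via a cutoff, whereas the paper uses the genuine Lagrangian flow map $X$ of the fluid velocity (Section \ref{Change of variables}), conjugated by the rotation $Q(t)$. The Lagrangian choice kills the convection terms $u\cdot\nabla\rho$ and $(u\cdot\nabla)u$ outright, so the transformed density equation is $\partial_t\widetilde{\rho}+\rho_0\operatorname{div}\widetilde{u}=F_1$ with no transport; your cutoff map keeps a residual transport term and hence requires the $L^1(0,T;W^{1,\infty})$ control of the velocity that you mention. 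Second, the linearization: you keep the pressure gradient $a\gamma\overline{\rho}^{\gamma-1}\nabla\widehat{\rho}$ and the fluid--solid stress coupling in the linear operator and appeal to a maximal-regularity result for the coupled system, while the paper puts the pressure term into $F_2$ and the whole boundary stress into $F_3,F_4$, so that the linear problem (\cref{linear system:existence}) is a decoupled cascade: trivial ODEs for $(\widetilde{\ell},\widetilde{\omega})$, a Lam\'e parabolic problem for $\widetilde{u}$ solved by Galerkin plus elliptic regularity after lifting the boundary data, and then the density obtained by time integration. Your coupled linearization needs a stronger black box but produces remainders that are more obviously perturbative; the paper's decoupling is more elementary but pushes all the work into the $T^\alpha$-estimates of \cref{lip:F1}. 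Third, the fixed point: you use the classical device of boundedness in the strong norm plus contraction in a weak norm, whereas the paper contracts directly in the full norm $\mc{S}_T$ -- this is precisely what the Lagrangian formulation buys, since without the transport term there is no loss of derivatives in the difference estimates. One small inaccuracy: the remainders do not vanish at $t=0$ (e.g.\ $F_2(0)=-\omega_0\times u_0+\rho_0^{-1}\nabla p_0$ and $F_3(0)$ carries the full initial stress); they are merely controlled by the initial data, which is why the $L^\infty$-in-time components of the norms are estimated by $CT^\alpha$ \emph{plus} initial-data terms and why the compatibility conditions \eqref{finalcompatibility-1}--\eqref{finalcompatibility-3} are needed. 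This does not affect the viability of your argument, since your absorption step only uses smallness of the genuinely nonlinear parts.
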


\subsection{Lagrangian change of variables}\label{Change of variables}
Firstly, we use a Lagrangian change of variables to rewrite the system \eqref{continuity}--\eqref{feedback} in a fixed spatial domain: let introduce the flow $X(t,\cdot): \overline{\mc{F}(0)} \rightarrow \overline{\mc{F}(t)}$
defined by 
$$
\begin{cases}
\displaystyle\frac{\partial X}{\partial t}(t,y)=u(t,X(t,y)), \\ X(0,y)=y.
\end{cases}
$$
Due to the boundary conditions, we have
$$
X(t,y)=
\begin{cases}
h(t) + Q(t)(y-h_0)&\text{if} \ y\in \partial\mc{B}(0), \\ 
y&\text{if} \ y\in \partial\Omega,
\end{cases}
$$
where $Q(t) \in SO(3)$ is the rotation matrix associated to the angular velocity $\omega$:
$$
Q' =\mathbb{A}(\omega)Q,\quad Q(0) =\mathbb{I}_3.
$$
For any $\omega\in \mathbb{R}^3$, $\mathbb{A}(\omega)$ is the skew-symmetric matrix:
$$\mathbb{A}(\omega)=\begin{pmatrix}
0 & -\omega_3 & \omega_2 \\
\omega_3 & 0 & -\omega_1 \\
-\omega_2 & \omega_1 & 0
\end{pmatrix}.
$$
If $u$ is regular enough, $X$ is well-defined and $X(t,\cdot)$ is a $C^{1}$-diffeomorphism from $\overline{\mc{F}(0)}$ onto $\overline{\mc{F}(t)}$ for all $t \in (0,T)$. 
We denote by $Y(t,\cdot)$ the inverse of $X(t,\cdot)$ and we consider the following change of variables 
\begin{align}
\widetilde{u}(t,y)=Q(t)^{\top}u(t,X(t,y)),\quad \widetilde{\rho}(t,y)=\rho(t,X(t,y))-\overline{\rho}, \label{chng of var:fluid}\\
\widetilde{h}(t)=h(t)-h_1,\quad \widetilde{\ell}(t)=Q(t)^{\top}\ell(t), \quad \widetilde{\omega}(t)=Q(t)^{\top}\omega(t) \label{chng of var:body}. 
\end{align}
Note that now we have
\begin{equation}\label{def:X}
X(t,y)= y + \int\limits_{0}^{t} Q(s)\widetilde{u}(s,y) ds, \quad \forall \, y\in \overline{\mc{F}(0)}.
\end{equation}

Under the change of variables \eqref{chng of var:fluid}-\eqref{chng of var:body}, the system \eqref{continuity}-\eqref{initial} is transformed as follows:
\begin{align}
\frac{\partial \widetilde{\rho}}{\partial t} + \rho_{0}\operatorname{div}\widetilde{u} &= F_{1}(\widetilde{\rho},\widetilde{u},\widetilde{\ell},\widetilde{\omega},Q)   \quad \mbox{ in }(0,T)\times \mc{F}(0), \label{reform:fluiddensity}\\
\frac{\partial \widetilde{u}}{\partial t} - \frac{\mu}{\rho_{0}}\Delta \widetilde{u} - \frac{\lambda+\mu}{\rho_{0}}\nabla \left(\operatorname{div} \widetilde{u}\right)&=F_{2}(\widetilde{\rho},\widetilde{u},\widetilde{\ell},\widetilde{\omega},Q)  \quad\mbox{ in }\quad (0,T)\times \mc{F}(0), \label{reform:fluidvelocity} \\ 
m\widetilde{\ell}'  &=F_{3}(\widetilde{\rho},\widetilde{u},\widetilde{h},\widetilde{\ell},\widetilde{\omega},Q) \quad\mbox{ in }\quad (0,T),  \label{reform:rigidlinear}\\
J\widetilde{\omega}' &=F_{4}(\widetilde{\rho},\widetilde{u},\widetilde{\ell},\widetilde{\omega},Q) \quad\mbox{ in }\quad (0,T),\label{reform:rigidangular}\\
\widetilde{h}' &= Q\widetilde{\ell},\, Q' =Q\mathbb{A}(\widetilde{\omega}) \quad\mbox{ in }\quad (0,T), \label{reform:position}\\
\widetilde{u}&=\widetilde{\ell}+\widetilde{\omega}\times (y-h_0) \quad\mbox{ on }\quad (0,T)\times \partial \mc{B}(0), \label{reformboundary1:vel}\\
\widetilde{u}&=0 \quad\mbox{ in }\quad (0,T)\times \partial \Omega,\label{reformboundary2:vel}\\
\widetilde{\rho}(0,\cdot)&=\rho_0(\cdot) - \overline{\rho},\quad \widetilde{u}(0,\cdot)=u_0(\cdot), \quad\mbox{ in }\quad \mc{F}(0), \label{reforminitialcond:densityvel}\\
\widetilde{h}(0)&=h_0-h_1,\quad \widetilde{\ell}(0) =\ell_{0}, \quad \widetilde{\omega}(0) =\omega_0, \quad Q(0) =\mathbb{I}_3.\label{reformbody:initial}
\end{align}

In the above equations, $F_1, F_2, F_3, F_{4}$ are defined in the following way:
\begin{equation}\label{F1}
F_{1}(\widetilde{\rho},\widetilde{u},\widetilde{\ell},\widetilde{\omega},Q) = - (\widetilde{\rho}+\overline{\rho})\nabla \widetilde{u}: \left[((\nabla Y(X))Q)^{\top} - \mathbb{I}_3\right] - (\widetilde{\rho} + \overline{\rho} - \rho_{0})\operatorname{div}\widetilde{u},
\end{equation}
for $i=1,2,3$:
\begin{multline}\label{F2}
(F_{2})_{i}(\widetilde{\rho},\widetilde{u},\widetilde{\ell},\widetilde{\omega},Q)= - (\widetilde{\omega}\times \widetilde{u})_{i} + \frac{\mu}{\widetilde{\rho}+\overline{\rho}}\sum\limits_{p,l,m}\frac{\partial^{2}\widetilde{u}_{i}}{\partial y_{m}\partial y_{l}}\left(\frac{\partial Y_{m}}{\partial x_{p}}(X)\frac{\partial Y_{l}}{\partial x_{p}}(X) - \delta_{mp}\delta_{lp}\right) \\+ \frac{\mu}{\widetilde{\rho}+\overline{\rho}}\sum\limits_{p,l} \frac{\partial \widetilde{u}_{i}}{\partial y_{l}}\frac{\partial^2 Y_{l}}{\partial x_{p}^2}(X)  +\mu \Delta\widetilde{u}_{i}\left(\frac{\rho_{0}-(\widetilde{\rho}+\overline{\rho})}{\rho_{0}(\widetilde{\rho}+\overline{\rho})}\right) + \frac{\lambda + \mu}{\widetilde{\rho}+\overline{\rho}}\sum\limits_{p,l} \frac{\partial \widetilde{u}_{p}}{\partial y_{l}}\frac{\partial^2 Y_{l}}{\partial x_{p}\partial x_{i}}(X) \\ + \frac{\lambda+\mu}{\widetilde{\rho}+\overline{\rho}}\sum\limits_{p,l,m}\frac{\partial^{2}\widetilde{u}_{p}}{\partial y_{m}\partial y_{l}}\left(\frac{\partial Y_{m}}{\partial x_{p}}(X) - \delta_{mp}\right)\frac{\partial Y_{l}}{\partial x_{i}}(X) + \frac{\lambda + \mu}{\widetilde{\rho}+\overline{\rho}}\sum\limits_{p,l}\frac{\partial^{2}\widetilde{u}_{p}}{\partial y_{p}\partial y_{l}}\left(\frac{\partial Y_{l}}{\partial x_{i}}(X) - \delta_{li}\right) \\+(\lambda + \mu) [\nabla (\operatorname{div}\widetilde{u}) ]_{i}\left(\frac{\rho_{0}-(\widetilde{\rho}+\overline{\rho})}{\rho_{0}(\widetilde{\rho}+\overline{\rho})}\right) + a\gamma(\widetilde{\rho}+\overline{\rho})^{\gamma - 2}\sum\limits_{j,l}Q_{ji}\frac{\partial \widetilde{\rho}}{\partial y_{l}}\frac{\partial Y_{l}}{\partial x_{j}}(X) ,
\end{multline}
\begin{multline}\label{F3}
F_3(\widetilde{\rho},\widetilde{u}, \widetilde{h},\widetilde{\ell},\widetilde{\omega},Q)=-m(\widetilde{\omega}\times \widetilde{\ell}) -\int\limits_{\partial \mc{B}(0)} \Big[\mu\left(Q\nabla\widetilde{u}(\nabla Y(X))+ (Q\nabla\widetilde{u}(\nabla Y(X)))^{\top}\right)\\ + \lambda \left(Q\nabla \widetilde{u}(\nabla Y(X)) : \mathbb{I}_{3} \right)-a(\overline{\rho}+\widetilde{\rho})^{\gamma}\Big]n\, d\Gamma - k_pQ^{\top}\widetilde{h} - k_d\widetilde{\ell},
\end{multline}
\begin{multline}\label{F4}
F_4(\widetilde{\rho},\widetilde{u},\widetilde{\ell},\widetilde{\omega},Q)= -\int\limits_{\partial \mc{B}(0)} (y-h_0) \times \Big[\mu\left(Q\nabla\widetilde{u}(\nabla Y(X))+ (Q\nabla\widetilde{u}(\nabla Y(X)))^{\top}\right) \\+ \lambda \left(Q\nabla \widetilde{u}(\nabla Y(X)) : \operatorname{Id} \right)-a(\overline{\rho}+\widetilde{\rho})^{\gamma}\Big]n\, d\Gamma.
\end{multline}
Here $n(y)=Q(t)^{\top}N(t,x)$ is the unit normal to $\partial \mc{B}(0)$ at the point $y \in \partial \mc{B}(0)$, directed to the interior of the ball.

\subsection{Analysis of a linear problem}
In this section, we want to study the existence and regularity of the solution of the following linear system:
\begin{align}
\frac{\partial \widetilde{\rho}}{\partial t} + \rho_{0}\operatorname{div}\widetilde{u} &= f_{1}\quad\mbox{ in }\quad (0,T)\times \mc{F}(0), \label{linearfluid:density}\\
\frac{\partial \widetilde{u}}{\partial t} - \frac{\mu}{\rho_{0}}\Delta \widetilde{u} - \frac{\lambda+\mu}{\rho_{0}}\nabla \left(\operatorname{div} \widetilde{u}\right)&=f_{2}\quad\mbox{ in }\quad (0,T)\times \mc{F}(0), \label{linearfluid:vel}\\
m\widetilde{\ell}'  &=f_3 \quad\mbox{ in }\quad (0,T),\label{body:linearmom}\\
J\widetilde{\omega}'  &= f_4 \quad\mbox{ in }\quad (0,T),\label{body:angularmom}\\
\widetilde{u}&=\widetilde{\ell}+\widetilde{\omega}\times (y-h_0)\quad\mbox{ on }\quad (0,T)\times \partial \mc{B}(0), \label{boundary1:vel}\\
\widetilde{u}&=0 \quad\mbox{ on }\quad (0,T)\times \partial \Omega, \label{boundary2:vel}\\
\widetilde{u}(0,\cdot)&=u_0(\cdot) \quad\mbox{ in }\quad \mc{F}(0), \label{initialcond:vel}\\
\widetilde{\rho}(0,\cdot)&=\widetilde{\rho}_0 \quad\mbox{ in }\quad \mc{F}(0), \label{initialcond:density}\\
\widetilde{\ell}(0) &=\ell_{0}, \quad \widetilde{\omega}(0) =\omega_0. \label{linearbody:initial}
\end{align}
We introduce the following set for $T>0$:
\begin{multline}\label{13:40}
\mc{S}_{T} = \Big\{ (\widetilde{\rho},\widetilde{u},\widetilde{\ell},\widetilde{\omega}) \mid \widetilde{\rho} \in  H^1(0,T; H^3) \cap C^1([0,T]; H^2) \cap H^2(0,T; L^2), \widetilde{u}\in H^2(0,T; H^4, L^2),\\
\widetilde{\ell} \in H^2(0,T),\, \widetilde{\omega} \in H^2(0,T), \ \widetilde{u}=0 \mbox{ on }\partial\Omega,
\ \widetilde{u}= \widetilde{\ell} + \widetilde{\omega}\times (y-h_0) \mbox{ on }\partial\mc{B}(0),
\ \widetilde{\rho}(0)=\widetilde{\rho}_0,\\ \widetilde{u}(0)=u_0, \, 
\widetilde{\ell}(0) =\ell_{0},\, \widetilde{\omega}(0) =\omega_0\Big\},
\end{multline} 
\mbox{equipped with the norm} 
\begin{multline*} 
\|(\widetilde{\rho},\widetilde{u},\widetilde{\ell},\widetilde{\omega})\|_{\mc{S}_{T}} :=  \|\widetilde{\rho}\|_{H^1_{\infty}(0,T; H^3)} + \|\widetilde{\rho}\|_{W^{1,\infty}(0,T; H^2)} + \|\widetilde{\rho}\|_{H^2(0,T; L^2)} + \|\widetilde{u}\|_{H^2_{\infty}(0,T; H^4,L^2)}\\+ \|\widetilde{\ell}\|_{H^2_{\infty}(0,T)}+ \|\widetilde{\omega}\|_{H^2_{\infty}(0,T)}. 
\end{multline*}
We recall that the norms $\|\cdot\|_{H^1_{\infty}(0,T; H^3)},$ $\|\cdot\|_{H^2_{\infty}(0,T)}$ are defined in \eqref{HmX} and $\|\cdot\|_{H^2_{\infty}(0,T; H^4,L^2)}$ is defined in \eqref{infinity-2}. The space $\mc{S}_T$ is similar to $\widehat{\mc{S}}_{T_1,T_2}$ defined by \eqref{solution space} except that here $\mc{F}(t)$ is replaced by $\mc{F}(0)$ and we add the boundary and initial conditions.

Since $\overline{\rho}>0$, there exists $\delta_0>0$ such that \eqref{initial cond req-2} implies 
$$
\rho_0\geq \frac{\overline{\rho}}{2}>0.
$$
In that case, the system \eqref{linearfluid:density}--\eqref{linearbody:initial} is well-posed:
\begin{prop}\label{linear system:existence}
Let us assume $\overline{\rho}>0$, \eqref{initial cond req-2} with $\delta_0$ as above and
\begin{gather*}
(\widetilde \rho_0,u_0,\ell_0,\omega_0)\in H^3 \times H^3 \times \mathbb{R}^3 \times \mathbb{R}^3,\quad f_1 \in L^2(0,T; H^3)\cap C([0,T]; H^2) \cap H^1(0,T; L^2),\\ f_2 \in H^1(0,T; H^2, L^2),\quad f_3 \in H^1(0,T),\quad f_{4} \in H^1(0,T)
\end{gather*}
with
\begin{equation}\label{comp cond1}
u_0 = \ell_0 + \omega_0 \times (y-h_0) \mbox{ for }y\in \partial\mc{B}(0),\quad u_{0} =0 \mbox{ on }\partial\Omega,
\end{equation}
\begin{equation}\label{comp cond2}
 f_2(0) + \frac{\mu}{\rho_{0}}\Delta u_{0} + \frac{\lambda+\mu}{\rho_{0}}\nabla \left(\operatorname{div} u_{0}\right)=0 \mbox{ on }\partial\Omega,
\end{equation}
\begin{equation}\label{comp cond3}
f_2(0) + \frac{\mu}{\rho_{0}}\Delta u_{0} + \frac{\lambda+\mu}{\rho_{0}}\nabla \left(\operatorname{div} u_{0}\right)=m^{-1}f_3(0) + J^{-1}f_4(0) \times (y-h_0) \mbox{ for }y\in \partial\mc{B}(0).
\end{equation}
Then the  system \eqref{linearfluid:density}--\eqref{linearbody:initial} admits a unique solution $(\widetilde{\rho},\widetilde{u},\widetilde{\ell},\widetilde{\omega})\in \mc{S}_T$. 
 Moreover, there exists $C_{L}>0$ (nondecreasing with respect to $T$) such that 
\begin{multline}\label{est:linearsystemfull}
\|(\widetilde{\rho},\widetilde{u},\widetilde{\ell},\widetilde{\omega})\|_{\mc{S}_{T}} \leq C_{L} \Big(\|f_1\|_{L^2(0,T; H^3)} +\|f_1\|_{L^{\infty}(0,T; H^2)} + \|f_1\|_{H^1(0,T; L^2)} + \|f_{2}\|_{H^1_{\infty}(0,T; L^2,H^2)} \\+ \|f_3\|_{H^1_{\infty}(0,T)} +\|f_{4}\|_{H^1_{\infty}(0,T)} + \|\widetilde \rho_0\|_{H^3} + \|u_0\|_{H^3} + |\ell_0| + |\omega_0| \Big).
\end{multline}
\end{prop}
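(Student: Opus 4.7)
The plan is to exploit the triangular structure of the system \eqref{linearfluid:density}--\eqref{linearbody:initial}: the rigid body ODEs \eqref{body:linearmom}--\eqref{body:angularmom} are decoupled from the fluid, and since the density equation \eqref{linearfluid:density} contains no transport term it reduces to a pointwise ODE in $t$ once $\widetilde u$ is known. Consequently the only genuine PDE to solve is the Lam\'e parabolic system \eqref{linearfluid:vel} on $\mc{F}(0)$ with inhomogeneous Dirichlet boundary data coming from $\widetilde\ell$ and $\widetilde\omega$.

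The first step is to integrate \eqref{body:linearmom}--\eqref{body:angularmom} directly, which gives $\widetilde\ell,\widetilde\omega \in H^2_\infty(0,T)$ with
\begin{equation*}
\|\widetilde\ell\|_{H^2_\infty(0,T)} + \|\widetilde\omega\|_{H^2_\infty(0,T)} \leq C\bigl(\|f_3\|_{H^1_\infty(0,T)} + \|f_4\|_{H^1_\infty(0,T)} + |\ell_0| + |\omega_0|\bigr).
\end{equation*}
I would then reduce the velocity equation to homogeneous Dirichlet data by introducing a cut-off $\chi\in C_c^\infty(\Omega)$ with $\chi\equiv 1$ in a neighbourhood of $\overline{\mc{B}(0)}$, setting $W(t,y) = \chi(y)[\widetilde\ell(t) + \widetilde\omega(t)\times (y-h_0)]$ and $v = \widetilde u - W$. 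Then $v$ satisfies \eqref{linearfluid:vel} with forcing $\widetilde f_2 = f_2 - \partial_t W + (\mu/\rho_0)\Delta W + ((\lambda+\mu)/\rho_0)\nabla\operatorname{div} W \in H^1_\infty(0,T;H^2,L^2)$, homogeneous boundary data on $\partial\Omega \cup \partial\mc{B}(0)$, and initial value $v(0) = u_0 - W(0)$. The compatibility hypotheses \eqref{comp cond1}--\eqref{comp cond3} translate exactly into $v(0)|_{\partial\mc{F}(0)}=0$ and $\partial_t v(0)|_{\partial\mc{F}(0)}=0$, i.e., the natural order-$0$ and order-$1$ compatibility conditions for maximal $H^2_\infty(0,T;H^4,L^2)$ regularity.

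Next I would solve the homogeneous-boundary problem for $v$ using maximal regularity for $\partial_t + \mc{L}$, where $\mc{L}v = -(\mu/\rho_0)\Delta v - ((\lambda+\mu)/\rho_0)\nabla\operatorname{div} v$ is strongly elliptic on $\mc{F}(0)$ with Dirichlet data. Since $\rho_0 \in H^3 \hookrightarrow C^1(\overline{\mc{F}(0)})$ and $\rho_0 \geq \overline\rho/2>0$, the coefficients are $C^1$ and uniformly positive, so standard $L^2$-energy estimates combined with elliptic regularity for $\mc{L}$, applied recursively to the equations obtained by differentiating in time once and twice, yield $v \in H^2_\infty(0,T;H^4,L^2)$ with the desired bound. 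Setting $\widetilde u = v+W$ solves \eqref{linearfluid:vel}--\eqref{initialcond:vel}. Finally, $\widetilde\rho$ is recovered by the explicit formula
\begin{equation*}
\widetilde\rho(t,y) = \widetilde\rho_0(y) + \int_0^t \bigl(f_1(s,y) - \rho_0(y)\operatorname{div}\widetilde u(s,y)\bigr)\,ds,
\end{equation*}
whose regularity $H^1(0,T;H^3) \cap C^1([0,T];H^2) \cap H^2(0,T;L^2)$ follows term by term from the assumed regularity of $f_1$, $\rho_0 \in H^3$, and the already-established regularity of $\operatorname{div}\widetilde u$. Uniqueness follows by applying the same energy estimate to the difference of two solutions with zero data.

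I expect the main technical obstacle to be the sharp maximal regularity estimate for $v$ in the anisotropic space $H^2_\infty(0,T;H^4,L^2)$ with variable coefficient $\rho_0$. This requires differentiating the Lam\'e system up to second order in time, controlling commutators with $\nabla\rho_0$, and using the compatibility conditions at $t=0$ to ensure that $\partial_t v(0)\in H^1_0(\mc{F}(0))$ and $\partial_{tt} v(0) \in L^2(\mc{F}(0))$ are well-defined as initial data for the time-differentiated equations. The fact that $C_L$ is only non-decreasing in $T$ rather than $T$-independent reflects the $T$-dependence of the Sobolev embeddings implicit in the norms \eqref{HmX} and \eqref{infinity-2}.
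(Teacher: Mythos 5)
Your proposal is correct and follows essentially the same route as the paper: solve the ODEs for $\widetilde\ell,\widetilde\omega$ first, lift the rigid boundary data (your cut-off $W$ plays exactly the role of the paper's lifting operator $\mc{R}$), solve the resulting homogeneous Dirichlet Lam\'e parabolic problem by energy/Galerkin arguments combined with elliptic regularity under the compatibility conditions, and recover $\widetilde\rho$ by direct time integration of \eqref{linearfluid:density}. The only cosmetic difference is that the paper cites a Galerkin scheme plus Lam\'e elliptic regularity where you invoke maximal regularity, but these amount to the same estimates.
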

\begin{proof}
We solve \eqref{linearfluid:density}-\eqref{linearbody:initial} like a cascade system: first, 
\eqref{body:linearmom}-\eqref{body:angularmom} admits a unique solution $(\widetilde{\ell},\widetilde{\omega})$
with
\begin{equation}\label{est:body}
	 \|\widetilde{\ell}\|_{H^2_{\infty}(0,T)}
		+ \|\widetilde{\omega}\|_{H^2_{\infty}(0,T)} 
\leq C \Big( \|f_3\|_{H^1_{\infty}(0,T)} + \|f_{4}\|_{H^1_{\infty}(0,T)}
	 + |\ell_0| + |\omega_0| \Big).
\end{equation}

Next, we solve equation \eqref{linearfluid:vel} with the boundary and initial conditions \eqref{boundary1:vel}-\eqref{initialcond:vel}.
First we consider a lifting operator $\mc{R}$, such that for any $a,b\in \mathbb{R}^3$, $\mc{R}(a,b)\in C^\infty(\mathbb{R}^3)$ satisfies
\begin{equation*}
\mc{R}(a,b)=
\begin{cases}
a+b\times (y-h_0)&\mbox{ on } \partial \mc{B}(0),\\
0 & \mbox{ on }  \partial \Omega.
\end{cases}
\end{equation*}
Then $\widetilde{v}=\widetilde{u}-\mc{R}(\widetilde{\ell},\widetilde{\omega})$ satisfies
\begin{equation*}
\left\{
        \begin{array}{ll}
        \displaystyle
\frac{\partial \widetilde{v}}{\partial t} - \dfrac{\mu}{\rho_{0}}\Delta \widetilde{v} - \dfrac{\lambda+\mu}{\rho_{0}}\nabla \left(\operatorname{div} \widetilde{v}\right)&=F=f_{2} + \dfrac{\mu}{\rho_{0}}\Delta \mc{R}(\widetilde{\ell},\widetilde{\omega}) + \dfrac{\lambda+\mu}{\rho_{0}}\nabla \left(\operatorname{div} \mc{R}(\widetilde{\ell},\widetilde{\omega})\right)-\mc{R}(\widetilde{\ell}',\widetilde{\omega}'),\\
&\widetilde{v}=0\quad\mbox{ on }\quad (0,T)\times \partial \mc{F}(0), \\ 
&\widetilde{v}(0,\cdot)=\widetilde{v}_0=u_0-\mc{R}({\ell}_0,{\omega}_0) \quad\mbox{ in }\quad \mc{F}(0).
\end{array}
        \right.
\end{equation*}
By using a standard Galerkin method (see \cite[Chapter 7, Theorem 1, p.354]{MR1625845}) and by using the regularity result of Lam\'e operator (see, for instance, \cite[Theorem 6.3-6, p.296]{MR936420}), 
under the condition that $\partial \mc{F}(0)$ is of class $C^4$, we can show the following result:
if
$$
F\in  L^2(0,T; H^2)\cap H^1(0,T; L^2),\quad \widetilde{v}_0\in H^3 \cap H^1_0,
$$
with the condition
\begin{equation}\label{13:38}
F(0,\cdot)+ \dfrac{\mu}{\rho_{0}}\Delta \widetilde{v}_0 + \dfrac{\lambda+\mu}{\rho_{0}}\nabla \left(\operatorname{div} \widetilde{v}_0\right)=0 \quad \text{on}\ \partial \mc{F}(0),
\end{equation}
then there exists a unique solution $\widetilde{v} \in H^2(0,T; H^4,L^2)$ with the estimate
$$
\|\widetilde{v}\|_{H^2_{\infty}(0,T; H^4,L^2)} \leq C\Big( \|F\|_{H^1_{\infty}(0,T; L^2,H^2)} + \|\widetilde{v}(0)\|_{H^3} \Big).
$$
We note that condition \eqref{13:38} is equivalent to \eqref{comp cond2} and \eqref{comp cond3}.
We can use the relation $\widetilde{u}=\widetilde{v}+\mc{R}(\widetilde{\ell},\widetilde{\omega})$ and the above estimate of $\widetilde{v}$ to deduce the following estimate of $\widetilde{u}$:
\begin{multline}\label{reg:u}
\|\widetilde{u}\|_{H^2_{\infty}(0,T; H^4,L^2)} \leq C\Big(\|f_2\|_{H^1_{\infty}(0,T; L^2,H^2)} + \|f_3\|_{H^1_{\infty}(0,T)} +\|f_{4}\|_{H^1_{\infty}(0,T)}    + \|u_0\|_{H^3} + |\ell_0| + |\omega_0| \Big).
\end{multline}
Now, with the help of equation \eqref{linearfluid:density} satisfied by $\widetilde{\rho}$, we obtain
\begin{multline}\label{est:fluiddensity}
\|\widetilde{\rho}\|_{H^1_{\infty}(0,T; H^3)} + \|\widetilde{\rho}\|_{W^{1,\infty}(0,T; H^2)} + \|\widetilde{\rho}\|_{H^2(0,T; L^2)}  \leq C\Big(\|f_1\|_{L^2(0,T; H^3)} +\|f_1\|_{L^{\infty}(0,T; H^2)} + \|f_1\|_{H^1(0,T; L^2)} \\+ \|\widetilde{u}\|_{L^2(0,T; H^4)} +\|\widetilde{u}\|_{L^{\infty}(0,T; H^3)} + \|\widetilde{u}\|_{H^1(0,T; H^1)} 
+ \|\widetilde\rho_0\|_{H^3}\Big). 
\end{multline}
Thus, we have proved the existence of solution in appropriate space for the system \eqref{linearfluid:density}-\eqref{linearbody:initial}. Thanks to \eqref{est:body}, \eqref{reg:u} and \eqref{est:fluiddensity}, we have also obtained our required estimate \eqref{est:linearsystemfull}. 
\end{proof} 

\subsection{Estimates of the nonlinear terms}\label{15:29}
For $T>0$ and $R>0$, we define the following subset of $\mc{S}_{T}$:
\begin{equation}\label{ball for fixed point}
\mc{S}_{T,R} = \left\{ (\widetilde{\rho},\widetilde{u},\widetilde{\ell},\widetilde{\omega}) \in \mc{S}_{T} \mid \|  (\widetilde{\rho},\widetilde{u},\widetilde{\ell},\widetilde{\omega}) \|_{\mc{S}_{T}}  \leqslant R \right\}.
\end{equation}
In what follows, $R$ is fixed and the constants that appear can depend on $R$.

Assume $(\widetilde{\rho},\widetilde{u},\widetilde{\ell},\widetilde{\omega}) \in \mc{S}_{T,R}$. Then
there exists a unique solution $(\widetilde{h},Q)\in H^3(0,T)$ of the following equations
\begin{equation}\label{13:43}
\begin{cases}
\widetilde{h}'= Q\widetilde{\ell}\quad\mbox{ in }\quad (0,T),\\
Q' =Q\mathbb{A}(\widetilde{\omega}) \quad\mbox{ in }\quad (0,T),
\\
Q(0)=\mathbb{I}_3,\quad \widetilde{h}(0)= h_0-h_1,\\
\end{cases}
\end{equation}
and we can then define $X$ by \eqref{def:X}.
From \eqref{ball for fixed point}, there exists $C=C(R)>0$ such that
$$
\|Q\|_{H^3(0,T)} \leq C,\quad \|Q-\mathbb{I}_3\|_{L^\infty(0,T)}\leq CT,
$$
\begin{equation}\label{est:h}
\|\widetilde{h}\|_{L^{\infty}(0,T)} \leq |h_0-h_1| + CT^{1/2}.
\end{equation}
In particular, taking $\delta_0$ small enough in \eqref{initial cond req-2}, there exists $T_1=T_1(R, \delta_0, \dist(h_1,\partial \Omega))>0$ 
and $c_1>0$ such that
\begin{equation}\label{12:07}
\dist(\widehat{\mathcal{B}}(\widetilde{h}(t)+h_1),\partial \Omega)\geq c_1>0 \quad \forall t\in [0,T_1].
\end{equation}
From now on, we assume $T\leq T_1$ and the constants may depend on $T_1$.

Combining \eqref{def:X} and \eqref{ball for fixed point}, we also deduce 
\begin{equation}\label{inverse:X}
\|\nabla X - \mathbb{I}_3\|_{L^{\infty}(0,T; H^3)} \leqslant  CT^{1/2}.
\end{equation}
In particular, using the embedding $H^3(\mathcal{F}(0)) \hookrightarrow W^{1,\infty}(\mathcal{F}(0))$ and \eqref{12:07}, there exists $T_2\leq T_1$ such that  
$X : \mc{F}(0) \to \widehat{\mc{F}}(\widetilde{h}(t)+h_1)$ is invertible and its inverse is denoted by $Y$. 

In the same spirit, using the initial condition on $\widetilde{\rho}$ (see \eqref{13:40}), we have
\begin{equation}\label{15:06bis}
\| \widetilde{\rho}+\overline{\rho} - \rho_0\|_{L^{\infty}(0,T;H^3)} \leq T^{1/2}R.
\end{equation}
Using the embedding $H^3(\mathcal{F}(0)) \hookrightarrow L^{\infty}(\mathcal{F}(0))$ and \eqref{initial cond req-2} 
with $\delta_0$ small enough, there exists $T_3\leq T_2$ such that  
\begin{equation}\label{15:06}
\frac {\overline{\rho}}2 \leq \widetilde{\rho}+\overline{\rho} \leq \frac {3\overline{\rho}}2.
\end{equation}
In particular, combining this with \eqref{ball for fixed point}, for any $\alpha\in \mathbb{R}$, 
\begin{equation}\label{rhoalpha}
\|(\widetilde{\rho}+\overline{\rho})^\alpha\|_{L^{\infty}(0,T;H^{3})} \leq C,
\quad
\left\|\int_{\partial \mc{B}(0)} (\widetilde{\rho}+\overline{\rho})^{\gamma} n d\Gamma\right\|_{H^1(0,T)} \leq CT^{1/2}.
\end{equation}

From the above construction and assuming $T\leq T_3$, we can define the terms $F_1, F_2, F_3, F_{4}$ by \eqref{F1}-\eqref{F4}.
To estimate these terms, we first give some estimates of $X$ and $Y$: 
\begin{lem} \label{lem:estX} 
Assume $(\widetilde{\rho},\widetilde{u},\widetilde{\ell},\widetilde{\omega}) \in \mc{S}_{T,R}$.
There exists a positive constant $C$ depending only on $R$, $\mc{F}(0)$ such that, for all $0 < T \leqslant T_{3}$,
\begin{gather} 
\|\nabla Y(X) - \mathbb{I}_3\|_{L^{\infty}(0,T; H^3)} \leq CT^{1/2},\label{X-6}\\
 \left\|\frac{\partial^2 Y_{l}}{\partial x_p\partial x_{i}}(X)\right\|_{L^{\infty}(0,T; H^2)} 
 +
 \left\|\frac{\partial}{\partial t}(\nabla Y(X))\right\|_{L^{\infty}(0,T;H^{2})}
 +
  \left\|\frac{\partial}{\partial t}\left(\frac{\partial^2 Y_{l}}{\partial x_p\partial x_{i}}(X)\right)\right\|_{L^{\infty}(0,T; H^1)} \leq C. \label{X-3}
\end{gather}
\end{lem}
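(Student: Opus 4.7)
The plan is to exploit the fundamental identity $\nabla Y(X) = (\nabla X)^{-1}$ (obtained by differentiating $Y(X(t,y)) = y$ in $y$ and applying the chain rule), which allows us to express the quantities involving $Y \circ X$ entirely in terms of $\nabla X$ and $\nabla^2 X$, both of which have good control directly from the definition \eqref{def:X}.

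First I would treat \eqref{X-6}. Setting $A = \nabla X - \mathbb{I}_3$, estimate \eqref{inverse:X} gives $\|A\|_{L^\infty(0,T;H^3)} \leq CT^{1/2}$. Since $H^3(\mc{F}(0))$ is a Banach algebra in dimension three, for $T \leq T_3$ small enough we may expand
\begin{equation*}
\nabla Y(X) = (\mathbb{I}_3 + A)^{-1} = \sum_{k=0}^{\infty}(-A)^k,
\end{equation*}
and the series converges in $L^\infty(0,T;H^3)$ with $\|\nabla Y(X) - \mathbb{I}_3\|_{L^\infty(0,T;H^3)} \leq 2\|A\|_{L^\infty(0,T;H^3)} \leq CT^{1/2}$. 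This yields \eqref{X-6} and also provides the uniform bound $\|\nabla Y(X)\|_{L^\infty(0,T;H^3)} \leq C$ needed below.

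Next I would derive the estimate on $\partial^2 Y/\partial x_p \partial x_i(X)$. Differentiating the chain-rule identity $\partial_l Y_m(X)\,\partial_j X_l = \delta_{jm}$ in $y_i$ gives
\begin{equation*}
\partial_{pl}^2 Y_m(X)\,\partial_i X_p\, \partial_j X_l = -\partial_l Y_m(X)\,\partial_{ij}^2 X_l.
\end{equation*}
Viewing this as a linear system in the components $\partial_{pl}^2 Y_m(X)$, the coefficient tensor $\partial_i X_p\,\partial_j X_l$ is a perturbation of the identity in $L^\infty(0,T;H^2)$ (by the same Neumann-series argument, since $H^2$ is also a Banach algebra in 3D), so it is invertible. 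Solving and using (i) the bound on $\nabla Y(X)$ from the previous step, (ii) the Banach-algebra estimate in $H^2$, and (iii) the fact that $\nabla^2 X = \int_0^t Q \nabla^2 \widetilde u\,ds$ is bounded in $L^\infty(0,T;H^2)$ because $\widetilde u \in L^\infty(0,T;H^3)$, gives the claim.

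For the two time-derivative estimates, the key observation is $\partial_t (\nabla X) = \nabla(Q\widetilde u) = Q\,\nabla \widetilde u$, which lies in $L^\infty(0,T;H^2)$ by $\|\widetilde u\|_{L^\infty(0,T;H^3)} \leq R$. Then from $\nabla Y(X) = (\nabla X)^{-1}$ we get
\begin{equation*}
\partial_t(\nabla Y(X)) = -(\nabla X)^{-1}\,\partial_t(\nabla X)\,(\nabla X)^{-1},
\end{equation*}
and the product of three factors is bounded in $L^\infty(0,T;H^2)$ by the Banach-algebra property. For the last term, differentiating the identity above in $t$ produces an expression for $\partial_t\bigl(\partial^2_{pl}Y_m(X)\bigr)$ as a linear combination of the already-controlled quantities $\nabla Y(X)$, $\partial_t \nabla Y(X)$, $\nabla^2 Y(X)$, $\nabla^2 X$ and $\partial_t \nabla^2 X = Q\,\nabla^2 \widetilde u \in L^\infty(0,T;H^1)$, so the algebra property of $H^1$ (which holds in 3D only for products involving at least one factor in $H^2$, but each product here contains such a factor) yields the $L^\infty(0,T;H^1)$ bound.

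The main technical obstacle is keeping track of the correct Sobolev indices when multiplying and composing: we do not want to lose derivatives. Working with the purely algebraic identities $\nabla Y(X) = (\nabla X)^{-1}$ and its differentiated form, rather than directly with $Y$, sidesteps the need to estimate $Y$ on the moving domain $\mc{F}(t)$ and reduces everything to multiplicative estimates in $H^3$ and $H^2$ on the fixed domain $\mc{F}(0)$, where the smallness $\|A\|_{L^\infty(0,T;H^3)} \leq CT^{1/2}$ makes inversion of the Neumann series routine.
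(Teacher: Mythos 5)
Your argument is correct and follows essentially the same route as the paper: estimate \eqref{X-6} via the Neumann-series inversion of $\nabla X=\mathbb{I}_3+A$ with $\|A\|_{L^{\infty}(0,T;H^3)}\leq CT^{1/2}$, control $\partial^2 Y(X)$ by solving the differentiated chain-rule identity against the invertible coefficient $\nabla X$, and obtain the time-derivative bounds from $\partial_t(\nabla Y(X))=-\nabla Y(X)\,\partial_t(\nabla X)\,\nabla Y(X)$ together with Banach-algebra estimates in $H^2$ and $H^1$. The only (immaterial) difference is that you differentiate the identity $\nabla Y(X)\nabla X=\mathbb{I}_3$ directly, whereas the paper expands $\partial_{y_m}\bigl(\partial_{x_i}Y_l(X)\bigr)$ by the chain rule and then inverts $\nabla X$; the two computations are equivalent.
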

\begin{proof}
From \eqref{inverse:X} and the fact that $L^{\infty}(0,T; H^3)$ is an algebra, we deduce \eqref{X-6}. This yields in particular that 
\begin{equation}\label{18:04}
\|\nabla Y(X)\|_{L^{\infty}(0,T; H^3)} \leq C.
\end{equation}
Writing 
$$
\frac{\partial}{\partial y_m}\left( \frac{\partial Y_{l}}{\partial x_{i}}(X)\right)
=\sum_p \frac{\partial^2 Y_{l}}{\partial x_p\partial x_{i}}(X)\frac{\partial X_p}{\partial y_m}
$$
and using \eqref{18:04}, we deduce the estimate on $\frac{\partial^2 Y_{l}}{\partial x_p\partial x_{i}}(X)$.

From the expression \eqref{def:X}, we have $\frac{\partial}{\partial t}(\nabla X(t,\cdot))=Q(t)\nabla\widetilde{u}(t,\cdot)$, 
and using 
\begin{equation*}
\frac{\partial}{\partial t}(\nabla Y(X)) = - \nabla Y(X)\frac{\partial}{\partial t}(\nabla X)\nabla Y(X), 
\end{equation*}
we obtain the estimate of the second term in \eqref{X-3}.

Finally, we write
$$
\frac{\partial}{\partial y_m}\left[ \frac{\partial}{\partial t} \left(\frac{\partial Y_{l}}{\partial x_{i}}(X)\right)\right]
=\sum_p \frac{\partial}{\partial t} \left(\frac{\partial^2 Y_{l}}{\partial x_p\partial x_{i}}(X)\right) \frac{\partial X_p}{\partial y_m}
+\sum_p \frac{\partial^2 Y_{l}}{\partial x_p\partial x_{i}}(X) \frac{\partial}{\partial t} \left(\frac{\partial X_p}{\partial y_m}\right)
$$
and from the previous estimate, we have 
$$
\left\| \frac{\partial}{\partial y_m}\left[ \frac{\partial}{\partial t} \left(\frac{\partial Y_{l}}{\partial x_{i}}(X)\right)\right]
\right\|_{L^{\infty}(0,T;H^{1})}
+
\left\| 
\sum_p \frac{\partial^2 Y_{l}}{\partial x_p\partial x_{i}}(X) \frac{\partial}{\partial t} \left(\frac{\partial X_p}{\partial y_m}\right)
\right\|_{L^{\infty}(0,T;H^{1})}\leq C.
$$
Thus, using \eqref{18:04}, we deduce the estimate of the last term in \eqref{X-3}.
\end{proof}

Next we give some properties on $F_1, F_2, F_3, F_{4}$.
\begin{prop} \label{lip:F1}
There exist $\alpha>0$ and a positive constant $C$ depending on $R$, $k_p$, $k_d$, $\overline{\rho}$ and the other physical parameters, and 
on $\mc{F}(0)$ such that, for all $0 < T \leqslant T_3$, 
for all 
$$
(\widetilde{\rho},\widetilde{u}, \widetilde{\ell},\widetilde{\omega}),
(\widetilde{\rho}^{1},\widetilde{u}^{1},\widetilde{\ell}^{1},\widetilde{\omega}^{1}),
(\widetilde{\rho}^{2},\widetilde{u}^{2},\widetilde{\ell}^{2},\widetilde{\omega}^{2})
 \in \mc{S}_{T,R},$$
\begin{gather*}
\|F_{1}(\widetilde{\rho},\widetilde{u},\widetilde{\ell},\widetilde{\omega},Q)\|_{L^{2}(0,T;{H}^{3})\cap L^{\infty}(0,T;{H}^{2}) \cap H^{1}(0,T;{L}^{2})} \leqslant C T^{\alpha},\\
\|F_{2}(\widetilde{\rho},\widetilde{u},\widetilde{\ell},\widetilde{\omega},Q) \|_{H^1_{\infty}(0,T; L^2,H^2)} \leqslant 
C \left(T^{\alpha} + \|\omega_0 \times u_0\|_{H^1} + \|a\gamma\rho_0^{\gamma-2}\nabla \rho_0\|_{H^1}\right),\\
\|{F_{3}}(\widetilde{\rho},\widetilde{u},\widetilde{h},\widetilde{\ell},\widetilde{\omega},Q) \|_{H^{1}_{\infty}(0,T)} 
	\leqslant  C \left(T^{\alpha} +  |\omega_0 \times \ell_0| + |\ell_0| + \|\rho_0-\overline{\rho}\|_{H^1} + \|u_0\|_{H^3}\right),\\
\|{F_{4}}(\widetilde{\rho},\widetilde{u},\widetilde{\ell},\widetilde{\omega},Q) \|_{H^{1}_{\infty}(0,T)} \leqslant C T^{\alpha}, 
\end{gather*}
and
\begin{multline*}
\|{F_{1}}(\widetilde{\rho}^{1},\widetilde{u}^{1},\widetilde{\ell}^{1},\widetilde{\omega}^{1},Q^1) - {F_{1}}(\widetilde{\rho}^{2},\widetilde{u}^{2},\widetilde{\ell}^{2},\widetilde{\omega}^{2},Q^2)\|_{L^{2}(0,T;{H}^{3}) \cap L^{\infty}(0,T;{H}^{2}) \cap H^{1}(0,T;{L}^{2})}\\ \leqslant C T^{\alpha} \|(\widetilde{\rho}^{1},\widetilde{u}^{1},\widetilde{\ell}^{1},\widetilde{\omega}^{1}) - (\widetilde{\rho}^{2},\widetilde{u}^{2},\widetilde{\ell}^{2},\widetilde{\omega}^{2})\|_{\mc{S}_{T}},
\end{multline*}
\begin{equation*}
\|{F_{2}}(\widetilde{\rho}^{1},\widetilde{u}^{1},\widetilde{\ell}^{1},\widetilde{\omega}^{1},Q^1) - {F_{2}}(\widetilde{\rho}^{2},\widetilde{u}^{2},\widetilde{\ell}^{2},\widetilde{\omega}^{2},Q^2)\|_{H^1_{\infty}(0,T; L^2,H^2)}  \leqslant C T^{\alpha} \|(\widetilde{\rho}^{1},\widetilde{u}^{1},\widetilde{\ell}^{1},\widetilde{\omega}^{1}) - (\widetilde{\rho}^{2},\widetilde{u}^{2},\widetilde{\ell}^{2},\widetilde{\omega}^{2})\|_{\mc{S}_{T}},
\end{equation*} 
\begin{equation*}
\|{F_{3}}(\widetilde{\rho}^{1},\widetilde{u}^{1},\widetilde{h}^1,\widetilde{\ell}^{1},\widetilde{\omega}^{1},Q^1) - {F_{3}}(\widetilde{\rho}^{2},\widetilde{u}^{2},\widetilde{h}^2,\widetilde{\ell}^{2},\widetilde{\omega}^{2},Q^2)\|_{H^{1}_{\infty}(0,T)} \\ \leqslant C T^{\alpha} \|(\widetilde{\rho}^{1},\widetilde{u}^{1},\widetilde{\ell}^{1},\widetilde{\omega}^{1}) - (\widetilde{\rho}^{2},\widetilde{u}^{2},\widetilde{\ell}^{2},\widetilde{\omega}^{2})\|_{\mc{S}_{T}},
\end{equation*}
\begin{equation*}
\|{F_{4}}(\widetilde{\rho}^{1},\widetilde{u}^{1},\widetilde{\ell}^{1},\widetilde{\omega}^{1},Q^1) - {F_{4}}(\widetilde{\rho}^{2},\widetilde{u}^{2},\widetilde{\ell}^{2},\widetilde{\omega}^{2},Q^2)\|_{H^{1}_{\infty}(0,T)} \leqslant C T^{\alpha} \|(\widetilde{\rho}^{1},\widetilde{u}^{1},\widetilde{\ell}^{1},\widetilde{\omega}^{1}) - (\widetilde{\rho}^{2},\widetilde{u}^{2},\widetilde{\ell}^{2},\widetilde{\omega}^{2})\|_{\mc{S}_{T}}.
\end{equation*}
where $Q,\ Q^1,\ Q^2, \ \widetilde{h},\ \widetilde{h}^1, \ \widetilde{h}^2 \in H^3(0,T)$ are given by \eqref{13:43}.
\end{prop}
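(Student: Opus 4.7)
Both the size bounds and the Lipschitz bounds for $F_1,\dots,F_4$ come from the same structural observation: each $F_i$ is a finite sum of monomials in the ``atoms''
\[
Q-\mathbb I_3,\ Q^\top,\ \nabla Y(X)-\mathbb I_3,\ \tfrac{\partial^2 Y_l}{\partial x_p\partial x_i}(X),\ (\widetilde\rho+\overline\rho)-\rho_0,\ (\widetilde\rho+\overline\rho)^\alpha,\ \widetilde u,\ \widetilde\ell,\ \widetilde\omega,\ \widetilde h.
\]
Several of these atoms vanish at $t=0$: from \eqref{13:43}, $Q(0)=\mathbb I_3$ and $X(0,\cdot)=\mathrm{id}$, so the atoms $Q-\mathbb I_3$, $\nabla Y(X)-\mathbb I_3$, and $\widetilde\rho+\overline\rho-\rho_0$ all start from $0$. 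For any $A\in H^1(0,T;\mathfrak X)$ with $A(0)=0$ we have $\|A\|_{L^\infty(0,T;\mathfrak X)}\leq T^{1/2}\|A'\|_{L^2(0,T;\mathfrak X)}$, so each such factor contributes a $T^{1/2}$ gain in the $L^\infty_t$ norm while staying bounded in $L^2_t$ and $H^1_t$ by $R$. Any monomial containing at least one vanishing factor is therefore $\mathcal O(T^\alpha)$ with $\alpha=1/2$; the only monomials that survive at $t=0$ are the ones that produce the initial-data terms explicitly written on the right-hand side of each estimate.

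\textbf{Toolkit.} I would use (i) the algebra property of $H^3(\mc F(0))$ and the bilinear product estimate $H^3\cdot H^2\hookrightarrow H^2$, together with $H^3\hookrightarrow W^{1,\infty}$ and $H^2\hookrightarrow L^\infty$; (ii) Lemma~\ref{lem:estX} for the compositions $\nabla Y(X)$ and $\partial^2 Y(X)$ and their time derivatives; (iii) the density bounds \eqref{15:06bis}--\eqref{rhoalpha} and the chain rule applied to $(\widetilde\rho+\overline\rho)^\alpha$; (iv) the trace embedding $H^{3/2}(\partial\mc B(0))\hookleftarrow H^2(\mc F(0))$ to convert interior $H^2$ bounds into $H^1(\partial\mc B(0))$ bounds needed for $F_3,F_4$; and (v) the symmetry of the sphere, namely $(y-h_0)\times n(y)\equiv 0$ on $\partial\mc B(0)$, which makes the pressure integral and the $\lambda$-divergence integral drop out of $F_4$ identically.

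\textbf{Term-by-term bookkeeping.} For $F_1$, every one of the two summands in \eqref{F1} carries a vanishing-at-$0$ factor (either $(\nabla Y(X))Q^\top-\mathbb I_3$ or $\widetilde\rho+\overline\rho-\rho_0$), so the three required norms are all $\mathcal O(T^{\alpha})$. For $F_2$, the surviving-at-$0$ monomials are precisely the convective term $-\widetilde\omega\times\widetilde u$ (whose value at $t=0$ is $-\omega_0\times u_0$) and the pressure-gradient monomial $a\gamma(\widetilde\rho+\overline\rho)^{\gamma-2}\sum_{j,l}Q_{ji}\partial_l\widetilde\rho\,\partial_{x_j}Y_l(X)$ (whose value at $t=0$ is $a\gamma\rho_0^{\gamma-2}\nabla\rho_0$); all other monomials contain a vanishing factor and are $\mathcal O(T^\alpha)$ in $H^1_\infty(0,T;L^2,H^2)$. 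For $F_3$ the surviving-at-$0$ contributions are $-m\widetilde\omega\times\widetilde\ell$, $-k_d\widetilde\ell$, the viscous boundary integral with $Q=\mathbb I_3$ and $\nabla Y(X)=\mathbb I_3$ (bounded by $\|u_0\|_{H^3}$ via trace), and the pressure boundary integral, which after writing $a(\overline\rho+\widetilde\rho)^\gamma=a\overline\rho^\gamma+a\gamma\overline\rho^{\gamma-1}\widetilde\rho+\text{higher}$ and using $\int_{\partial\mc B(0)}n\,d\Gamma=0$ is controlled by $\|\rho_0-\overline\rho\|_{H^1}$; the feedback term $-k_pQ^\top\widetilde h$ is $\mathcal O(T^\alpha)$ because $k_p(0)=0$ under \eqref{hypkp}, and this is why no $|h_1-h_0|$ term appears on the right. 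For $F_4$, the pressure part and the $\lambda$-part vanish pointwise by the identity $(y-h_0)\times n=0$ on $\partial\mc B(0)$, and for the remaining $\mu$-viscous part every monomial still contains either $Q-\mathbb I_3$ or $\nabla Y(X)-\mathbb I_3$ once the identity is extracted, so the entire expression is $\mathcal O(T^\alpha)$.

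\textbf{Lipschitz estimates and main obstacle.} For the differences, I would use the standard multilinear telescoping $\prod a_i^1-\prod a_i^2=\sum_k\bigl(\prod_{i<k}a_i^1\bigr)(a_k^1-a_k^2)\bigl(\prod_{i>k}a_i^2\bigr)$. Each factor $a_k^1-a_k^2$ with $k$ an atom that started from $0$ keeps the $T^{1/2}$ smallness, and each factor $a_k^1-a_k^2$ coming from the solutions is bounded in the $\mc S_T$ norm of the difference; the only complication is the composition atoms $\nabla Y^j(X^j)$, for which I would write $\nabla Y^1(X^1)-\nabla Y^2(X^2)=[\nabla Y^1(X^1)-\nabla Y^1(X^2)]+[\nabla Y^1(X^2)-\nabla Y^2(X^2)]$ and control each piece by the $W^{1,\infty}$ norm of $\nabla Y^j$ times $\|X^1-X^2\|$, the latter again being $\mathcal O(T^{1/2})$ via \eqref{def:X}. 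The main technical obstacle throughout is the tracking of the time derivatives in the $H^1_\infty$ norms, in particular for $F_2$, where one must repeatedly differentiate composition terms $\nabla Y(X)$ in time and bound mixed $W^{1,\infty}_t H^1_y$ and $L^2_t H^2_y$ pieces simultaneously; this is exactly what Lemma~\ref{lem:estX} is designed to handle, so the proof reduces to a careful but mechanical monomial-by-monomial application of the toolkit.
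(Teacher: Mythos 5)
Your overall strategy---extract the identity from $Q$ and $\nabla Y(X)$ and $\rho_0$ from $\widetilde\rho+\overline\rho$, gain $T^{1/2}$ from every factor vanishing at $t=0$, keep the surviving monomials as the initial-data terms, and telescope for the Lipschitz bounds---is exactly the paper's proof, and your observation that $(y-h_0)\times n\equiv 0$ on $\partial\mc{B}(0)$ kills the pressure and divergence contributions to $F_4$ is correct. However, your treatment of the remaining $\mu$-part of $F_4$ contains a false step. After extracting the identities from $Q$ and $\nabla Y(X)$ in \eqref{F4}, the leading monomial is $-\int_{\partial\mc{B}(0)}(y-h_0)\times\bigl[\mu(\nabla\widetilde u+\nabla\widetilde u^{\top})\bigr]n\,d\Gamma$, which contains no vanishing factor; at $t=0$ it equals the viscous torque $-\int_{\partial\mc{B}(0)}(y-h_0)\times 2\mu\mathbb{D}(u_0)\,n\,d\Gamma$, which is generically nonzero (it is what slows a spinning ball, and it depends on normal derivatives of $u_0$, not only on its rigid boundary values), so its $L^\infty(0,T)$ norm cannot be $O(T^\alpha)$. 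The honest output of your scheme for $F_4$ is the same form of bound as for $F_3$, namely $C\bigl(T^\alpha+\|u_0\|_{H^3}\bigr)$; this is also what the paper's own proof actually delivers (it treats $F_4$ ``as $F_3$'', keeping $|F_4(0)|$ on the right), and the extra $\|u_0\|_{H^3}$ is harmless downstream since the same term already appears in the $F_3$ bound and is absorbed into the choice of $R$. You should state that rather than invoke a cancellation that does not exist.

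A second place where monomial bookkeeping alone does not close is the $H^1(0,T)$ seminorm of $F_3$ (and $F_4$): differentiating the viscous boundary integral in time produces $\int_{\partial\mc{B}(0)}Q\,\nabla\tfrac{\partial\widetilde u}{\partial t}\,\nabla Y(X)\,n\,d\Gamma$, whose leading monomial carries no vanishing factor and is only controlled through $\nabla\tfrac{\partial\widetilde u}{\partial t}\in L^2(0,T;H^1)\cap L^\infty(0,T;L^2)$; its $L^2(0,T)$ norm is then bounded by $CR$ but not by $CT^\alpha$, which would ruin the contraction. The paper interpolates to $L^{8/3}(0,T;H^{1/4})$, takes the trace, and gains $T^{1/8}$ by H\"older in time; none of the tools you list (in particular your trace embedding, which needs interior $H^2$ regularity that $\tfrac{\partial\widetilde u}{\partial t}$ does not have uniformly in $t$) produces this. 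Apart from these two points the proposal coincides with the paper's argument, including the use of \cref{lem:estX} for the compositions, the role of $k_p(0)=0$ in \eqref{hypkp} in removing $|h_1-h_0|$ from the $F_3$ bound, and the identification of $-\omega_0\times u_0$ and $a\gamma\rho_0^{\gamma-2}\nabla\rho_0$ as the surviving terms in $F_2$.
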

\begin{proof}
Using the definition \eqref{F1} of $F_1$, \eqref{15:06bis}, \eqref{13:40}, \eqref{ball for fixed point}, \eqref{X-6}
we have the following estimates
\begin{multline*}
\|F_{1}\|_{L^{2}(0,T;{H}^{3})} \leq C\|(\widetilde{\rho}+\overline{\rho})\|_{L^{\infty}(0,T; H^{3})} \|\nabla \widetilde{u}\|_{L^{2}(0,T;{H}^{3})} \|((\nabla Y)Q)^{\top} - \mathbb{I}_3\|_{L^{\infty}(0,T; H^{3})}
\\ 
+C\|(\widetilde{\rho} + \overline{\rho} - \rho_{0})\|_{L^{\infty}(0,T; H^3)}\|\operatorname{div}\widetilde{u}\|_{L^{2}(0,T;{H}^{3})} \leq C T^{\alpha},
\end{multline*}
\begin{multline}\label{time derivative F1}
\left\| \frac{\partial F_{1}}{\partial t} \right\|_{L^{2}(0,T;L^2)} \leq 
C\|(\widetilde{\rho}+\overline{\rho})\|_{L^{\infty}(0,T; H^{3})} 
\left\{ \left\|\nabla \frac{\partial\widetilde{u}}{\partial t} \right\|_{L^{2}(0,T;L^2)} \|((\nabla Y)Q)^{\top} - \mathbb{I}_3\|_{L^{\infty}(0,T; H^{3})}
\right. \\ \left.
+\|\nabla \widetilde{u}\|_{L^{2}(0,T;{H}^{3})} \left\| \frac{\partial}{\partial t}((\nabla Y(X))Q)^{\top} \right\|_{L^{\infty}(0,T;{H}^{2})}
\right\} 
\\ 
+CT^{1/2}\left\| \frac{\partial\widetilde{\rho}}{\partial t} \right\|_{L^{\infty}(0,T;H^2)}  \left\| \nabla\widetilde{u} \right\|_{L^{\infty}(0,T;{H}^{2})}
\left\| ((\nabla Y(X))Q)^{\top}\right\|_{L^{\infty}(0,T; H^{3})}
\\
+C\|\widetilde{\rho} + \overline{\rho} - \rho_{0}\|_{L^{\infty}(0,T;{H}^{3})}\left\|\operatorname{div} \frac{\partial\widetilde{u}}{\partial t}\right\|_{L^{2}(0,T;{H}^{1})}
\leq CT^{\alpha},
\end{multline}
\begin{multline*}
\|F_{1}\|_{L^{\infty}(0,T;{H}^{2})} \leq 
C\|\widetilde{\rho}+\overline{\rho}\|_{L^{\infty}(0,T; H^{3})} \|\nabla \widetilde{u}\|_{L^{\infty}(0,T;{H}^{2})} \|((\nabla Y)Q)^{\top} - \mathbb{I}_3\|_{L^{\infty}(0,T; H^{3})}
\\ 
+ C\|\widetilde{\rho} + \overline{\rho} - \rho_{0}\|_{L^{\infty}(0,T; H^3)}\|\operatorname{div}\widetilde{u}\|_{L^{\infty}(0,T;{H}^{2})} \leq CT^{1/2}.
\end{multline*}

Let us now estimate the $L^{2}(0,T;{H}^{2})$ norm of $F_{2}$. Here we only estimate some terms in \eqref{F2}, the other terms can be estimated similarly.
Using \eqref{rhoalpha}, \eqref{13:40}, \eqref{ball for fixed point}, \eqref{X-6}, \eqref{X-3}, 
\begin{multline*}\label{use:1/f est}
\left\|\frac{1}{\widetilde{\rho}+\overline{\rho}}\frac{\partial^{2}\widetilde{u}_{i}}{\partial y_{m}\partial y_{l}}\left(\frac{\partial Y_{m}}{\partial x_{p}}(X)\frac{\partial Y_{l}}{\partial x_{p}}(X) - \delta_{mp}\delta_{lp}\right)\right\|_{L^{2}(0,T;{H}^{2})} \\ \leq C\left\|\frac{1}{\widetilde{\rho}+\overline{\rho}}\right\|_{L^{\infty}(0,T;H^{3})} \left\|\frac{\partial^{2}\widetilde{u}_{i}}{\partial y_{m}\partial y_{l}}\right\|_{L^{2}(0,T;{H}^{2})}\left\|\frac{\partial Y_{m}}{\partial x_{p}}(X)\frac{\partial Y_{l}}{\partial x_{p}}(X) - \delta_{mp}\delta_{lp}\right\|_{L^{\infty}(0,T;{H}^{2})}\leq CT^{\alpha},
\end{multline*}
\begin{multline*}
\left\|\frac{1}{\widetilde{\rho}+\overline{\rho}} \frac{\partial \widetilde{u}_{i}}{\partial y_{l}}\frac{\partial^2 Y_{l}}{\partial x_{p}^2}(X)\right\|_{L^{2}(0,T;{H}^{2})}
\leq CT^{1/2}\left\|\frac{1}{\widetilde{\rho}+\overline{\rho}}\right\|_{L^{\infty}(0,T; H^{3})}\left\|\frac{\partial \widetilde{u}_{i}}{\partial y_{l}}\right\|_{L^{\infty}(0,T;{H}^{2})}\left\|\frac{\partial^2 Y_{l}}{\partial x_{p}^2}(X)\right\|_{L^{\infty}(0,T; {H}^{2})}
\\
\leq CT^{\alpha},
\end{multline*}
\begin{multline*}
\left\|(\widetilde{\rho}+\overline{\rho})^{\gamma - 2}\frac{\partial \widetilde{\rho}}{\partial y_{l}}\frac{\partial Y_{l}}{\partial x_{j}}(X)\right\|_{L^{2}(0,T;{H}^{2})}
\\
 \leq 
CT^{1/2} \|(\widetilde{\rho}+\overline{\rho})^{\gamma - 2}\|_{L^{\infty}(0,T; H^{2})} \left\|\frac{\partial \widetilde{\rho}}{\partial y_{l}}\right\|_{L^{\infty}(0,T;{H}^{2})}\left\|\frac{\partial Y_{l}}{\partial x_{j}}(X)\right\|_{L^{\infty}(0,T; H^{2})}\leq CT^{\alpha}.
\end{multline*}

For the estimate of the $H^{1}(0,T; L^2)$ norm of $F_{2}$, we also only give the estimates the $L^{2}(0,T; L^2)$ norm of some terms of the time derivative $F_{2}$. 
Again, the other terms can be estimated similarly.
First, we write
\begin{multline*}
\frac{\partial}{\partial{t}}\left[\frac{1}{\widetilde{\rho}+\overline{\rho}}\frac{\partial^{2}\widetilde{u}_{i}}{\partial y_{m}\partial y_{l}}\left(\frac{\partial Y_{m}}{\partial x_{p}}(X)\frac{\partial Y_{l}}{\partial x_{p}}(X) - \delta_{mp}\delta_{lp}\right)\right] \\
= -\frac{1}{(\widetilde{\rho}+\overline{\rho})^{2}}\frac{\partial \widetilde{\rho}}{\partial t}\frac{\partial^{2}\widetilde{u}_{i}}{\partial y_{m}\partial y_{l}}\left(\frac{\partial Y_{m}}{\partial x_{p}}(X)\frac{\partial Y_{l}}{\partial x_{p}}(X) - \delta_{mp}\delta_{lp}\right)\\
  + \left(\frac{1}{\widetilde{\rho}+\overline{\rho}}\right)\frac{\partial^{3}\widetilde{u}_{i}}{\partial t \partial y_{m}\partial y_{l}}\Bigg(\frac{\partial Y_{m}}{\partial x_{p}}(X)\frac{\partial Y_{l}}{\partial x_{p}}(X) 
- \delta_{mp}\delta_{lp}\Bigg) + \left(\frac{1}{\widetilde{\rho}+\overline{\rho}}\right)\frac{\partial^{2}\widetilde{u}_{i}}{\partial y_{m}\partial y_{l}}\frac{\partial}{\partial{t}}\left(\frac{\partial Y_{m}}{\partial x_{p}}(X)\frac{\partial Y_{l}}{\partial x_{p}}(X)\right),
\end{multline*}
\begin{multline*}
\frac{\partial}{\partial{t}}\left[\frac{1}{\widetilde{\rho}+\overline{\rho}} \frac{\partial \widetilde{u}_{i}}{\partial y_{l}}\frac{\partial^2 Y_{l}}{\partial x_{p}^2}(X)\right] = -\frac{1}{(\widetilde{\rho}+\overline{\rho})^2}\frac{\partial\widetilde{\rho}}{\partial t}\frac{\partial \widetilde{u}_{i}}{\partial y_{l}}\frac{\partial^2 Y_{l}}{\partial x_{p}^2}(X) + \frac{1}{\widetilde{\rho}+\overline{\rho}} \frac{\partial^{2} \widetilde{u}_{i}}{\partial t\partial y_{l}}\frac{\partial^2 Y_{l}}{\partial x_{p}^2}(X) 
\\
+ \frac{1}{\widetilde{\rho}+\overline{\rho}} \frac{\partial \widetilde{u}_{i}}{\partial y_{l}}\frac{\partial}{\partial{t}}\left(\frac{\partial^2 Y_{l}}{\partial x_{p}^2}(X)\right).
\end{multline*}
Using \eqref{rhoalpha}, \eqref{13:40}, \eqref{ball for fixed point}, \eqref{X-6}, \eqref{X-3}, we deduce that the above terms is estimated in $L^{2}(0,T; L^2)$  by $CT^{\alpha}$.

Finally, to obtain the $L^{\infty}(0,T; H^1)$ estimate of the term $F_2$, we use the following inequality \cite[Lemma 4.2]{MR3456813}: 
\begin{equation*}
\sup_{t\in (0,T)}\|F_2(t)\|_{H^1}\leq C\Big(\|F_2\|_{L^2(0,T;H^2)} + \|F_2\|_{H^1(0,T; L^2)}+ \|F_2(0)\|_{H^1}\Big),
\end{equation*} 
and since
\begin{equation*}\label{est:F2 initial}
\|F_2(0)\|_{H^1} \leq \|\omega_0 \times u_0\|_{H^1} + \|a\gamma\rho_0^{\gamma-2}\nabla \rho_0\|_{H^1},
\end{equation*}
we deduce the result for $F_2$.

It remains to estimate $F_{3}$ and $F_4$. We only consider $F_3$, the analysis for $F_4$ is the same.
From \eqref{F3}, we can see that the time derivative of $F_3$ involves the following terms (and similar ones)
\begin{multline*}
(\widetilde{\omega}\times \widetilde{\ell})', \quad (k_p Q^{\top}\widetilde{h})', \quad k_d \widetilde{\ell}',\quad
\int\limits_{\partial \mc{B}(0)}\Bigg(Q'\nabla \widetilde{u}\nabla Y(X) + Q\nabla \frac{\partial \widetilde{u}}{\partial t}\nabla Y(X) + Q\nabla \widetilde{u}\frac{\partial}{\partial t}\nabla Y(X)\Bigg)n\ d\Gamma
\\
-a\gamma\int\limits_{\partial \mc{B}(0)} (\overline{\rho}+\widetilde{\rho})^{\gamma-1}\frac{\partial \widetilde{\rho}}{\partial t}n\ d\Gamma.
\end{multline*}
Almost all the terms can be estimated in a direct way in $L^2(0,T)$ by using \eqref{est:h}, \eqref{rhoalpha}, \eqref{13:40}, \eqref{ball for fixed point}, \eqref{X-6}.
We have nevertheless to take care of 
$$
\int\limits_{\partial \mc{B}(0)} Q\nabla \frac{\partial \widetilde{u}}{\partial t}\nabla Y(X) n\ d\Gamma.
$$
For this term, we use standard interpolation result (see, for instance, \cite[Lemma A.5]{Boulakia_2019}) to obtain
$$
\left\| \nabla\frac{\partial \widetilde{u}}{\partial t}  \right\|_{L^{8/3}(0,T;H^{1/4})}
\leq C\left\| \nabla\frac{\partial \widetilde{u}}{\partial t}  \right\|_{L^{\infty}(0,T;L^{2})}^{1/4}
\left\| \nabla\frac{\partial \widetilde{u}}{\partial t}  \right\|_{L^{2}(0,T;H^{1})}^{3/4},
$$
where $C$ is independent of $T$.
Using a trace result and \eqref{13:40}, \eqref{ball for fixed point}, we deduce an estimate of $F_3'$ in $L^2(0,T)$ of the form $CT^{\alpha}$. 
To end the estimate of $F_3$, we use that
\begin{equation*}
\|F_3\|_{L^{\infty}(0,T)} \leq |F_3(0)| + T^{1/2}\|F_3\|_{H^{1}(0,T)}.
\end{equation*}
We have the following estimate:
\begin{equation*}
\left|\int\limits_{\partial \mc{B}(0)} (\overline{\rho}+\widetilde{\rho}(0))^{\gamma}n\ d\Gamma\right|=\left|\int\limits_{\partial \mc{B}(0)} \rho_0^{\gamma}n\ d\Gamma\right|
=\left|\int\limits_{\partial \mc{B}(0)} (\rho_0^{\gamma}-\overline{\rho}^{\gamma})n\ d\Gamma\right| \leq C \int\limits_{\partial \mc{B}(0)} |\rho_0-\overline{\rho}|\ d\Gamma.
\end{equation*}
Thus,
\begin{equation*}
|F_3(0)| \leq C\left( |\omega_0 \times \ell_0| + |\ell_0| + \|\rho_0-\overline{\rho}\|_{H^1} + \|u_0\|_{H^3} \right).
\end{equation*}

The estimates for the differences can be done in a similar way and we thus skip the corresponding proof.
\end{proof}

\subsection{Proof of \cref{local-in-time existence}}

\begin{proof} 
We are going to establish the local in time existence of \eqref{reform:fluiddensity}-\eqref{F4}. In order to do this we use a fixed-point argument. 

Assume $\overline{\rho}> 0$, $\delta_0$ satisfying the smallness assumptions introduced in the above section
and let us consider $(\rho_0,u_0,h_0,\ell_0,\omega_0)$ satisfying \eqref{initial cond req-1}, \eqref{initial cond req-2}. 
Recall that from \eqref{15:06}, we have
$\dfrac{\overline{\rho}}{2}\leq \rho_0 \leq \dfrac{3\overline{\rho}}{2}$ and thus, using Sobolev embeddings, there exists $C_1>0$ depending on $\overline{\rho}, \delta_0$ and the geometry such that
\begin{equation}\label{est:F20}
C \left(\|\omega_0 \times u_0\|_{H^1} + \|a\gamma\rho_0^{\gamma-2}\nabla \rho_0\|_{H^1}
+|\omega_0 \times \ell_0| + |\ell_0| + \|\rho_0-\overline{\rho}\|_{H^1} + \|u_0\|_{H^3}\right)
\leq C_1\widetilde{\delta}_0
\end{equation}
where $C$ is the constant appearing in \cref{lip:F1} and where we have set
\begin{equation*}
\widetilde{\delta}_0=\|\rho_0-\overline{\rho}\|_{H^3}+\|u_0\|_{H^3}+|h_1-h_0|+|\ell_0|+|\omega_0| \leq \delta_0.
\end{equation*}
We now fix $R>0$ as
\begin{equation}\label{radius of ball}
R = 2C_{L} C_1 \widetilde{\delta}_0,
\end{equation}
where $C_L$ is the continuity constant in estimate \eqref{est:linearsystemfull}. We take $T\leq T_3$, where $T_3=T_3(R)$ is the time obtained in the above section.

Let us define the following mapping 
\begin{align}\label{10:12}
\mathcal{N} : \quad &\mc{S}_{T,R} \rightarrow \mc{S}_{T,R}
\\ 
&(\widetilde{\rho}, \widetilde{u}, \widetilde{\ell}, \widetilde{\omega})  \mapsto (\widehat{\rho}, \widehat{u}, \widehat{\ell}, \widehat{\omega}).
\end{align}
For $(\widetilde{\rho}, \widetilde{u}, \widetilde{\ell}, \widetilde{\omega}) \in \mc{S}_{T,R}$, we define $X$ by \eqref{def:X}, $\widetilde{h}$ and $Q$ by \eqref{13:43}
and $F_1,\ F_2,\ F_3,\ F_4$ by \eqref{F1}-\eqref{F4}. Then $(\widehat{\rho}, \widehat{u}, \widehat{\ell}, \widehat{\omega})$ is the solution of 
\begin{align}
\frac{\partial \widehat{\rho}}{\partial t} + \rho_{0}\operatorname{div}\widehat{u} &= F_{1}(\widetilde{\rho}, \widetilde{u}, \widetilde{\ell}, \widetilde{\omega},Q)   \quad \mbox{ in }(0,T)\times \mc{F}(0), \label{fixedpoint:fluiddensity}\\
\frac{\partial \widehat{u}}{\partial t} - \frac{\mu}{\rho_{0}}\Delta \widehat{u} - \frac{\lambda+\mu}{\rho_{0}}\nabla \left(\operatorname{div} \widehat{u}\right)&=F_{2}(\widetilde{\rho}, \widetilde{u}, \widetilde{\ell}, \widetilde{\omega},Q)  \quad\mbox{ in }\quad (0,T)\times \mc{F}(0), \label{fixedpoint:fluidvelocity} \\ 
m\widehat{\ell}'  &=F_{3}(\widetilde{\rho}, \widetilde{u}, \widetilde{h}, \widetilde{\ell}, \widetilde{\omega},Q) \quad\mbox{ in }\quad (0,T),  \label{fixedpoint:rigidlinear}\\
J\widehat{\omega}' &=F_{4}(\widetilde{\rho}, \widetilde{u}, \widetilde{\ell}, \widetilde{\omega},Q) \quad\mbox{ in }\quad (0,T) \label{fixedpoint:rigidangular},
\end{align}
\begin{align}
\widehat{u}&=\widehat{\ell}+\widehat{\omega}\times (y-h_0) \quad\mbox{ on }\quad (0,T)\times \partial \mc{B}(0), \label{fixedpointboundary1:vel}\\
\widehat{u}&=0 \quad\mbox{ in }\quad (0,T)\times \partial \Omega. \label{fixedpointboundary2:vel}
\end{align}
\begin{align}
\widehat{\rho}(0,\cdot)&=\rho_0(\cdot) - \overline{\rho},\quad \widehat{u}(0,\cdot)=u_0(\cdot) \quad\mbox{ in }\quad \mc{F}(0), \label{fixedpointinitialcond:densityvel}\\
 \widehat{\ell}(0) &=\ell_{0}, \quad \widehat{\omega}(0) =\omega_0.\label{fixedpointbody:initial} 
\end{align}
In order to show that $\mathcal{N}$ is well-defined, we apply \cref{linear system:existence} to the above system.
First we note that \eqref{finalcompatibility-1}--\eqref{finalcompatibility-3} yield the compatibility conditions \eqref{comp cond1}--\eqref{comp cond3}. More precisely, the first condition is exactly condition \eqref{finalcompatibility-1}. 
Using the expression of $F_2$ in \eqref{F2}, we have
$$
\left[F_{2}(\widetilde{\rho},\widetilde{u},\widetilde{\ell},\widetilde{\omega},Q)\right](0,\cdot)
=
- \omega_0\times {u}_0 
+  \frac{1}{\rho_0} \nabla p_0,
$$
where $p_0=a\rho_0^\gamma$.
Thus, \eqref{finalcompatibility-2} yields the second condition.

On the other hand, using the expressions of $F_3$ and $F_4$ in \eqref{F3} and \eqref{F4}, we have
$$
\left[F_{3}(\widetilde{\rho},\widetilde{u},\widetilde{\ell},\widetilde{\omega},Q)\right](0,\cdot)
=-m({\omega}_0\times {\ell}_0) -\int\limits_{\partial \mc{B}(0)} \sigma(u_0,p_0) n\, d\Gamma - k_d\ell_0,
$$
$$
\left[F_{4}(\widetilde{\rho},\widetilde{u},\widetilde{\ell},\widetilde{\omega},Q)\right](0,\cdot)
= -\int\limits_{\partial \mc{B}(0)} (y-h_0) \times \sigma(u_0,p_0) n\, d\Gamma.
$$
These expressions of $F_3(0,\cdot)$ and $F_4(0,\cdot)$ show that \eqref{finalcompatibility-3} gives the third condition \eqref{comp cond3}.
We thus deduce from \cref{linear system:existence} the existence and uniqueness of 
$(\widehat{\rho},\widehat{u},\widehat{\ell},\widehat{\omega})\in\mc{S}_T$.
Combining \eqref{est:linearsystemfull}, \cref{lip:F1}, \eqref{est:F20} and \eqref{radius of ball}, we obtain
$$
\|(\widehat{\rho}, \widehat{u}, \widehat{\ell}, \widehat{\omega}) \|_{\mc{S}_{T}} 
\leqslant \frac{R}{2} + C T^{\alpha}.
$$
In particular, taking $T$ small enough, we deduce that $\mc{N}$ is well defined. 

Next we show that $\mathcal{N}$ is a contraction. 
Let $(\widetilde{\rho}^1, \widetilde{u}^1, \widetilde{\ell}^1, \widetilde{\omega}^1)$, 
$(\widetilde{\rho}^2, \widetilde{u}^2, \widetilde{\ell}^2, \widetilde{\omega}^2) \in \mc{S}_{T,R}.$ 
For $j=1,2,$ we set $\mathcal{N}(\widetilde{\rho}^j, \widetilde{u}^j, \widetilde{\ell}^j, \widetilde{\omega}^j):= (\widehat{\rho}^j, \widehat{u}^j, \widehat{\ell}^j, \widehat{\omega}^j).$ 
Using \cref{linear system:existence} and \cref{lip:F1}, we obtain 
\begin{equation*}
\| (\widehat{\rho}^1, \widehat{u}^1, \widehat{\ell}^1, \widehat{\omega}^1) - (\widehat{\rho}^2, \widehat{u}^2, \widehat{\ell}^2, \widehat{\omega}^2)\|_{\mc{S}_{T}}  \leqslant  C  T^{\alpha} \| (\widetilde{\rho}^1, \widetilde{u}^1,  \widetilde{\ell}^1, \widetilde{\omega}^1) - (\widetilde{\rho}^2, \widetilde{u}^2, \widetilde{\ell}^2, \widetilde{\omega}^2)\|_{\mc{S}_{T}}.
\end{equation*}Thus $\mathcal{N}$ is a contraction in $\mc{S}_{T,R}$ for $T$ small enough.

Finally, using \eqref{radius of ball} and \eqref{13:43}, we deduce 
\begin{multline*}
\|(\widetilde{\rho}, \widetilde{u}, \widetilde{\ell}, \widetilde{\omega}) \|_{\mc{S}_{T}} + \|h_1-h\|_{L^{\infty}(0,T)} \leqslant C \widetilde{\delta}_0=C\Big(\|\rho_0-\overline{\rho}\|_{H^3}+\|u_0\|_{H^3}+|h_1-h_0|+|\ell_0|+|\omega_0|\Big)
\end{multline*}
that yields \eqref{final local estimate}.
\end{proof}

\section{Global in time existence of solutions}\label{sec:Global in time existence of solution}

\subsection{A priori estimates}
We have already established a local-in-time existence result in \cref{local-in-time existence}. 
In order to obtain the global in time existence of the solutions, we need an appropriate a priori estimates. 
We recall that $\|\cdot\|_{\widehat{\mc{S}}_{0,T}}$ is introduced in \eqref{notation:norm}. We also introduce the following notation
to shorten the notation: for $Z=L^p$ or  $Z=W^{k,p}$, we set:
\begin{align*}
W^{k,\infty}_{T}(Z)=W^{k,\infty}(0,T; Z(\mc{F}(t))),\quad H^{k}_{T}(Z)=H^{k}(0,T; Z(\mc{F}(t))),\quad \mbox{for}\quad k=1,2,\\
W^{0,\infty}_{T}(Z)=L^{\infty}_{T}(Z)=L^{\infty}(0,T; Z(\mc{F}(t))),\quad H^{0}_{T}(Z)=L^2_{T}(Z)=L^2(0,T; Z(\mc{F}(t))).
\end{align*}

The main tool to prove the global in time existence of the solutions is the following proposition:
\begin{prop}\label{aprori est}
Let $h_1 \in \Omega^0$ and $\overline{\rho} >0$. 
Assume the feedback law \eqref{feedback} with $(k_p,k_d)$ satisfying \eqref{hypkp}. 
There exist $\varepsilon_0,C_0>0$ with $\varepsilon_0\leq \delta_0$ such that
if $(\rho, u, h, \ell, \omega)$ is a solution of system \eqref{continuity}--\eqref{feedback} with
\begin{equation}\label{smallness cond:apriori}
\|(\rho,u,\ell,\omega)\|_{\widehat{\mc{S}}_{0,T}} 
\leq  \varepsilon_0, 
\end{equation}
then the following estimate holds:
  \begin{equation}\label{est:apriori}
\|(\rho,u,\ell,\omega)\|_{\widehat{\mc{S}}_{0,T}} + \|\sqrt{k_p}(h_1-h)\|_{L^{\infty}(0,T)} \leq
C_{0}\Big(\|(\rho_0,u_0,\ell_0,\omega_0)\|_{\widehat{\mc{S}}_{0,0}}+|h_1-h_0|\Big).
\end{equation}
\end{prop}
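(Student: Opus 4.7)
The plan is to combine a dissipative closed-loop energy identity with higher-order regularity estimates imported from the linear theory of Section~2, and then close the a priori bound using the smallness hypothesis \eqref{smallness cond:apriori}.

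First, I would derive the natural energy identity in the moving frame. Multiplying \eqref{momentum} by $u$, integrating over $\mc{F}(t)$, using Reynolds' transport theorem together with \eqref{continuity} and the boundary conditions \eqref{boundary f}--\eqref{boundary fs}, and combining with \eqref{linear:body}--\eqref{angular:body} tested by $\ell$ and $\omega$, I would arrive, with $\Pi(\rho):=\tfrac{a}{\gamma-1}\bigl(\rho^\gamma-\overline\rho^\gamma-\gamma\overline\rho^{\gamma-1}(\rho-\overline\rho)\bigr)\geq 0$, at
\begin{multline*}
\frac{d}{dt}\Bigl[\int_{\mc{F}(t)}\bigl(\tfrac{1}{2}\rho|u|^2+\Pi(\rho)\bigr)+\tfrac{m}{2}|\ell|^2+\tfrac{1}{2}J\omega\cdot\omega+\tfrac{k_p}{2}|h_1-h|^2\Bigr]\\
+\int_{\mc{F}(t)}\bigl(2\mu|\mathbb{D}(u)|^2+\lambda(\div u)^2\bigr)+k_d|\ell|^2=\tfrac{1}{2}k_p'|h_1-h|^2.
\end{multline*}
The hypothesis \eqref{hypkp} is tuned exactly to absorb the right-hand side: since $k_p'\equiv 0$ on $[T_I,\infty)$ and $|h_1-h(t)|^2\leq 2|h_1-h_0|^2+2T_I\int_0^{T_I}|\ell|^2\,ds$ for $t\leq T_I$, the constraint $k_p'<k_d/(2T_I^2)$ gives $\int_0^T \tfrac{1}{2}k_p'|h_1-h|^2\,ds\leq \tfrac{k_d}{2T_I}|h_1-h_0|^2+\tfrac{k_d}{2}\int_0^{T_I}|\ell|^2\,ds$, half of which is absorbed into the dissipation $k_d|\ell|^2$. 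This furnishes $L^\infty_t$ control of $\sqrt{k_p}(h_1-h)$, $\ell$, $\omega$, $\rho-\overline\rho$ in $L^2$, and $L^2_t$ control of $\nabla u$, uniformly in $T$ and bounded by $\|(\rho_0,u_0,\ell_0,\omega_0)\|_{\widehat{\mc{S}}_{0,0}}+|h_1-h_0|$.

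For the higher-order part of $\|(\rho,u,\ell,\omega)\|_{\widehat{\mc{S}}_{0,T}}$, I would pass to the Lagrangian coordinates of Section~2.1 and view the system as \eqref{linearfluid:density}--\eqref{linearbody:initial} driven by the sources $F_1,\dots,F_4$ defined in \eqref{F1}--\eqref{F4}. Applying \cref{linear system:existence} together with the nonlinear bounds of \cref{lip:F1} on a window $(0,T')$ with $T'\leq T_*$, I would obtain a self-improving inequality of the form
\begin{equation*}
\|(\widetilde\rho,\widetilde u,\widetilde\ell,\widetilde\omega)\|_{\mc{S}_{T'}}\leq C_L\bigl(\|(\rho_0,u_0,\ell_0,\omega_0)\|_{\widehat{\mc{S}}_{0,0}}+|h_1-h_0|\bigr)+C_L (T')^{\alpha}\bigl(1+\|(\widetilde\rho,\widetilde u,\widetilde\ell,\widetilde\omega)\|_{\mc{S}_{T'}}\bigr)\|(\widetilde\rho,\widetilde u,\widetilde\ell,\widetilde\omega)\|_{\mc{S}_{T'}},
\end{equation*}
whose superlinear part is absorbed thanks to \eqref{smallness cond:apriori}. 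Since $C_L$ depends on the window length, I would then cover $[0,T]$ by a partition into windows of fixed length $T_*$, apply the above bound on each window, and chain the estimates, using the low-order dissipation produced in the previous paragraph to supply the data for each successive window. On $[T_I,\infty)$, where $k_p\equiv 1$, the closed-loop linearization has a strictly dissipative structure that prevents the chained estimates from growing and yields a $T$-uniform bound.

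The main obstacle I expect is precisely this chaining step, which has to couple the \emph{$T$-dependent} higher-order perturbative bound with the \emph{$T$-uniform} low-order dissipation. The most delicate piece is the $H^3$ density estimate: it is controlled only through the transport equation $\partial_t\widetilde\rho+\rho_0\div\widetilde u=F_1$, and differentiating in time brings back $\partial_t\widetilde\rho$ via $\partial_tF_1$, creating a circular dependence that must be broken by a Gronwall bootstrap relying crucially on \eqref{smallness cond:apriori}. Pairing this bootstrap with the dissipative energy identity on $[T_I,\infty)$ is what ultimately produces the $T$-independent constant $C_0$ in \eqref{est:apriori}, and explains why the structural assumptions on $k_p,k_d$ in \eqref{hypkp} cannot be dropped.
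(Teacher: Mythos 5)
Your low-order energy identity and the treatment of the feedback term are essentially the paper's argument: the paper's Lemma on time regularity (case $k=0$) multiplies the equations by $p^{*}\rho^{*}/\overline{\rho}$, $u$, $\ell$, $\omega$, obtains the $\tfrac{k_p}{2}|h_1-h|^2$ term in the energy and the $\tfrac12 k_p'|h_1-h|^2$ term on the right, and absorbs the latter via $|h_1-h(s)|^2\leq 2|h_1-h_0|^2+2T_I\int_0^{T_I}|\ell|^2$ together with $k_p'<k_d/(2T_I^2)$, exactly as you propose (the paper works with the quadratic approximation $\tfrac{p^*}{2\overline\rho}|\rho^*|^2$ of the pressure potential rather than the exact relative entropy $\Pi(\rho)$, putting the remainder into the source terms, but this is immaterial).

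The higher-order part of your argument has a genuine gap. You propose to control the $\mc{S}_{T'}$ norm window by window via \cref{linear system:existence} and \cref{lip:F1} and then chain over a partition of $[0,T]$ into intervals of length $T_*$. The constant $C_L$ in \eqref{est:linearsystemfull} is a fixed constant $>1$ per window, so chaining $N=T/T_*$ windows produces a factor of order $C_L^{N}$ unless the data fed into each successive window decays. But the ``data'' for window $j$ is the full $H^3\times H^3\times\mathbb{R}^3\times\mathbb{R}^3$ norm of $(\rho(jT_*)-\overline\rho,u(jT_*),\ell(jT_*),\omega(jT_*))$, and the low-order dissipation from your first paragraph controls only $L^\infty_t(L^2_x)$ of $\rho-\overline\rho$ and $L^2_t(H^1_x)$ of $u$; it gives no decay (indeed no control at all) of the $H^3$ norms at the window endpoints. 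Your appeal to a ``strictly dissipative structure'' of the closed-loop linearization on $[T_I,\infty)$ is unsubstantiated: no exponential decay in $H^3$ is established anywhere in the paper (the transported density has no smoothing, and the asymptotic result \cref{asymptotic behavior} is proved by a weak-compactness argument precisely because no decay rate is available). Note also that this chaining is exactly the mechanism used in the proof of \cref{global existence}, where it works only because \cref{aprori est} is invoked after each window to reset the constant to $C_0$; using it to prove \cref{aprori est} itself is circular. The paper's actual route is different: it stays in the moving domain, writes the system for $\rho^*=\rho-\overline\rho$ with quadratic sources $f_0,\dots,f_3$ bounded by $\|(\rho,u,\ell,\omega)\|_{\widehat{\mc{S}}_{0,T}}^2$, derives $T$-uniform energy estimates for the system and for its time derivative (Lemmas \ref{time reg1} and \ref{time reg2}, which is where the feedback enters), combines them with the $T$-independent elliptic estimates of \cite[Section 4]{Muriel-Sergio-IHP} to reach the single inequality \eqref{required estimate} of the form $\mathrm{norm}^2\leq C(\mathrm{data}^2+\mathrm{norm}^3+\mathrm{norm}^4)$ with $C$ independent of $T$, and only then closes with the smallness hypothesis \eqref{smallness cond:apriori}. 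Your Gronwall bootstrap for the density transport equation does not appear and is not needed in that scheme.
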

\begin{proof}
The proof follows closely the idea of \cite[Proposition 8]{Muriel-Sergio-IHP}. We only repeat some parts of the proof to estimate $(h_1-h)$. 
We define 
\begin{equation*}
\rho^*(t,x) = \rho(t,x)-\overline{\rho}
\end{equation*}
and we rewrite \eqref{continuity}-\eqref{initial} as follows
\begin{equation}\label{full system:time dependent domain}
\left\{
        \begin{array}{ll}
        \displaystyle
        \frac{\partial \rho^*}{\partial t}+ u\cdot \nabla \rho^* + \overline{\rho}\operatorname{div}u=f_0(\rho^*,u,h,\omega) \quad & t \in (0,T), \, x\in\mc{F}(t),\\\displaystyle
\dfrac{\partial u}{\partial t} - \operatorname{div}\sigma^{*}(u,\rho^*)=f_1(\rho^*,u,h,\omega)\quad & t \in (0,T), \, x\in\mc{F}(t),\\
\overline{m}\ell'= -\displaystyle\int\limits_{\partial \mc{B}(t)} \sigma^{*}(u,\rho^*)N\, d\Gamma + \overline{k_p}(h_1-h(t)) - \overline{k_d} \ell(t) + f_2(\rho^*,u,h,\omega)&  t \in (0,T),\\ 
\overline{J}\omega'=-\displaystyle\int\limits_{\partial \mc{B}(t)} (x-h) \times \sigma^{*}(u,\rho^*)N\, d\Gamma + f_3(\rho^*,u,h,\omega) &  t \in (0,T),\\
h'=\ell & t \in (0,T),\\
u(t,x)=0,\quad &  t \in (0,T),\, x \, \in \partial \Omega, \\
u(t,x)= \ell(t)+\omega(t) \times (x-h(t)), \quad &  t \in (0,T),\, x \, \in \partial \mc{B}(t),\\
\rho^*(0,\cdot)=\rho_0-\overline{\rho}, \quad u(0,\cdot)=u_0 \quad \mbox{in}\quad \mc{F}(0),\\
h(0)=h_0,\quad \ell(0)=\ell_0,\quad \omega(0)=\omega_0,
        \end{array}
        \right.
\end{equation}
In the above system \eqref{full system:time dependent domain} 
$$
\overline{m}=\frac{m}{\overline{\rho}}, \quad
\overline{J}=\frac{J}{\overline{\rho}}, \quad
\overline{k_p}=\frac{k_p}{\overline{\rho}}, \quad
\overline{k_d}=\frac{k_d}{\overline{\rho}}, \quad
\overline{\mu}=\frac{\mu}{\overline{\rho}}, \quad
\overline{\lambda}=\frac{\lambda}{\overline{\rho}}, 
$$
\begin{equation*}
\sigma^{*}(u,\rho^*)= 2\overline{\mu} \mathbb{D}(u) + \overline{\lambda} \div u \mathbb{I}_3-p^{*}\rho^*\mathbb{I}_3,\quad 
p^{*}=a\gamma\overline{\rho}^{\gamma-2},
\end{equation*}
and
\begin{equation*}
\left\{
        \begin{array}{ll}
        \displaystyle      
f_0(\rho^*,u,h,\omega)&= -\rho^*\operatorname{div}u,\\
f_1(\rho^*,u,h,\omega)&= -\left(u\cdot \nabla\right)u-\left(\dfrac{1}{\overline{\rho}}-\dfrac{1}{\rho^*+\overline{\rho}}\right)\operatorname{div}\left(2\mu \mathbb{D}(u) + \lambda \operatorname{div}u\mathbb{I}_3\right) 
\\
&\quad+ \left(p^{*} - a\gamma(\rho^*+\overline{\rho})^{\gamma-2}\right)\nabla \rho^*,\\
f_2(\rho^*,u,h,\omega)&= -\displaystyle\int\limits_{\partial \mc{B}(t)}\left(p^{*}\rho^* - \frac{a(\rho^*+\overline{\rho})^{\gamma}}{\overline{\rho}}\right)N\,d\Gamma,\\
f_3(\rho^*,u,h,\omega)&= - \displaystyle\int\limits_{\partial \mc{B}(t)}(x-h) \times \left(\left(p^{*}\rho^* - \frac{a(\rho^*+\overline{\rho})^{\gamma}}{\overline{\rho}}\right)N\right)\,d\Gamma.      
        \end{array}
        \right.
\end{equation*}

We take $\varepsilon_0$ small enough in \eqref{smallness cond:apriori} so that 
\begin{equation*}
\rho^* + \overline{\rho} \geq \frac{\overline{\rho}}{2}.
\end{equation*}  

After some calculations (that we skipped here), we obtain
\begin{multline*}
\|f_0\|^2_{L^2_{T}(H^3)} + \|f_0\|^2_{L^{\infty}_{T}(H^2)}+\|f_0\|^2_{H^1_{T}(L^2)}+\|f_1\|^2_{L^2_{T}(H^2)} + \|f_1\|^2_{L^{\infty}_{T}(H^1)}+\|f_1\|^2_{H^1_{T}(L^2)}\\+\|f_2\|^2_{H^1(0,T)}+\|f_3\|^2_{H^1(0,T)} \leq C\|(\rho,u,\ell,\omega)\|_{\widehat{\mc{S}}_{0,T}}^4,
\end{multline*}
and
\begin{multline*}
\left\|\dfrac{\partial \rho^*}{\partial t}(0,\cdot)\right\|^2_{L^2}+\left\|\dfrac{\partial u}{\partial t}(0,\cdot)\right\|^2_{H^1}+|\ell'(0)|^2
+|\omega'(0)|^2
\\
\leq
C\left(
\|\rho_0-\overline{\rho}\|^2_{H^2}+\|u_0\|^2_{H^3} + |\ell_0|^2+|h_0-h_1|^2
+ \|f_0\|^2_{L^{\infty}_{T}(L^2)}
+ \|f_1\|^2_{L^{\infty}_{T}(H^1)}
 + \|f_2\|^2_{L^{\infty}_{T}}
+ \|f_3\|^2_{L^{\infty}_{T}}
\right).
 \end{multline*}

In particular, if we can show
\begin{multline}\label{required estimate}
\|(\rho,u,\ell,\omega)\|_{\widehat{\mc{S}}_{0,T}}^2 + \|\sqrt{k_p}(h_1-h)\|_{L^{\infty}(0,T)}^2 \leq C\Big(\|f_0\|^2_{L^2_{T}(H^3)} + \|f_0\|^2_{L^{\infty}_{T}(H^2)}+\|f_0\|^2_{H^1_{T}(L^2)}+\|f_1\|^2_{L^2_{T}(H^2)}\\
 + \|f_1\|^2_{L^{\infty}_{T}(H^1)}+\|f_1\|^2_{H^1_{T}(L^2)}+\|f_2\|^2_{H^1(0,T)}+\|f_3\|^2_{H^1(0,T)}+ \left\|\dfrac{\partial \rho^*}{\partial t}(0,\cdot)\right\|^2_{L^2}+\|\rho_0-\overline{\rho}\|^2_{H^3} +\left\|\dfrac{\partial u}{\partial t}(0,\cdot)\right\|^2_{H^1}\\
+\|u_0\|^2_{H^3(\mc{F}(0))}
+|h_0-h_1|^2 + |\ell'(0)|^2+|\ell_0|^2+|\omega'(0)|^2+|\omega_0|^2+\|(\rho,u,\ell,\omega)\|_{\widehat{\mc{S}}_{0,T}}^3 \\
+\|(\rho,u,\ell,\omega)\|_{\widehat{\mc{S}}_{0,T}}^4 
\Big),
\end{multline}
then
 \begin{multline*}
 \|(\rho,u,\ell,\omega)\|_{\widehat{\mc{S}}_{0,T}}^2 + \|\sqrt{k_p}(h_1-h)\|_{L^{\infty}(0,T)}^2 \leq C\Big(\|(\rho,u,\ell,\omega)\|_{\widehat{\mc{S}}_{0,T}}^3 
 +\|(\rho,u,\ell,\omega)\|_{\widehat{\mc{S}}_{0,T}}^4 
 \\
 +\|\rho_0-\overline{\rho}\|^2_{H^3}+\|u_0\|^2_{H^3}+|h_0-h_1|^2+ |\ell_0 |^2+|\omega_0|^2\Big).
 \end{multline*}
The condition \eqref{smallness cond:apriori} with $\varepsilon_0$ small enough combined with the above relation yields 
\eqref{est:apriori}. The proof of \eqref{required estimate} is done below.
\end{proof}

The proof of \eqref{required estimate} (that is necessary to finish the proof of \cref{aprori est})
is done in a precise way in \cite[Section 4]{Muriel-Sergio-IHP} in the case $k_p=0$ and $k_d=0$. 
The presence of the corresponding terms only changes the two lemmas on time regularity (Lemma 13 and Lemma 14 in \cite{Muriel-Sergio-IHP}). 
Here we state these two lemmas in our case and give the idea of their proofs with a particular attention to the feedback term.
Then using these two lemmas and the elliptic results \cite[Section 4]{Muriel-Sergio-IHP}, we can deduce \eqref{required estimate} and thus end the proof of \cref{aprori est}.
\begin{lem}\label{time reg1}
Let k=0,1. For every $\varepsilon > 0$, there exists a constant $C>0$ such that 
\begin{multline}\label{sup-norm-estimate}
\|\rho^*\|_{W^{k,\infty}_{T}(L^2)} + \|u\|_{H^k_{T}(H^1)}  + \|u\|_{{W^{k,\infty}_{T}(L^2)}} + \|\ell\|_{W^{k,\infty}(0,T)} + \|\ell\|_{H^k(0,T)} +\|\omega\|_{W^{k,\infty}(0,T)} \\
+ \|\sqrt{k_p}(h_1-h)\|_{L^{\infty}(0,T)}
 \leq \varepsilon\Big(\|\rho^*\|_{H^k_{T}(L^2)}+\|\ell\|_{H^k(0,T)} + \|\omega\|_{H^k(0,T)} \Big) \\
 + C \Big(\|f_0\|_{H^k_{T}(L^2)}+ \|f_1\|_{H^k_{T}(L^2)}+\|f_2\|_{H^k(0,T)}
	 +\|f_3\|_{H^k(0,T)}
	 \\
	 +\|\rho_0-\overline{\rho}\|_{L^2}+\|u_0\|_{L^2}+|h_1-h_0| + |\ell_0 |+|\omega_0| \\
 +\left\|\frac{\partial\rho^*}{\partial t}(0,\cdot)\right\|_{L^2}+\left\|\frac{\partial u}{\partial t}(0,\cdot)\right\|_{L^2}
+|\ell'(0) |+|\omega'(0)|+\|(\rho,u,\ell,\omega)\|_{\widehat{\mc{S}}_{0,T}}^{3/2} + \|(\rho,u,\ell,\omega)\|_{\widehat{\mc{S}}_{0,T}}^2 \Big).
\end{multline}
\end{lem}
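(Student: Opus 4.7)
The proof adapts the Matsumura–Nishida type energy estimate used in \cite{Muriel-Sergio-IHP}, modified to account for the feedback forcing $\overline{k_p}(h_1-h)-\overline{k_d}\ell$ in the linear momentum equation and to produce the bound on $\|\sqrt{k_p}(h_1-h)\|_{L^\infty(0,T)}$.

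\emph{Case $k=0$.} I would multiply the density equation by $p^*\rho^*/\overline{\rho}$, the momentum equation by $u$, and integrate over the moving domain $\mc{F}(t)$, using Reynolds' transport theorem and the no-slip and rigid-motion boundary conditions to combine the boundary integrals with the body equations (multiplied respectively by $\ell$ and $\omega$). This produces the standard identity
\begin{equation*}
\frac{d}{dt}\left[\int_{\mc{F}(t)}\!\!\left(\tfrac{p^*}{2\overline{\rho}}|\rho^*|^2+\tfrac{1}{2}|u|^2\right)dx+\tfrac{\overline{m}}{2}|\ell|^2+\tfrac{\overline{J}}{2}|\omega|^2\right]+\int_{\mc{F}(t)}\!\!\left(2\overline{\mu}|\mathbb{D}(u)|^2+\overline{\lambda}|\div u|^2\right)dx = R_0(t) + \overline{k_p}(h_1-h)\cdot\ell - \overline{k_d}|\ell|^2,
\end{equation*}
where $R_0(t)$ gathers the contributions of $f_0,f_1,f_2,f_3$. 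Using $h'=\ell$ and the time-dependence of $k_p$, I rewrite the PD-contribution as
\begin{equation*}
\overline{k_p}(h_1-h)\cdot\ell = -\frac{d}{dt}\!\left(\tfrac{\overline{k_p}}{2}|h_1-h|^2\right) + \tfrac{\overline{k_p}'}{2}|h_1-h|^2,
\end{equation*}
so $\tfrac{\overline{k_p}}{2}|h_1-h|^2$ joins the Lyapunov energy on the left, and $-\overline{k_d}|\ell|^2$ enters the dissipation.

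\emph{Absorbing the $k_p'$–defect.} The error $\tfrac{\overline{k_p}'}{2}|h_1-h|^2$ is the delicate term. Since $k_p\equiv 1$ on $[T_I,\infty)$ this error is supported in $[0,T_I]$, and since $h(t)=h_0+\int_0^t\ell$, I bound
\begin{equation*}
|h_1-h(t)|^2\leq 2|h_1-h_0|^2+2t\,\|\ell\|_{L^2(0,t)}^2\qquad(t\in[0,T_I]).
\end{equation*}
Combining with the hypothesis $k_p'<k_d/(2T_I^2)$ and integrating in time yields
\begin{equation*}
\int_0^T\tfrac{\overline{k_p}'}{2}|h_1-h|^2\,ds \leq \tfrac{\overline{k_d}}{2T_I}|h_1-h_0|^2 + \tfrac{\overline{k_d}}{2}\|\ell\|_{L^2(0,T)}^2,
\end{equation*}
so that the last term is absorbed by half of the dissipation $\int_0^T\overline{k_d}|\ell|^2\,ds$, while the first term is admissible initial data. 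Integrating the identity on $[0,T]$, using $k_p(0)=0$, Korn's inequality and the cubic/quartic bounds for the $f_i$'s furnished by the nonlinear analysis of Section \ref{15:29}, I recover the $k=0$ statement of \eqref{sup-norm-estimate}, including the $\sqrt{k_p}(h_1-h)$ term.

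\emph{Case $k=1$.} I differentiate \eqref{full system:time dependent domain} in time and run the same energy argument on $(\partial_t\rho^*,\partial_t u,\ell',\omega')$, being careful about the domain $\mc{F}(t)$ moving (which produces harmless commutator terms controlled by $\|u\|_{L^\infty_T(H^3)}$, hence by $\|(\rho,u,\ell,\omega)\|_{\widehat{\mc{S}}_{0,T}}^{3/2}$ via interpolation). The time-differentiated feedback contributes $\overline{k_p}'(h_1-h)-\overline{k_p}\ell-\overline{k_d}\ell'$ in the equation for $\ell''$; tested against $\ell'$, the $-\overline{k_d}|\ell'|^2$ piece gives dissipation, while the two other pieces are bounded pointwise in time by the already-controlled $k=0$ norms. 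The initial values $\partial_t\rho^*(0)$, $\partial_t u(0)$, $\ell'(0)$, $\omega'(0)$ appearing on the right-hand side correspond to the standard energy equality at $t=0$. The main obstacle here is organizing the time-differentiation on a moving domain, which I would handle as in \cite[Lemma 14]{Muriel-Sergio-IHP} by working in Lagrangian variables when taking $\partial_t$, so that the only new contributions beyond that reference are the three control-induced terms above, each of which fits into the scheme just described.
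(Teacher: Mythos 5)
Your proposal is correct and follows essentially the same route as the paper: the same multipliers ($p^*\rho^*/\overline{\rho}$, $u$, $\ell$, $\omega$ for $k=0$; their time derivatives for $k=1$), the same Lyapunov term $\tfrac{\overline{k_p}}{2}|h_1-h|^2$ obtained from $\overline{k_p}(h_1-h)\cdot\ell$, and the same absorption of the $k_p'$ defect via $h(t)-h_0=\int_0^t\ell$ together with the hypothesis $0\le k_p'<k_d/(2T_I^2)$, with the remainder swallowed by the $\overline{k_d}$-dissipation in $\ell$ (resp.\ $\ell'$) and the $k=0$ bound on $\|\ell\|_{L^2}$ feeding the $k=1$ case.
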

\begin{proof}[Proof of \cref{time reg1}]
\underline{Case $k=0.$} We multiply equation \eqref{full system:time dependent domain}$_1$ by $p^{*}\rho^*/\overline{\rho}$, \eqref{full system:time dependent domain}$_2$ by $u$, 
\eqref{full system:time dependent domain}$_3$ by $\ell$ and \eqref{full system:time dependent domain}$_4$ by $\omega$:
\begin{multline}\label{expression1}
\int\limits_{\mc{F}(t)}\left(\frac{p^{*}}{2\overline{\rho}}|\rho^*|^2+\frac{|u|^2}{2}\right)\,dx 
+ \int\limits_{0}^{t}\int\limits_{\mc{F}(s)}\left(2\overline{\mu} |\mathbb{D}(u)|^2 + \overline{\lambda} |\operatorname{div}u|^2\right)\,dx\,ds 
\\
+ \frac{\overline m}{2}|\ell|^2 + \frac{\overline J}{2}|\omega|^2 + \frac{\overline k_p}{2}(t)|h_1-h(t)|^2 + \overline k_d \int\limits_{0}^{t} |\ell|^2\,ds 
\\
= \int\limits_{0}^{t}\int\limits_{\mc{F}(s)} (f_0p^{*}\rho^*/\overline{\rho}+f_1\cdot u)\,dx\,ds + \int\limits_{0}^{t}(f_2\cdot\ell + f_3\cdot\omega)\,ds 
\\
+ \int\limits_{0}^{t}\int\limits_{\mc{F}(s)}\Big(\frac{p^{*}}{2\overline{\rho}}|\rho^*|^2 \operatorname{div}u 
+  \operatorname{div}\left(\frac{|u|^2u}{2}\right)\Big)\,dx\,ds 
\\
+ \int\limits_{0}^{t}\frac{\overline k_{p}'}{2}(s)|h_1-h(s)|^2\,ds 
+ \int\limits_{\mc{F}(0)}\frac{p^{*}}{2\overline{\rho}}|\rho_0-\overline{\rho}|^2\,dy 
+ \int\limits_{\mc{F}(0)}\frac{|u_0|^2}{2}\,dy + \frac{\overline m}{2}|\ell_0|^2 + \frac{\overline J}{2}|\omega_0|^2.
\end{multline}
Following standard calculation, we have
\begin{equation}\label{1-RHS3-1624}
\int\limits_{0}^{t}\int\limits_{\mc{F}(s)}\left(\frac{p^{*}}{2\overline{\rho}}|\rho^*|^2 \operatorname{div}u +  \operatorname{div}\left(\frac{|u|^2u}{2}\right)\right)\,dx\,ds \leq C\|(\rho,u,\ell,\omega)\|_{\widehat{\mc{S}}_{0,T}}^3 .
\end{equation}
It only remains to estimate 
\begin{multline}\label{est:springterm}
\int\limits_{0}^{T}\frac{\overline k_{p}'}{2}(s)|h_1-h(s)|^2\,ds \leq \int\limits_{0}^{T_I}\frac{\overline k_{p}'}{2}(s)|h_1-h(s)|^2\,ds \\
\leq \|\overline k_{p}'\|_{L^{\infty}(0,T)}\Bigg(T_I|h_1-h_0|^2 + \int\limits_{0}^{T_{I}} \left(\int\limits_0^s \ell(z)\, dz\right)^2\,ds \Bigg). 
\end{multline}
By using H\"{o}lder's inequality and \eqref{hypkp}, we obtain
\begin{equation}\label{1-RHS4}
\int\limits_{0}^{T}\frac{\overline k_{p}'}{2}(s)|h_1-h(s)|^2\,ds \leq C|h_1-h_0|^2 + \frac{k_d}{2} \int\limits_{0}^{T} |\ell|^2\,ds.
\end{equation}
Combining \eqref{expression1}, \eqref{1-RHS3-1624}, \eqref{1-RHS4} and Young's inequality, we deduce the result for $k=0$.

\underline{Case $k=1.$} By differentiating \eqref{full system:time dependent domain} with respect to 
$t$, we obtain:
\begin{equation}\label{full system:time derivative}
\left\{
        \begin{array}{ll}
        \displaystyle
        \frac{\partial}{\partial t}\left(\frac{\partial \rho^*}{\partial t}\right)+ (u\cdot \nabla)\frac{\partial \rho^*}{\partial t} + \overline{\rho}\operatorname{div}\frac{\partial u}{\partial t}=G_0 \quad & t \in (0,T), \, x\in\mc{F}(t),\\
\dfrac{\partial}{\partial t}\left(\dfrac{\partial u}{\partial t}\right) - \operatorname{div}\sigma^{*}\left(\dfrac{\partial u}{\partial t},\dfrac{\partial \rho^*}{\partial t}\right)=G_1\quad & t \in (0,T), \, x\in\mc{F}(t),\\
\overline m\ell''= -\displaystyle\int\limits_{\partial \mc{B}(t)} \sigma^{*} \left(\dfrac{\partial u}{\partial t},\dfrac{\partial \rho^*}{\partial t}\right)N\, d\Gamma 
+ [\overline k_p(h_1-h(t))]' - \overline k_d \ell'(t) + G_2&  t \in (0,T),\\ 
\overline J\omega''=-\displaystyle\int\limits_{\partial \mc{B}(t)} (x-h) \times \sigma^{*} \left(\dfrac{\partial u}{\partial t},\dfrac{\partial \rho^*}{\partial t}\right)N\, d\Gamma + G_3 &  t \in (0,T),\\
h'=\ell & t \in (0,T),\\
\dfrac{\partial u}{\partial t}(t,x)=0,\quad &  t \in (0,T),\, x \, \in \partial \Omega, \\
\dfrac{\partial u}{\partial t}(t,x)= \ell'(t)+\omega'(t) \times (x-h(t))+G_4, \quad &  t \in (0,T),\, x \, \in \partial \mc{B}(t).
        \end{array}
        \right.
\end{equation}
where
\begin{equation*}
\left\{
        \begin{array}{ll}
        \displaystyle      
&G_0=\dfrac{\partial f_0}{\partial t} - \left(\dfrac{\partial u}{\partial t}\cdot \nabla\right)\rho^*,\quad G_1=\dfrac{\partial f_1}{\partial t},\quad G_2=\dfrac{\partial f_2}{\partial t} - \displaystyle\int\limits_{\partial \mc{B}(t)} \ell \cdot \nabla(\sigma^*(u,\rho^*)N)\, d\Gamma,\\
&G_3= \dfrac{\partial f_3}{\partial t} - \displaystyle\int\limits_{\partial \mc{B}(t)}\ell \cdot \nabla((x-h)\times (\sigma^*(u,\rho^*))N)\,d\Gamma + \displaystyle\int\limits_{\partial \mc{B}(t)} \ell \times (\sigma^*(u,\rho^*))N)\,d\Gamma,\\    
&G_4=-(\ell\cdot\nabla)u.
        \end{array}
        \right.
\end{equation*}
As in the first case, we multiply 
equation \eqref{full system:time derivative}$_{1}$ by $\dfrac{p*}{\overline{\rho}}\dfrac{\partial \rho^*}{\partial t}$,
equation \eqref{full system:time derivative}$_{2}$ by $\dfrac{\partial u}{\partial t}$, 
equation \eqref{full system:time derivative}$_{3}$ by $\ell'$, 
and 
equation \eqref{full system:time derivative}$_{4}$ by $\omega'$. After some computations, we find
\begin{multline}\label{expression2}
\int\limits_{\mc{F}(t)}\left(\frac{p^{*}}{2\overline{\rho}}\left|\dfrac{\partial \rho^*}{\partial t}\right|^2 + \frac{1}{2} \left|\frac{\partial u}{\partial t}\right|^2\right)\,dx + \int\limits_{0}^{t}\int\limits_{\mc{F}(s)} \left(2\overline \mu\left|\mathbb{D}\left(\frac{\partial u}{\partial t}\right)\right|^2 +  \overline \lambda \left|\operatorname{div}\frac{\partial u}{\partial t}\right|^2\right)\,dx\,ds 
\\
+ \frac{\overline m}{2}|\ell'(t)|^2 + \frac{\overline J}{2}|\omega'(t)|^2 
+ \overline k_d \int\limits_{0}^{t} |\ell'(s)|^2\,ds 
\\
= \int\limits_{0}^{t}\int\limits_{\mc{F}(s)} \left(G_0\frac{p^{*}}{\overline{\rho}}\dfrac{\partial \rho^*}{\partial t}+G_1\cdot \dfrac{\partial u}{\partial t}\right)\,dx\,ds + \int\limits_{0}^{t}(G_2\cdot\ell' + G_3\cdot\omega')\,ds 
+ \int\limits_0^t\displaystyle\int\limits_{\partial \mc{B}(s)} G_4 \cdot \sigma^{*} \left(\dfrac{\partial u}{\partial t},\dfrac{\partial \rho^*}{\partial t}\right)N\,d\Gamma\,ds \\
+ \int\limits_{0}^{t}\int\limits_{\mc{F}(s)}\frac{p^{*}}{2\overline{\rho}}\left|\dfrac{\partial \rho^*}{\partial t}\right|^2 \operatorname{div} u\,dx\,ds 
+ \int\limits_{0}^{t}\int\limits_{\partial \mc{F}(s)}\frac{1}{2}\operatorname{div}\left(\left|\dfrac{\partial u}{\partial t}\right|^2 u\right)\,dx\,ds 
\\
+ \int\limits_0^t [\overline k_p(s)(h_1-h(s))]'\cdot \ell'(s) \,ds 
+ \int\limits_{\mc{F}(0)}\left(\frac{p^{*}}{2\overline{\rho}}\left|\dfrac{\partial \rho^*}{\partial t}(0)\right|^2 
+ \frac{1}{2} \left|\frac{\partial u}{\partial t}(0)\right|^2\right)\,dy + \frac{\overline m}{2}|\ell'(0)|^2 + \frac{\overline J}{2}|\omega'(0)|^2 .
\end{multline}

We have the following estimates as in \cite[Lemma 13]{Muriel-Sergio-IHP}: 
\begin{multline}\label{19:32}
\|G_0\|_{L^2_T(L^2)}^2+\|G_1\|_{L^2_T(L^2)}^2
+\left\|G_2\right\|^2_{L^2(0,T)}
+\left\|G_3\right\|^2_{L^2(0,T)}
+\left\|G_4\right\|^2_{L^2_T(L^2(\partial \mc{B}(t))}
\\
+\int\limits_{0}^{t}\int\limits_{\mc{F}(s)}\left(\left|\dfrac{\partial \rho^*}{\partial t}\right|^2 \operatorname{div} u
+ \operatorname{div}\left(\left|\dfrac{\partial u}{\partial t}\right|^2 u\right)\right)\,dx\,ds 
\\
\leq 
C \left(
\left\|\frac{\partial f_0}{\partial t}\right\|^2_{L^2_{T}(L^2)} 
+\left\|\frac{\partial f_1}{\partial t}\right\|^2_{L^2_{T}(L^2)}
+\left\|\frac{\partial f_2}{\partial t}\right\|^2_{L^2(0,T)}
+\left\|\frac{\partial f_3}{\partial t}\right\|^2_{L^2(0,T)}
\right.\\ \left.
+\|(\rho,u,\ell,\omega)\|_{\widehat{\mc{S}}_{0,T}}^3 
+ \|(\rho,u,\ell,\omega)\|_{\widehat{\mc{S}}_{0,T}}^4 
\right)
\end{multline}
It only remains to estimate the term coming from the feedback:
\begin{multline*}
\int\limits_0^t [\overline k_p(s)(h_1-h(s))]'\cdot \ell'(s)\,ds 
= 
\int\limits_0^t \overline k_p'(s)(h_1-h(s))\cdot \ell'(s)\,ds 
- \int\limits_0^t \overline k_p(s)\ell(s) \cdot \ell'(s)\,ds 
\\
= \int\limits_0^t \overline k_p'(s)(h_1-h(s))\cdot \ell'(s)\,ds + \int\limits_0^t \frac{\overline k_p'}{2}(s)|\ell(s)|^2\,ds - \frac{\overline k_p}{2}(t)|\ell(t)|^2,
\end{multline*}
and proceeding as in \eqref{est:springterm}, we have the following estimates
\begin{equation}\label{2-RHS8}
\int\limits_0^t [\overline k_p(s)(h_1-h(s))]'\cdot \ell'(s)\,ds \leq C\left(|h_1-h_0|^2 + \int\limits_0^T |\ell(s)|^2\, ds\right) + \frac{\overline k_d}{2}\int\limits_{0}^T |\ell'(s)|^2\, ds. 
\end{equation}
We can estimate $\|\ell\|_{L^2(0,T)}^2$ with  \eqref{sup-norm-estimate} for $k=0.$
With this remark and combining  \eqref{expression2}, inequality \eqref{2-RHS8} and the above estimates we deduce \eqref{sup-norm-estimate} for $k=1.$
\end{proof}

\begin{lem}\label{time reg2}
Let $k=0,1$. There exists a constant $C>0$ such that 
\begin{multline}\label{est:time reg2}
\left\|\frac{\partial \rho^*}{\partial t}\right\|_{H^{k}_{T}(L^2)} + \|u\|_{W^{k,\infty}_{T}(H^1)}  
+ \left\|\frac{\partial u}{\partial t}\right\|_{{H^k_{T}(L^2)}} + \|\ell'\|_{H^k(0,T)}+\|\omega'\|_{H^k(0,T)} 
\\
\leq C\Big(\|\rho^*\|_{W^{k,\infty}_T(L^2)} + \|u\|_{H^k_T(H^1)}+\|\ell\|_{W^{k,\infty}(0,T)}+ \|\ell\|_{H^k(0,T)}
+ \|\sqrt{k_p}(h_1-h)\|_{L^{\infty}(0,T)}
\\
+\|f_0\|_{H^k_{T}(L^2)} + \|f_1\|_{H^k_{T}(L^2)}+\|f_2\|_{H^k(0,T)}+\|f_3\|_{H^k(0,T)}\\
+\|\rho_0-\overline{\rho}\|_{L^2}+\left\|\frac{\partial \rho}{\partial t}(0,\cdot)\right\|_{L^2}+\|u_0\|_{H^1}+\left\|\frac{\partial u}{\partial t}(0,\cdot)\right\|_{H^1}+|h_1-h_0|
\\
+ |\ell_0|+ |\ell'(0)|+ |\omega_0|
+\|(\rho,u,\ell,\omega)\|_{\widehat{\mc{S}}_{0,T}}^{3/2} + \|(\rho,u,\ell,\omega)\|_{\widehat{\mc{S}}_{0,T}}^2 \Big).
\end{multline}
\end{lem}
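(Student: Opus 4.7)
The proof strategy mirrors the one used for \cref{time reg1}, except that I now test with \emph{time-derivative} multipliers (a parabolic-type argument) instead of with the unknowns themselves. The bounds on $\partial_t\rho^*$ are read off algebraically from the continuity equation, while the bounds on $\partial_t u$, $\ell'$, $\omega'$ and $\|u\|_{W^{k,\infty}_T(H^1)}$ come from an energy identity in which the fluid dissipation $\int|\mathbb{D}(\partial_t u)|^2$ is traded for the $L^2_T$ control of $\partial_t u$ by means of an integration by parts in time.

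\emph{Case $k=0$.} From \eqref{full system:time dependent domain}$_1$ we read $\partial_t \rho^* = f_0 - u\cdot\nabla\rho^* - \overline{\rho}\,\div u$, so $\|\partial_t\rho^*\|_{L^2_T(L^2)}$ is controlled by $\|f_0\|_{L^2_T(L^2)}$, $\|u\|_{L^2_T(H^1)}$, and a cubic remainder estimated by $\|(\rho,u,\ell,\omega)\|_{\widehat{\mc{S}}_{0,T}}^{2}$ via H\"older and Sobolev embeddings. For the remaining quantities, I multiply \eqref{full system:time dependent domain}$_2$ by $\partial_t u$, \eqref{full system:time dependent domain}$_3$ by $\ell'$, \eqref{full system:time dependent domain}$_4$ by $\omega'$, integrate in $\mc{F}(t)$, and combine using the trace relation on $\partial\mc{B}(t)$ (where, modulo a Reynolds-transport correction, $\partial_t u = \ell' + \omega'\times(x-h) - \omega\times\ell$). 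This yields an identity of the form
\begin{equation*}
\int_{\mc{F}(t)}|\partial_t u|^2\,dx + \overline{m}|\ell'|^2 + \overline{J}|\omega'|^2 + \frac{d}{dt}\int_{\mc{F}(t)}\left(\overline{\mu}|\mathbb{D}(u)|^2 + \tfrac{\overline{\lambda}}{2}(\div u)^2 - p^*\rho^*\div u\right)dx = \mathrm{RHS},
\end{equation*}
where $\mathrm{RHS}$ contains $f_1\cdot\partial_t u$, $f_2\cdot\ell'$, $f_3\cdot\omega'$, the feedback term $\overline{k_p}(h_1-h)\cdot\ell'$, moving-domain corrections, and cubic nonlinearities. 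Integrating from $0$ to $t$ and rewriting
\[
\int_0^t \overline{k_p}(h_1-h)\cdot \ell'\,ds = -\tfrac{\overline{k_p}(t)}{2}|h_1-h(t)|^2 + \tfrac{\overline{k_p}(0)}{2}|h_1-h_0|^2 + \int_0^t \tfrac{\overline{k_p}'(s)}{2}|h_1-h(s)|^2\,ds,
\]
the feedback contribution is absorbed into $C(\|\sqrt{k_p}(h_1-h)\|_{L^\infty(0,T)}^2 + |h_1-h_0|^2)$ thanks to \eqref{hypkp}. Young's inequality then separates $\|\partial_t u\|_{L^2_T(L^2)}$, $\|\ell'\|_{L^2}$, $\|\omega'\|_{L^2}$ and $\|\nabla u\|_{L^\infty_T(L^2)}$ from the right-hand side.

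\emph{Case $k=1$.} I apply the same scheme to the differentiated system \eqref{full system:time derivative}, now multiplying by $\partial_{tt}u$, $\ell''$ and $\omega''$. The boundary condition $\partial_t u|_{\partial\mc{B}(t)} = \ell' + \omega'\times(x-h) + G_4$ with $G_4 = -(\ell\cdot\nabla)u$ produces additional boundary contributions, controlled via \eqref{19:32} and standard trace estimates, using the $k=0$ bounds just obtained. The feedback now reads $[\overline{k_p}(h_1-h)]'\cdot\ell' = \overline{k_p}'(h_1-h)\cdot\ell' - \overline{k_p}\ell\cdot\ell'$; after integration by parts in time it produces a boundary term $-\tfrac{\overline{k_p}(t)}{2}|\ell(t)|^2$ and a remainder bounded exactly as in \eqref{2-RHS8}, via \eqref{hypkp} and the $k=0$ estimate of \cref{time reg1}.

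\emph{Main obstacle.} The principal difficulty is to ensure the feedback does not overwhelm the dissipation $\overline{k_d}\int_0^T|\ell'|^2\,ds$. The hypothesis $0\leq k_p'<k_d/(2T_I^2)$ in \eqref{hypkp}, together with $\mathrm{supp}(k_p')\subset[0,T_I]$, is precisely calibrated so that the troublesome contribution $\int_0^T \overline{k_p}'|h_1-h|^2\,ds$ is bounded by $C(|h_1-h_0|^2+\int_0^T|\ell|^2\,ds)$, with $\|\ell\|_{L^2(0,T)}$ then absorbed into the $k=0$ estimate of \cref{time reg1}. A secondary technical burden is the careful bookkeeping of the moving-domain corrections (Reynolds transport, boundary commutators in $G_0,G_1,G_2,G_3,G_4$), which fit into the $\|(\rho,u,\ell,\omega)\|_{\widehat{\mc{S}}_{0,T}}^{3/2}$ and $\|(\rho,u,\ell,\omega)\|_{\widehat{\mc{S}}_{0,T}}^{2}$ tail on the right of \eqref{est:time reg2}.
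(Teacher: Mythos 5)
Your overall strategy is the same as the paper's: for $k=0$ you test \eqref{full system:time dependent domain}$_{2,3,4}$ with $\partial_t u,\ \ell',\ \omega'$ (the paper also multiplies the continuity equation by $\partial_t\rho^*$, but reading $\partial_t\rho^*$ off algebraically as you do is equivalent), and for $k=1$ you test the differentiated system \eqref{full system:time derivative} with $\partial_{tt}u,\ \ell'',\ \omega''$; the only genuinely new point relative to the control-free estimates of Boulakia--Guerrero is the feedback term, and that is exactly where your write-up goes wrong. Your displayed identity
\begin{equation*}
\int_0^t \overline{k_p}(h_1-h)\cdot \ell'\,ds = -\tfrac{\overline{k_p}(t)}{2}|h_1-h(t)|^2 + \tfrac{\overline{k_p}(0)}{2}|h_1-h_0|^2 + \int_0^t \tfrac{\overline{k_p}'}{2}|h_1-h|^2\,ds
\end{equation*}
is false: it is the antiderivative identity for $\int \overline{k_p}(h_1-h)\cdot \ell\,ds$ (i.e.\ it treats $\ell'$ as if it were $h'$, whereas here $\ell'=h''$). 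The correct integration by parts, as in \eqref{for arnab}, gives
\begin{equation*}
\int_0^t \overline{k_p}(h_1-h)\cdot \ell'\,ds = \overline{k_p}(t)(h_1-h(t))\cdot\ell(t) - \int_0^t \overline{k_p}'(h_1-h)\cdot\ell\,ds + \int_0^t \overline{k_p}|\ell|^2\,ds,
\end{equation*}
using $k_p(0)=0$. The conclusion survives, because the boundary term is bounded by $k_p|h_1-h|^2+|\ell|^2\leq \|\sqrt{k_p}(h_1-h)\|_{L^\infty(0,T)}^2+\|\ell\|_{L^\infty(0,T)}^2$ and the bulk terms by $C(|h_1-h_0|^2+\int_0^T|\ell|^2\,ds)$ via \eqref{hypkp} — all quantities present on the right of \eqref{est:time reg2} — but as written your step does not prove the bound you claim.

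The same confusion recurs in your $k=1$ case: having multiplied \eqref{full system:time derivative}$_3$ by $\ell''$, the feedback contribution is $\int_0^t[\overline{k_p}(h_1-h)]'\cdot\ell''\,ds$, not $[\overline{k_p}(h_1-h)]'\cdot\ell'$; the latter (and the boundary term $-\tfrac{\overline{k_p}(t)}{2}|\ell(t)|^2$ you describe, together with the bound \eqref{2-RHS8}) belongs to the $k=1$ case of \cref{time reg1}, not to this lemma. Here no integration by parts in time is needed: one writes $[\overline{k_p}(h_1-h)]'\cdot\ell''=\overline{k_p}'(h_1-h)\cdot\ell''-\overline{k_p}\,\ell\cdot\ell''$ and applies Young's inequality, absorbing $\tfrac{\overline{m}}{4}\int_0^t|\ell''|^2\,ds$ into the left-hand dissipation and leaving $C(|h_1-h_0|^2+\int_0^T|\ell|^2\,ds)$, exactly as in \eqref{derivative of spring term}. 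Your framing of the "main obstacle" (that $k_p'$ must not overwhelm $k_d\int|\ell'|^2$) is also slightly off for this lemma: in \eqref{expression 3}--\eqref{expression 4} the absorption is into the $|\ell'|^2$, $|\ell''|^2$ dissipation produced by the multipliers themselves and into the $\|\ell\|_{L^2(0,T)}$ term already controlled by \cref{time reg1}, rather than into $\overline{k_d}\int|\ell'|^2$.
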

\begin{proof}[Proof of \cref{time reg2}]
\underline{Case $k=0$.} 
We multiply equation \eqref{full system:time dependent domain}$_1$ by $\dfrac{\partial\rho^*}{\partial t}$, 
\eqref{full system:time dependent domain}$_2$ by $\dfrac{\partial u}{\partial t}$,
\eqref{full system:time dependent domain}$_3$ by $\ell'$ and
\eqref{full system:time dependent domain}$_4$ by $\omega'$. After standard computations, we find
\begin{multline}\label{expression 3}
\int\limits_{0}^{t}\int\limits_{\mc{F}(s)} \left(\left|\dfrac{\partial \rho^*}{\partial t}\right|^2 + \left|\dfrac{\partial u}{\partial t}\right|^2\right)\,dx\,ds 
+ \int\limits_{\mc{F}(t)} \left(2\overline \mu|\mathbb{D}(u)|^2 +   \overline \lambda|\operatorname{div}u|^2\right)\,dx 
+ \int\limits_{0}^{t}\left(\overline{m} |\ell'|^2 + \overline{J} |\omega'|^2\right) \ ds
\\ 
+\overline k_d |\ell(t)|^2
	-2 \int_0^t \overline k_p(h_1-h)\cdot \ell' \ ds
\\
\leq 
C\Bigg(\int\limits_{0}^{t}\int\limits_{\mc{F}(s)} |\operatorname{div}(\left(2\overline \mu|\mathbb{D}(u)|^2 +   \overline \lambda|\operatorname{div}u|^2\right)u)|\,dx\,ds 
+ \int\limits_{0}^{t}\int\limits_{\mc{F}(s)}\left(p^{*}\rho^*\operatorname{div}\frac{\partial u}{\partial t}
	-\overline{\rho}\frac{\partial \rho^*}{\partial t}\operatorname{div}u\right)\,dx\,ds 
\\
+\int\limits_{0}^{t}\int\limits_{\mc{F}(s)} \left|\frac{\partial \rho^*}{\partial t}\right||(u\cdot\nabla \rho^*)|\,dx\,ds 
+ \int\limits_{\mc{F}(0)} |\nabla u_0|^2\,dx +\overline k_d|\ell_0|^2
\\
+ \int\limits_{0}^{t}\left(\int\limits_{\mc{F}(s)} \left(|f_0|^2+|f_1|^2 \right)\ dx+ |f_2|^2+|f_3|^2\right)\ ds 
+ \int\limits_{0}^{t}\int\limits_{\partial \mc{B}(s)} G_4 \cdot \sigma^* \left(u,\rho^*\right)N\,d\Gamma\,ds\Bigg).
\end{multline}
The terms in the right-hand side of \eqref{expression 3} can be estimated as in Lemma 14 in \cite{Muriel-Sergio-IHP}. We only estimate
\begin{equation}\label{for arnab}
-\int_0^t \overline k_p(h_1-h)\cdot \ell' \ ds
=
- \overline k_p(t) (h_1-h)\cdot \ell(t)
+\int\limits_{0}^{t} \left(\overline k_p'(h_1-h)\cdot \ell 
-\overline k_p |\ell|^2\right)\, ds
\end{equation}
and thus
$$
\left|-\int_0^t \overline k_p(h_1-h)\cdot \ell' \ ds
\right|
 \leq C\left(|h_1-h_0|^2 +k_p |h_1-h|^2
 +|\ell|^2+ \int\limits_0^t |\ell(s)|^2\, ds\right).
$$

\underline{Case $k=1$.} 
We multiply \eqref{full system:time derivative}$_1$ by $\dfrac{\partial^2\rho^*}{\partial t^2}$, 
\eqref{full system:time derivative}$_2$ by $\dfrac{\partial^2 u}{\partial t^2}$
\eqref{full system:time derivative}$_3$ by $\ell''$
and
\eqref{full system:time derivative}$_4$ by $\omega''$.
Following the proof of Lemma 14 in \cite{Muriel-Sergio-IHP}, we find
\begin{multline}\label{expression 4}
\int\limits_{0}^{t}\int\limits_{\mc{F}(s)} \left(\left|\dfrac{\partial^2 \rho^*}{\partial t^2}\right|^2 
	+ \left|\dfrac{\partial^2 u}{\partial t^2}\right|^2\right)\,dx\,ds 
+ \int\limits_{\mc{F}(t)} \left(2\overline \mu\left|\mathbb{D}\left(\dfrac{\partial u}{\partial t}\right)\right|^2 
	+ \overline \lambda\left|\operatorname{div}\dfrac{\partial u}{\partial t}\right|^2\right)\,dx 
\\
+\int\limits_{0}^{t}\left(\frac{\overline m}{2}|\ell''|^2+\frac{\overline J}{2}|\omega''|^2\right)\,ds +2 \overline k_d (|\ell'(t)|^2-|\ell'(0)|^2)
\\
 - \int\limits_{0}^{t} \overline k_p'(s)(h_1-h(s))\cdot \ell''(s)\, ds + \int\limits_{0}^{t} \overline k_p(s) \ell(s)\cdot \ell''(s)\,ds
\\
\leq C\Big(\|\rho^*\|_{W^{1,\infty}_T(L^2)}^2 + \|u\|_{H^1_T(H^1)}^2+\|\ell\|_{W^{1,\infty}(0,T)}^2+ \|\ell\|_{H^1(0,T)}^2
\\
+\|f_0\|_{H^1_{T}(L^2)}^2 + \|f_1\|_{H^1_{T}(L^2)}^2+\|f_2\|_{H^1(0,T)}^2+\|f_3\|_{H^1(0,T)}^2
\\
+\|\rho_0-\overline{\rho}\|_{L^2}^2
+\left\|\frac{\partial \rho}{\partial t}(0,\cdot)\right\|_{L^2}^2
+\|u_0\|_{H^1}^2
+\left\|\frac{\partial u}{\partial t}(0,\cdot)\right\|_{H^1}^2
\\
+ |\ell_0|^2+ |\ell'(0)|^2+ |\omega_0|^2
+\|(\rho,u,\ell,\omega)\|_{\widehat{\mc{S}}_{0,T}}^{3} + \|(\rho,u,\ell,\omega)\|_{\widehat{\mc{S}}_{0,T}}^4 \Big).
\end{multline}
We estimate the additional term due to the feedback:
\begin{multline}\label{derivative of spring term}
\left|\int\limits_{0}^{t} (-\overline k_p'(s)(h_1-h(s))\cdot \ell''(s) + \overline k_p(s) \ell(s)\cdot \ell''(s))\, ds\right| 
\\
\leq C\left(|h_1-h_0|^2 + \int\limits_0^T |\ell(s)|^2\, ds\right)  + \frac{\overline m}{4} \int\limits_{0}^{t} |\ell''(s)|^2\, ds,
\end{multline}
and this allows us to prove this case.
\end{proof}

\subsection{Proof of Theorem \ref{global existence}}
\begin{proof} 
We combine \cref{local-in-time existence} and \cref{aprori est} to establish our result. Note that we can take $\delta_0$ small enough in \cref{local-in-time existence} so that \eqref{initial cond req-2} yields
\begin{equation*}
h_0 \in \Omega^0 \mbox{ and }\rho_0 >0.
\end{equation*}
Since $h_1 \in \Omega^0$, there exists $\eta>0$ such that 
\begin{equation*}
\operatorname{dist}(h_1,\partial\Omega)>1+2\eta.
\end{equation*}
We can assume that $\delta_0\leq \eta$ where $\delta_0$ is the constant in \eqref{initial cond req-2}.

Let us fix 
\begin{equation}\label{choice of delta}
\delta = \min\left(\delta_0,\,\dfrac{\varepsilon_0}{C_*},\, \frac{\varepsilon_0}{C_0\left(1+ \frac{C_*}{\sqrt{k_p(T_*)}}\right)}, \frac{\eta\sqrt{k_p(T_*)}}{C_0}
\right),
\end{equation}
 where the constants $\delta_0$, $C_*$ are appeared in \cref{local-in-time existence}, $\varepsilon_0$, $C_0$ are introduced in \cref{aprori est}. Since $(\rho_0,u_0,h_0,\ell_0,\omega_0)$ satisfies \eqref{initial condition space:global}-\eqref{smallness} and $\delta\leq \delta_0$, we can apply \cref{local-in-time existence} to obtain the existence of solution of system \eqref{continuity}--\eqref{feedback} in $(0,T_*)$ and 
\begin{equation*}
\|(\rho,u,\ell,\omega)\|_{\widehat{\mc{S}}_{0,T_*}}+ \|h_1-h\|_{L^{\infty}(0,T_*)} \leq C_* \left(\|(\rho_0,u_0,\ell_0,\omega_0)\|_{\widehat{\mc{S}}_{0,0}} + |h_1-h_0|\right).
\end{equation*}
In particular, from \eqref{smallness} and \eqref{choice of delta},
\begin{equation}\label{00:23}
\|(\rho,u,\ell,\omega)\|_{\widehat{\mc{S}}_{0,T_*}}+ \|h_1-h\|_{L^{\infty}(0,T_*)} \leq C_*\delta \leq \varepsilon_0 \leq \delta_0.
\end{equation}
Thus $\operatorname{dist}(h(t),\partial\Omega)>1+\eta$ for $t\in [0,T_*]$ and \cref{aprori est} gives
\begin{equation}\label{useapriori}
\|(\rho,u,\ell,\omega)\|_{\widehat{\mc{S}}_{0,T_*}}+ \|\sqrt{k_p}(h_1-h)\|_{L^{\infty}(0,T_*)} \leq C_0 \left(\|(\rho_0,u_0,\ell_0,\omega_0)\|_{\widehat{\mc{S}}_{0,0}} + |h_1-h_0|\right).
\end{equation}
Using that $(\rho,u,h,\ell,\omega)$ is solution of \eqref{continuity}--\eqref{feedback}, one can check that 
$$
(\rho(T_*,\cdot),u(T_*,\cdot),h(T_*),\ell(T_*),\omega(T_*))
$$ 
satisfies the compatibility conditions \eqref{finalcompatibility-1}-\eqref{finalcompatibility-3} and, from \eqref{00:23}, we have
 \begin{equation*}
\|(\rho(T_*,\cdot),u(T_*,\cdot),\ell(T_*),\omega(T_*))\|_{\widehat{\mc{S}}_{T_*,T_*}}
+ |h_1-h(T_*)| 
\leq \delta_0.
\end{equation*}
We can thus apply again \cref{local-in-time existence} to extend our solution on $(T_*,2T_*)$ and using \eqref{useapriori}, we find
\begin{multline}\label{00:56}
\|(\rho,u,\ell,\omega)\|_{\widehat{\mc{S}}_{T_*,2T_*}}+ \|\sqrt{k_p}(h_1-h)\|_{L^{\infty}(T_*,2T_*)}
\\
\leq C_*\left( \|(\rho(T_*,\cdot),u(T_*,\cdot),\ell(T_*),\omega(T_*))\|_{\widehat{\mc{S}}_{T_*,T_*}}+ |h_1-h(T_*)|\right)
\\
\leq \frac{C_*C_0}{\sqrt{k_p(T_*)}}\left(\|(\rho_0,u_0,\ell_0,\omega_0)\|_{\widehat{\mc{S}}_{0,0}} + |h_1-h_0|\right).
\end{multline}
Thus, combining \eqref{useapriori} and \eqref{00:56}, and using \eqref{choice of delta}, we obtain
\begin{multline*}
\|(\rho,u,\ell,\omega)\|_{\widehat{\mc{S}}_{0,2T_*}}+ \|\sqrt{k_p}(h_1-h)\|_{L^{\infty}(0,2T_*)} \leq C_0\left(1+\frac{C_*}{\sqrt{k_p(T_*)}}\right)\left(\|(\rho_0,u_0,\ell_0,\omega_0)\|_{\widehat{\mc{S}}_{0,0}} + |h_1-h_0|\right) \\ \leq C_0\left(1+\frac{C_*}{\sqrt{k_p(T_*)}}\right)\delta \leq \varepsilon_0.
\end{multline*}
Applying \cref{aprori est}, we deduce
\begin{equation}\label{useapriori1}
\|(\rho,u,\ell,\omega)\|_{\widehat{\mc{S}}_{0,2T_*}}+ \|\sqrt{k_p}(h_1-h)\|_{L^{\infty}(0,2T_*)} \leq C_0 \left(\|(\rho_0,u_0,\ell_0,\omega_0)\|_{\widehat{\mc{S}}_{0,0}} + |h_1-h_0|\right).
\end{equation}
In particular
$\operatorname{dist}(h(t),\partial\Omega)>1+\eta$ for $t\in [T_*,2T_*]$.
Moreover, from \eqref{00:56} and \eqref{choice of delta}, 
\begin{equation*}
\|(\rho(2T_*,\cdot),u(2T_*,\cdot),\ell(2T_*),\omega(2T_*))\|_{\widehat{\mc{S}}_{2T_*,2T_*}}
+ |h_1-h(2T_*)| \leq C_0\frac{C_*}{\sqrt{k_p(T_*)}}\delta \leq \varepsilon_0 \leq \delta_0.
\end{equation*}
Then, we repeat the argument on $[jT_*, (j+1)T_*]$, $j\in \mathbb{N}^*$ and we use that $k_p$ is non-decreasing to conclude the proof.
\end{proof}

\section{Proof of Theorem \ref{asymptotic behavior}}\label{sec:Proof of Theorem asymptotic behavior}
This section is devoted to the proof of Theorem \ref{asymptotic behavior}. 
First, from \cref{global existence}, we have 
$$
\rho - \overline{\rho}\in H^{1}(0,\infty; H^2(\mc{F}(t))), \quad
u\in H^1(0,\infty; H^2(\mc{F}(t)), \quad
\ell, \omega\in H^2(0,\infty)
$$
so that (\cite[Corollary 8.9, p.214]{brezis2010functional}), 
\begin{align}
\lim_{t \to \infty}\|\rho(t,.)-\overline{\rho}\|_{H^2(\mc{F}(t))}=0,\quad \lim_{t \to \infty}\|u(t,.)\|_{H^2(\mc{F}(t))}=0, \quad
 \lim_{t \to \infty}\ell(t)=0,\quad \lim_{t \to \infty}\omega(t)=0 \label{solid body vel}.
\end{align}

In the rest of the section, we show $\lim_{t \to \infty} h(t)=h_1$ that completes the proof of Theorem \ref{asymptotic behavior}. 
In order to do this, we need the notion of weak solutions for the problem \eqref{continuity}-\eqref{initial}.
First, we extend $\rho$ and $u$ in $\mathbb{R}^3$ by the formula
$$
{\rho} = \begin{cases}
\rho \mbox{ in }\mc{F}(t), \\ \frac{3m}{4\pi}=\rho_{\mc{B}} \mbox{ in }\mc{B}(t), \\ 0 \mbox{ in }\mathbb{R}^3 \setminus \Omega.
\end{cases}   \quad {u}=\begin{cases}
u \mbox{ in }\mc{F}(t), \\   \ell(t)+\omega(t) \times (x-h(t))=u_{\mc{B}}  \mbox{ in }\mc{B}(t), \\ 0 \mbox{ in }\mathbb{R}^3 \setminus \Omega.
\end{cases}
$$
Then we consider the following notion of weak solutions (see \cite{Feireisl-ARMA}). 
\begin{defin}\label{weaksol:def}
A triplet $({\rho}, {u}, h)$ is a weak solution to \eqref{continuity}-\eqref{initial} on $(0,T)$ if 
\begin{gather*}
{\rho} \geq 0, \quad
{\rho} \in L^{\infty}(0,T; L^{\gamma}(\Omega)) \cap C([0,T];L^1(\Omega)),\quad
 {u}\in L^2(0,T; H^1_{0}(\Omega)),
\\
u=\ell(t)+\omega(t) \times (x-h(t)) \mbox{ in }\mc{B}(t), \quad h'=\ell,
\\
\int\limits_{0}^{T} \int\limits_{\mathbb{R}^3} \left[{\rho}\frac{\partial \phi}{\partial t}+({\rho}{u})\cdot \nabla \phi\right]\, dx\,dt =0, \label{weak density}\\
\int\limits_{0}^{T} \int\limits_{\mathbb{R}^3} \left[b({\rho})\frac{\partial \phi}{\partial t}+(b({\rho}){u})\cdot \nabla \phi+\left(b({\rho})-b'({\rho}){\rho}\right)\operatorname{div}{u}\,\phi\right]\, dx\,dt =0, \label{renormalized}
\end{gather*}
for any $\phi \in C_c^{\infty}((0,T)\times \mathbb{R}^3)$ and for any $b \in C^1(\mathbb{R})$ such that $b'(z)=0$ for $z$ large enough;
\begin{multline}\label{continuity weak}
\int\limits_{0}^{T} \int\limits_{\mathbb{R}^3} \left[({\rho}{u})\cdot \frac{\partial \phi}{\partial t}+ ({\rho}{u} \otimes {u}) : \mathbb{D}(\phi) + a{\rho}^{\gamma} \operatorname{div} \phi\right]\, dx\, dt 
\\
=   \int\limits_{0}^{T} \int\limits_{\mathbb{R}^3}\left(2\mu \mathbb{D}(u) + \lambda \operatorname{div}u\mathbb{I}_3\right) : \mathbb{D}(\phi) \, dx\, dt + \int\limits_{0}^{T} w \cdot \ell_{\phi} \, dt, 
 \end{multline}
for any $\phi \in C_c^{\infty}((0,T)\times \Omega)$, with $\phi(t,y)=\ell_{\phi}(t) + \omega_{\phi}(t)\times (y-h(t))$ in a neighborhood of $\mc{B}(t)$;
for a.e. $t \in [0,T]$, the following energy inequality holds:
\begin{multline*}
\int\limits_{\Omega}\left( \frac{\rho(t,x)}{2}|u(t,x)|^2+\frac{a}{\gamma-1}\rho^{\gamma}(t,x) \right)dx + \int\limits_{0}^{t}\int\limits_{\Omega}\left(2\mu |D({u})|^2+\lambda|\operatorname{div}{u}|^2\right)\,dx\,dt\\\leq C\left(\int\limits_{\left\{{\rho}(0)>0\right\}}\left( \frac{1}{2}\frac{|q(x)|^2}{{\rho}(0,x)} + \frac{a}{\gamma-1}{\rho}^{\gamma}(0,x)\right)\,dx + \int\limits_{0}^{t} w\cdot \ell\ dt \right);
\end{multline*}
and
\begin{equation*}
\rho(0,\cdot)=\rho_0,\quad (\rho u)(0,\cdot)= q, \quad h(0)=h_0. 
\end{equation*}
\end{defin}
We now state a result on the weak compactness of the set of weak solutions to the problem \eqref{continuity}-\eqref{initial}
obtained in \cite[Theorem 9.1]{Feireisl-ARMA}. 
\begin{thm}\label{sequential stability}
Let $(\rho_n, u_n, h_n)$ be a sequence of weak solutions to \eqref{continuity}-\eqref{initial} on $(0,T)\times \Omega$ with the initial condition $(\rho_{0,n},u_{0,n},h_{0,n})$ and forcing term $w_n$ for each $n \geq 1$. Assume that $\{w_n\}$ is a sequence of bounded and measurable functions such that 
\begin{equation*}
w_n \rightarrow w \mbox{ weakly * in }L^{\infty}(0,T),
\end{equation*}
along with
\begin{align}
\rho_{0n}\rightarrow \rho_0 \quad \mbox{in}\quad L^{\gamma}(\mathbb{R}^3),\label{freisel:densityinitial}\\
\rho_{0n}u_{0n}=q_{n}\rightarrow q \quad \mbox{in}\quad L^{1}(\mathbb{R}^3),\label{freisel:momentuminitial}
\end{align}
where $\rho_0, q$ satisfy the following compatibility conditions
\begin{equation}\label{compatibility weak sol:1}
q=0 \mbox{ a.e. on the set }\{x\in \Omega \mid \rho_0=0\},\, \frac{|q|^2}{\rho_0}\in L^1(\Omega).
\end{equation} Moreover, let 
\begin{align}\label{compatibility weak sol:2}
\int\limits_{\left\{\rho_{0n}>0\right\}}\left( \frac{1}{2}\frac{|q_{n}|^2}{\rho_{0n}} + \frac{a}{\gamma-1}\rho_{n}^{\gamma}(0)\right)\,dx \rightarrow \int\limits_{\left\{\rho_{0}>0\right\}}\left( \frac{1}{2}\frac{|q|^2}{\rho_{0}} + \frac{a}{\gamma-1}\rho_{0}^{\gamma}\right)\,dx
\end{align} and
\begin{equation}\label{compatibility weak sol:3}
h_{0n} \rightarrow h_0.
\end{equation}
Then there is a subsequence such that 
\begin{align*}
\rho_{n} \rightarrow \rho \quad \mbox{in}\quad C([0,T];L^1(\mathbb{R}^3)),\\
u_{n} \rightarrow u \quad \mbox{ weakly in}\quad L^2(0,T; H^1_{0}(\Omega)),\\
h_n \rightarrow h \quad \mbox{ uniformly in}\quad  (0,T).
\end{align*}
where $(\rho,u,h)$ is a weak solution of the problem \eqref{continuity}-\eqref{initial} on $(0,T)\times \Omega$ with the initial conditions
$(\rho_0,q,h_0).$
\end{thm}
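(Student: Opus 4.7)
The plan is to follow the strategy pioneered by Feireisl for the sequential stability of weak solutions to a compressible fluid coupled to a rigid body. Step one is to derive uniform estimates from the energy inequality: the bound on $\int_0^t w_n\cdot\ell_n\,ds$ (via Cauchy--Schwarz and the uniform $L^\infty$-bound on $w_n$) together with the initial-energy convergence \eqref{compatibility weak sol:2} give uniform control of $\rho_n$ in $L^\infty(0,T;L^\gamma(\Omega))$, of $\sqrt{\rho_n}\,u_n$ in $L^\infty(0,T;L^2(\Omega))$, and of $u_n$ in $L^2(0,T;H^1_0(\Omega))$. Because $u_n=\ell_n+\omega_n\times(x-h_n)$ on the unit ball $\mc{B}_n(t)$, Korn- and Poincar\'e-type arguments transfer these bounds to $\ell_n,\omega_n\in L^2(0,T)$; since $h_n'=\ell_n$, Arzel\`a--Ascoli then yields $h_n\to h$ uniformly on $[0,T]$ along a subsequence.

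Step two is passage to the limit in the continuity and renormalized continuity equations. Weak-$\ast$ compactness provides $\rho_n\rightharpoonup \rho$ in $L^\infty(0,T;L^\gamma)$; a standard argument using that $\partial_t\rho_n=-\operatorname{div}(\rho_n u_n)$ is bounded in $W^{-1,p}$ upgrades this to $\rho_n\to \rho$ in $C([0,T];L^\gamma_{\mathrm{weak}})$, and, combined with \eqref{freisel:densityinitial}--\eqref{freisel:momentuminitial} and the compatibility \eqref{compatibility weak sol:1}, identifies the initial data in the limit. The weak convergence $u_n\rightharpoonup u$ in $L^2(0,T;H^1_0)$ comes directly from the energy bound.

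The central difficulty, and the step I expect to be the main obstacle, is the strong $L^1$-convergence of the density, needed to pass to the nonlinear pressure $a\rho_n^\gamma$ in \eqref{continuity weak}. I would deploy the Feireisl--Lions effective viscous flux identity: testing the momentum equation against $\psi\, \mc{A}[\rho_n^\gamma]$, where $\mc{A}$ is an inverse-divergence operator built from Riesz transforms, yields weak continuity of $a\rho_n^\gamma-(2\mu+\lambda)\operatorname{div} u_n$ multiplied by $\rho_n$, via a Div--Curl commutator bound. Combined with the renormalized continuity equation applied to $b(\rho)=\rho\log\rho$ and the convexity of that function, this gives $\rho_n\to \rho$ almost everywhere and then strongly in every $L^p$ with $p<\gamma$. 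The novelty compared to the fluid-alone case is that test functions must be chosen compatible with the moving rigid body; the uniform convergence $h_n\to h$ and a no-collision property (implicit in the setting on the interval considered) allow one to transplant a test function rigid near $\mc{B}(t)$ into one rigid near $\mc{B}_n(t)$ with controlled error.

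Step four is passage to the limit in the weak momentum formulation \eqref{continuity weak}. The transplantation procedure above also handles the rigidity constraint in the limit: uniform convergence of $h_n$ and trace compactness on the moving boundary ensure that $u=\ell+\omega\times(x-h)$ holds in $\mc{B}(t)$, and that $(\ell_n,\omega_n)\to(\ell,\omega)$ strongly in $L^2(0,T)$ after extraction. The forcing pairing $\int_0^T w_n\cdot\ell_{\phi,n}\,dt$ converges to $\int_0^T w\cdot\ell_\phi\,dt$ since $w_n\rightharpoonup^\ast w$ in $L^\infty$ and $\ell_{\phi,n}\to\ell_\phi$ strongly. Lower semi-continuity of the dissipation $\int(2\mu|\mathbb{D}(u)|^2+\lambda|\operatorname{div}u|^2)$ and of the convex potential $\rho^\gamma$, together with the strong convergence obtained above, yield the limit energy inequality and complete the identification of $(\rho,u,h)$ as a weak solution of \eqref{continuity}--\eqref{initial}.
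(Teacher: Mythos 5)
The paper does not prove this theorem at all: it is imported verbatim (with the added forcing term $w_n$) from \cite[Theorem 9.1]{Feireisl-ARMA}, and the authors simply cite that reference. So there is no ``paper proof'' to compare against; what you have written is a summary of the architecture of Feireisl's proof of the cited result. As such a summary it is essentially faithful: energy bounds plus Korn/Poincar\'e to control $(\ell_n,\omega_n)$ and get $h_n\to h$ uniformly, weak compactness and time-continuity of $\rho_n$ in $L^\gamma_{\rm weak}$, the effective viscous flux identity, renormalization, and the transplantation of rigid test functions using the uniform convergence of $h_n$ are exactly the ingredients of that proof.

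Two caveats if this were to stand as an actual proof rather than a pointer. First, your use of $b(\rho)=\rho\log\rho$ in the renormalized continuity equation is only legitimate when $\rho\in L^2_{\rm loc}$, i.e.\ essentially for $\gamma\geq 9/5$ in three dimensions (the Lions range); the paper assumes only $\gamma>3/2$, and covering that range is precisely Feireisl's contribution, via the truncations $T_k(\rho)$ and the boundedness of the oscillation defect measure, which then yields the renormalized equation for the limit density and the a.e.\ convergence. Without that device the argument fails for $3/2<\gamma<9/5$. Second, the effective viscous flux step needs to be localized: the test function $\psi\,\mc{A}[\rho_n^\gamma]$ is not rigid near $\mc{B}_n(t)$, so the identity is only obtained on compact subsets of the fluid domain, and one also needs the improved pressure integrability (Bogovskii-type estimates) up to, but away from, the moving boundary; the uniform convergence $h_n\to h$ and the strict distance to $\partial\Omega$ make this possible but it is a genuine step, not a formality. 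Likewise the strong $L^2$ convergence of $(\ell_n,\omega_n)$ requires equicontinuity in time extracted from the body's momentum balance (an Aubin--Lions argument), not just the energy bound. None of this is a wrong turn, but if you intend the sketch as a proof you must either supply these pieces or, as the paper does, cite \cite{Feireisl-ARMA} where they are carried out.
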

With the help of above result, we can now prove Theorem \ref{asymptotic behavior}. 
\begin{proof}[Proof of Theorem \ref{asymptotic behavior}]
From \eqref{1451}, there exist $h^* \in \Omega^0$ and $\{t_n\} \subset \mathbb{R}^*_{+}$ such that 
 \begin{equation*} 
t_n \rightarrow \infty,\quad \lim_{n\rightarrow \infty}h(t_n)=h^*.
\end{equation*}
Define
\begin{equation*}
\rho^* = \mathbbm{1}_{\widehat{\mathcal{F}}(h^*)}\overline{\rho} + \mathbbm{1}_{\widehat{\mc{B}}(h^*)}\rho_{\mc{B}}.
\end{equation*}
Writing
\begin{equation*}
\rho (t_n,\cdot) - \rho^* = [\rho(t_n)-\overline{\rho}]\mathbbm{1}_{\mathcal{F}(t_n)} + \overline{\rho}[\mathbbm{1}_{\mathcal{F}(t_n)}-\mathbbm{1}_{\widehat{\mathcal{F}}(h^*)}] + \rho_{\mc{B}}[\mathbbm{1}_{{\mc{B}}(t_n)} - \mathbbm{1}_{\widehat{\mc{B}}(h^*)}],
\end{equation*}
and using \eqref{solid body vel}, we deduce
\begin{align*}
\rho (t_n,\cdot) \xrightarrow{t_n \to \infty} \rho^* \quad \mbox{in}\quad L^{\gamma}(\mathbb{R}^3),\\
\rho(t_n,\cdot) u (t_n,\cdot) \xrightarrow{t_n \to \infty} 0 \quad \mbox{in}\quad L^1(\mathbb{R}^3),\\
\rho(t_n,\cdot) |u (t_n,\cdot)|^2 \xrightarrow{t_n \to \infty} 0 \quad \mbox{in}\quad L^1(\mathbb{R}^3).
\end{align*} 
We set 
\begin{equation*}
\rho_{0n}=\rho(t_n),\quad u_{0n}=u(t_n),\quad h_{0n}=h(t_n),
\end{equation*}
that satisfy \eqref{freisel:densityinitial}, \eqref{freisel:momentuminitial}, \eqref{compatibility weak sol:1}, \eqref{compatibility weak sol:2} 
and \eqref{compatibility weak sol:3} with $\{\rho_{0n}>0\}=\{\rho_0 > 0\} = \Omega$. We also define 
\begin{equation*}
\rho_{n}(t,x)=\rho(t+t_n,x),\quad u_{n}(t,x)=u(t+t_n,x),\quad h_{n}(t)=h(t+t_n),\quad \ell_n(t)=\ell(t+t_n),
\end{equation*}
that is a weak solution to \eqref{continuity}-\eqref{initial} in the sense of \cref{weaksol:def} (since it is a strong solution) with initial conditions $(\rho_{0n}, u_{0n}, h_{0n})$ and with 
\begin{equation*}
w_n(t) = k_p(t) (h_1-h_n(t)) - k_d\ell_n(t).
\end{equation*}
From \cref{global existence}, we have that 
\begin{equation*}
w_n \rightharpoonup \widehat{w} \mbox{ weakly * in }L^{\infty}(0,T).
\end{equation*}
Thus, we can apply \cref{sequential stability} and we deduce that up to a subsequence for $T> 0$:
\begin{align}\label{B}
\begin{split}
&\rho_{n} \rightarrow \widehat{\rho} \quad \mbox{in}\quad C([0,T];L^1(\mathbb{R}^3)),\\
&u_{n} \rightarrow \widehat{u} \quad \mbox{ weakly in}\quad L^2(0,T; H^1_{0}(\Omega)),\\
&h_n \rightarrow \widehat{h} \quad  \mbox{in}\quad  L^{\infty}(0,T),
\end{split}
\end{align}
with $(\widehat{\rho},\widehat{u},\widehat{h})$ is a weak solution of \eqref{continuity}-\eqref{initial} such that 
\begin{equation*}
\widehat{\rho}(0,\cdot)=\rho^*, \quad (\widehat{\rho}\widehat{u})(0,\cdot)=0,\quad \widehat{h}(0)=h^*,
\end{equation*}
and with 
\begin{equation*}
\widehat{w}(t) = k_p(t)(h_1-\widehat{h}(t)) - k_d\widehat{\ell}(t).
\end{equation*}
Moreover up to a subsequence,
\begin{equation*}
\int\limits_{0}^{T} \|\mathbb{D}(u_n(t,\cdot))\|_{L^2(\Omega)}^2\, dt = \int\limits_{t_n}^{t_n + T}\|\mathbb{D}(u(t,\cdot))\|_{L^2(\Omega)}^2\, dt\xrightarrow {n \to \infty} 0.
 \end{equation*}
 The above limit and \eqref{B} yield
 \begin{equation*}
 \mathbb{D}\widehat{u} =0 \mbox{ in }(0,T)\times \Omega.
 \end{equation*}
 Thus, we deduce that $\widehat{u}=0$ in $(0,T)\times \Omega$. In particular, we have $\widehat{h}'(t)=0$, $\forall\, t\in(0,T)$. This gives, 
 \begin{equation*}
 \widehat{h}=h^* \mbox{ in }(0,T).
 \end{equation*}
 Consequently, \eqref{continuity weak} gives
 \begin{equation*}
 \int\limits_0^T\int\limits_{\mathbb{R}^3} a (\widehat{\rho})^{\gamma} \operatorname{div} \phi\ dx\ dt = \int\limits_0^T k_p (h_1-h^*)\cdot \ell_{\phi}\ dt,
 \end{equation*}
 for all $\phi\in C_c^{\infty}((0,T)\times \Omega),$ with 
 $\phi(t,y)=\ell_{\phi}(t) + \omega_{\phi}(t)\times (y-h(t))$ in a neighborhood of $\mc{B}(t)$.
 Then we take 
 \begin{equation*}
\operatorname{div}\phi =0,\ \phi(t,\cdot)=(h_1-h^*)\zeta(t) \mbox{ in }\mc{B}(t), \mbox{ with }\zeta \in C_c^{\infty}((0,T)),
 \end{equation*}
 so that
 \begin{equation*}
 \int\limits_0^T |h_1-h^*|^2 k_p(t) \zeta(t)\ dt=0, \quad \forall \ \zeta \in C^{\infty}_c((0,T)).
 \end{equation*}
 Since, $k_p \neq 0$, $h^*=h_1$.
\end{proof}

\medskip
\bibliographystyle{plain}
\bibliography{references}
\end{document}